\newcolumntype{L}[1]{>{\raggedright\let\newline\\\arraybackslash\hspace{0pt}}m{#1}}
\newcolumntype{C}[1]{>{\centering\let\newline\\\arraybackslash\hspace{0pt}}m{#1}}
\newcolumntype{R}[1]{>{\raggedleft\let\newline\\\arraybackslash\hspace{0pt}}m{#1}}
\theoremstyle{plain}
\newtheorem{theorem}{\protect Theorem}[section]
\newtheorem{prop}[theorem]{\protect Proposition}
\newtheorem{definition}[theorem]{\protect Definition}
\newtheorem{lemma}[theorem]{\protect Lemma}
\newtheorem{remark}[theorem]{\protect Remark}
\newtheorem{ass}{\protect Assumption}
\newtheorem{corollary}[theorem]{\protect Corollary}
\def\d{\mathrm{d}}
\def\ie{\mathrm{i.e.}}
\def\as{\mathrm{a.s.}}
\def\RV{\mathrm{r.v.}}
\newcommand{\R}{\mathbb{R}}
\newcommand{\E}{\mathbb{E}}
\newcommand{\T}{\top}
\newcommand{\C}{\mathcal{C}}
\newcommand{\D}{\mathcal{D}}
\newcommand{\F}{\mathcal{F}}
\newcommand{\Fb}{\mathbb{F}}
\newcommand{\Pb}{\mathbb{P}}
\newcommand{\Pc}{\mathcal{P}}
\newcommand{\tr}{\mathrm{tr}}
\newcommand{\Law}{\mathcal{L}}
\newcommand{\Gr}{\mathrm{Gr}}
\newcommand{\Mc}{\mathcal{M}}
\newcommand{\U}{\mathscr{U}}
\newcommand{\Lb}{\mathbb{L}}
\newcommand{\assref}[1]{\hyperref[#1]{Assumption \ref*{#1}}}
\newcommand{\thmref}[1]{\hyperref[#1]{Theorem \ref*{#1}}}
\newcommand{\propref}[1]{\hyperref[#1]{Proposition \ref*{#1}}}
\newcommand{\remref}[1]{\hyperref[#1]{Remark \ref*{#1}}}
\newcommand{\lemref}[1]{\hyperref[#1]{Lemma \ref*{#1}}}
\newcommand{\defref}[1]{\hyperref[#1]{Definition \ref*{#1}}}
\newcommand{\corref}[1]{\hyperref[#1]{Corollary \ref*{#1}}}
\newcommand{\equref}[1]{\hyperref[#1]{(\ref*{#1})}}
\newcommand{\exaref}[1]{\hyperref[#1]{Example \ref*{#1}}}
\title{Mean Field Control with Poissonian Common Noise:\\
	A Pathwise Compactification Approach}
\author{Lijun Bo \thanks{Email: lijunbo@ustc.edu.cn, School of Mathematics and Statistics, Xidian University, Xi'an, 710126, China.}
	\and
	Jingfei Wang \thanks{Email:wjf2104296@mail.ustc.edu.cn, School of Mathematical Sciences, University of Science and Technology of China, Hefei, 230026, China.}
	\and
	Xiaoli Wei \thanks{Email: xiaoli.wei@hit.edu.cn, Institute for Advance Study in Mathematics, Harbin Institute of Technology, Harbin, China.}
	\and
	Xiang Yu \thanks{Email: xiang.yu@polyu.edu.hk, Department of Applied Mathematics, The Hong Kong Polytechnic University, Kowloon, Hong Kong.}
}
\date{\vspace{-0.8cm}}
\begin{document}
	\maketitle

\begin{abstract}
This paper contributes to the compactification approach to study mean-field control problems with Poissonian common noise. To overcome the lack of compactness and continuity issues caused by common noise, we exploit the point process representation of the Poisson random measure with finite intensity and propose a pathwise formulation in a two-step procedure by freezing a sample path of the common noise. In the first step, we establish the existence of optimal relaxed controls in the pathwise formulation as if common noise is absent, but with finite deterministic jumping times. The second step plays the key role in our approach, which is to aggregate the optimal solutions in the pathwise formulation over all sample paths of common noise and show that it yields an optimal solution in the original model. To this end, with the help of concatenation techniques, we first develop a pathwise superposition principle in the model with deterministic jumping times, drawing a relationship between the pathwise relaxed control problem and the pathwise measure-valued control problem. We then further bridge the equivalence among different problem formulations and verify that the constructed solution under aggregation is indeed optimal in the original problem. 

\vspace{0.1in}
\noindent{\bf Mathematics Subject Classification:} 49N80, 60H30, 93E20, 60G07    
    
\vspace{0.1in}
\noindent\textbf{Keywords}: Mean field control, Poissonian common noise, pathwise formulation, compactification approach, pathwise superposition principle
\end{abstract}

	\section{Introduction}
	
	Mean-field control (MFC) features the cooperative interactions when all agents jointly optimize the social optimum in the mean-field regime, which is closely related to mean-field games (MFG) initially introduced by Larsy and Lions \cite{LL2007} and Huang et al.  \cite{Huangetal2006}. Both types of mean field problems have gained remarkable theoretical advancements and vast applications during the past decades. To model more realistic scenarios where external random factors affect all agents simultaneously in the system, the incorporation of common noise in mean field models has caught considerable attention and spurred various recent methodological developments to better understand the dynamics and strategic interactions influenced by common noise.

	Most existing studies on mean field models focus on the Brownian common noise. For MFC problems with Brownian common noise, to name a few, the dynamic programming principle has been established in Pham and Wei \cite{pham2017} under closed-loop controls, in Djete et al. \cite{Djete2022} with a non-Markovian framework and open-loop controls, and in Denkert et al. \cite{Den24} by utilizing the randomization method; the viscosity solution and comparison principle of the HJB equation has been studied in Zhou et al. \cite{Zhou2024}; the limit theory and equivalence between different formulations has been investigated in Djete et al. \cite{Djete}; the time-inconsistent MFC under non-exponential discount and the characterization of the closed-loop time-consistent equilibrium have been discussed in  \cite{LiangZY}. For MFG problems with Brownian common noise, the strong mean field equilibrium (MFE) adapted to the common noise filtration has been established by analyzing the master equation in \cite{Ahuja}, \cite{Card} and \cite{Mou} under some regularity and monotonicity conditions. 

	The probabilistic compactification approach has been another powerful tool to establish the existence of Markovian MFEs in a general mean-field setup since the pioneer study in Lacker \cite{Lacker3}. The idea of compactification originates from the relaxed control formulation in Karoui et al. \cite{Karoui} and Haussmann and Lepeltier \cite{Haussmann} for single agent's control problems. The compactification arguments tackle the law of the controlled system directly and allow for non-unique optimal controls by utilizing a set-valued fixed-point theorem (such as Kakutani’s fixed-point theorem). In MFC and MFG problems without common noise, the compactification method has been generalized and employed in different settings such as MFG with controlled jumps in Benazzoli et al. \cite{Benazzoli}; MFG with absorption in Campi and Fisher \cite{Campi}; MFG with finite states in Cecchin and Fisher \cite{Cecchin}; MFG with singular controls in Fu and Horst \cite{Fu}; MFC with singular control and mixed state-control-law constraints in \cite{BWY1}; MFG of controls with reflected state dynamics in \cite{BWY}. Comparing with these studies without common noise, the consideration of common noise brings significantly more complexities as the limiting environment is described by a stochastic flow of conditional distribution of the population given the common noise. As a key step in the compactification approach, one has to carry out the fixed point argument to the space of measure-valued processes to conclude the consistency condition of MFE, which is however lack of compactness. Another major challenge in the compactification method is the lack of continuity of the conditional law with respect to the joint law when the conditional probability space is not finite. Specifically, the convergence of joint laws $\Law(X_n, Y) \to \Law(X, Y)$ does not imply the convergence of conditional laws $\Law(X_n|Y) \to \Law(X|Y)$, $\Law(Y)$-a.s. when $Y$ takes infinite values. The same technical issues from the lack of compactness and continuity also arise in applying the compactification approach in MFC problems with common noise. To circumvent these technical obstacles, a discretization procedure was initially proposed in Carmona et al. \cite{Carmona1} for MFG with drift control by discretizing the Brownian common noise in space and time, and then taking a suitable limiting argument. As a consequence, the obtained MFE are called the weak MFE as they are not necessarily adapted to the common noise filtration. Later, the same discretization technique of common noise and different levels of generalizations in compactification arguments have been developed in various context such as Barrasso and Touzi \cite{BT22} for MFG with both drift and volatility control, Tangpi and Wang \cite{Tangpi24} for MFG of controls and random entry time, and Burzoni and Campi \cite{Bcampi} for MFG with absorption, all compromised to the existence of weak MFE as in \cite{Carmona1}. In a special and restrictive setting when the interaction incurs via the conditional law given the current value of common noise, Tangpi and Wang \cite{Tangpi25} recently established the existence of strong MFEs using a compactness criterion for Malliavin-differentiable random variables to processes without the step of discretization.

 The goal of the present paper is to contribute new techniques to the compactification approach for MFC when the common noise is depicted by some Poisson random measures. The common Poisson random measures are widely used to capture the impact of unexpected common shock events that affect all participants, such as financial crises, policy interventions, pandemics and natural catastrophes. For instance, Lindskog and McNeil \cite{Lindskog} used Poisson processes to model common windstorms that cause insurance losses across multiple countries. Similarly, Duffie and Garleanu \cite{Duffin} explored the default risk of $N$ participants in a collateral pool, where each obligor's default intensity comprises an idiosyncratic component and a common state process driven by a pure-jump process shared among all obligors. Moreover, Poisson common noise can naturally be applied to systemic risk (c.f. \cite{Fouque}), where the reserves of all interbanks simultaneously under abrupt jumps in response to common shocks, such as major policy announcements. Motivated by these abrupt and discretely occurring global shocks to the entire system, there are some emerging studies of MFG and MFC in the presence Poissonian common noise. For instance, Hern\'andez-Hern\'andez and Ricalde-Guerrero \cite{HHRG2023,HHRG2024} investigated the propagation of chaos and stochastic maximum principle for MFG with Poissonian common noise. Bo et al. \cite{BWWY} studied the stochastic maximum principle and the HJB equation under open-loop controls for extended MFC with Poissonian common noise. However, it remains an interesting open problem that whether these problems with Poissonian common noise can be addressed by compactification arguments. In response, the present paper aims to propose new techniques in employing the compactification approach without the discretization procedure but by taking advantage of the point process representation of Poisson random measure with finite intensity. Our main result stands out in the literature using the compactification approach as the desired adaptivity with respect to common noise filtration can be retained.
 
 More precisely, in MFC under the assumption that Poissonian common noise has finite intensity, we introduce an auxiliary probabilistic setup by fixing an arbitrary sample path in the canonical space  $\omega^1\in\Omega^1$  to support the common noise. This is possible thanks to the assumption of finite intensity of the Poisson random measure such that the pathwise construction of the stochastic integral with respect to the Poisson measure is well defined and each sample path only exhibits finitely many jumps over the finite time horizon; see \remref{MFC_extension} for more details. By doing so, we can exercise our pathwise formulation approach in two main steps. In Step-1, we first consider the pathwise MFC formulation without common noise as an auxiliary martingale problem with associated admissible pathwise relaxed controls (see \defref{relaxed_control_pathwise} and the problem \eqref{eq:RMopt}) when the jump terms become deterministic jumps. The rationale behind the pathwise formulation is the conjectured equivalence in \eqref{value_equivalent} between the original relaxed control problem with Poissonian common noise and the aggregation of pathwise relaxed control problems over all sample paths. In this step, we can perform compactification (\propref{existence_pathwise}) arguments in the auxiliary model in the Skorokhod topology as if common noise is absent but
 with deterministic jumping times, which produces an optimal control $P_*^{\omega^1}$ as a measurable mapping from $\Omega^1$ to the optimal pathwise relaxed control set. 
 
In Step-2, the task is to verify the key conjecture of equivalence in \eqref{value_equivalent}. To this end, we utilize the Fokker-Planck equation to heuristically transform the strict control problem with a fixed sample path $\omega^1\in\Omega^1$ into a pathwise measure-valued control problem. By means of concatenation techniques over a sequence of deterministic jumping times, we establish a pathwise superposition principle (\thmref{thm:equivalence}-{\bf (ii)}), confirming the relationship between the pathwise measure-valued control problem and the pathwise relaxed control problem when the sample path of common noise is fixed. Based on some standard approximation arguments, we can obtain the equivalence between the strict control problem and the relaxed control problem in the original model with Poissonian common noise  (\thmref{thm:equivalence}-{\bf (i)}). We finally prove the desired equivalence  \eqref{value_equivalent} in \thmref{thm:equivalence}-{\bf (iii)} via two sided inequalities: On one hand, Lemma 6.14 in \cite{BWY} implies that the value function of the original problem with common noise is less than that of the pathwise formulation; on the other hand, the reverse inequality follows by considering the admissible control $\bar{P}^*(\mathrm{d}\omega,\mathrm{d}\omega^1) = P^{\omega^1}_*(\mathrm{d}\omega)P^1(\mathrm{d}\omega^1)$ together with the established superposition principle in the pathwise formulation in \thmref{thm:equivalence}-{\bf (ii)}.  Consequently, the equivalence in \eqref{value_equivalent} can be concluded such that $\bar{P}^*(\mathrm{d}\omega,\mathrm{d}\omega^1) = P^{\omega^1}_*(\mathrm{d}\omega)P^1(\mathrm{d}\omega^1)$ constitutes an optimal relaxed control in the original problem (\thmref{thm:equivalence}-{\bf (iii)}).

The rest of the paper is organized as follows. Section \ref{sec:prob} introduces the model with Poissonian common noise and the relaxed control problem formulation of the MFC. Section \ref{sec:Compactification} establishes the existence of pathwise optimal controls using the compactification arguments in the pathwise formulation as if the common noise is absent. Section \ref{sec:Equivalence} develops the equivalence between the original problem with Poissonian common noise and the pathwise formulation with the aid of the auxiliary measure valued control problem, thereby confirming the existence of optimal relaxed controls in the original model.  Section \ref{auxiliary} collects some auxiliary results and proofs.

\vspace{0.1in}
\noindent{\bf Notations.}\quad We list below some notations that will be used frequently throughout the paper:
\vspace{-0.35in}
\begin{center}
\begin{longtable}{l l}
$|\cdot|$ & Euclidean norm on $\R^n$\\
$L^p((A,\mathscr{B}(A),\lambda_A);E)$ & Set of $L^p$-integrable $E$-valued mapping defined on $(A,\mathscr{B}(A))$\\ 
& we write $L^p(A;E)$ for short\\
$\nabla_i \phi$ & Partial derivative of $\phi$ w.r.t. the $i$-th component of argument\\ 
$\Law^P(\kappa)$ ($\E^P[\kappa]$) & Law (Expectation) of r.v. $\kappa$ under probability measure $P$\\
$\Pc_p(E)$ & Set of probability measures on $E$ with finite $p$-order moments\\
$M_p(\mu)$ & $\left(\int_{\R^n}|e|^p\mu(\d e)\right)^{\frac1p}$ for $\mu\in\Pc_p(\R^n)$\\  
$\mathcal{W}_{p,E}$ & The $p$-Wasserstein metric on $\Pc_p(E)$\\
$\Mc(E)$ & Set of signed Randon measures on $E$\\
$\Mc_c(E)$ & Set of simple finite counting measures on $E$\\
$C([0,T];E)$ & Set of $E$-valued continuous functions on $[0,T]$\\
$D([0,T]; E)$ & Set of $E$-valued c\`{a}dl\`{a}g functions on $[0,T]$\\
$\langle \phi, \mu\rangle$ & $\int_{\R^n} \phi(x) \mu(dx)$ for $\mu \in \Pc_2(\R^n)$ and integrable function $\phi:\R^n \to \R$\\

{$\mathsf{R}$ ($\mathsf{R}^{\rm s}$)} & {Set of admissible relaxed (strict) controls (see Def. \ref{relaxed_control})}\\
{$\mathsf{R}(\omega^1)$ ($\mathsf{R}^{\rm s}(\omega^1)$)} & {Set of pathwise admissible relaxed (strict) controls (see Def. \ref{relaxed_control_pathwise})}\\
{$\mathsf{R}^{\rm opt}$ ($\mathsf{R}^{\rm opt}(\omega^1)$)} & {Set of optimal (pathwise) admissible relaxed controls}

\end{longtable}
\end{center}

\section{Problem Formulation}\label{sec:prob}
We first introduce a standard strict control formulation in the strong sense. Let $T>0$ be a finite horizon and $(\Omega,\F,\Fb,\Pb)$ be a filtered probability space with the filtration $\Fb=(\F_t)_{t\in[0,T]}$ satisfying the usual conditions. For $n,l\in\mathbb{N}$ and $p>2$, let $W=(W_t)_{t\in[0,T]}$ be a standard $n$-dimensional ($\Pb,\Fb$)-Brownian motion and $N(\d t,\d z)$ be a $(\Pb,\Fb)$-Poisson random measure on a measurable space {$([0,T]\times Z,{\cal B}([0,T])\otimes\mathscr{Z})$, whose intensity measure is given by $\nu(\d z)dt$ with $(Z,\mathscr{Z})$ being a measurable space satisfying $\nu(Z)<\infty$}. The control space $U\subset\R^l$ is assumed to be compact  and $\U[0,T]$ denotes the set of admissible controls which are $\Fb$-progressively measurable processes. We also set $\Fb^N=(\F_t^N)_{t\in [0,T]}$ with $\F_t^N:=\sigma(N((0,s]\times A);~s\leq t,A\in\mathscr{Z})$. Assume that coefficients $(b,\sigma,f):[0,T]\times\R^n\times\Pc_2(\R^n)\times U\to\R^n\times\R^{n\times n}\times\R$ and $\gamma:[0,T]\times\R^n\times\Pc_2(\R^n)\times Z\to\R^n$ are Borel measurable. The initial data $\kappa\in L^p((\Omega,\F_0,\Pb),\R^n)$ is independent of $(W,N)$ with law $\lambda\in\Pc_p(\R^n)$, i.e., $\lambda={\cal L}^{\mathbb{P}}(\kappa)$. For an admissible control $\alpha=(\alpha_t)_{t\in [0,T]}\in\U[0,T]$, let us consider the controlled conditional McKean-Vlasov dynamics:
\begin{align}\label{double_reflection}
\d X_t^{\alpha}=b(t,X_t^{\alpha},\mu_t,\alpha_t)\d t+\sigma(t,X_t^{\alpha},\mu_t,\alpha_t)\d W_t+\int_Z\gamma(t,X_{t-}^{\alpha},\mu_{t-},z)N(\d t,\d z),~ X_0^{\alpha}=\kappa,
\end{align}
where $\mu_t:=\Law^{\Pb}(X_t^{\alpha}|\F_t^N)$ is the conditional distribution of $X_t^{\alpha}$ at time $t\in(0,T]$ and the Poisson random measure plays the role of common noise. 
    
Due to the fact that $N$ is a $(\Pb,\Fb)$-Poisson random measure, one can easily verify that, for any $t\in [0,T]$,
\begin{align}\label{compatible}
\E^{\Pb}\left[\boldsymbol{1}_D|\F_t^N\right]=\E^{\Pb}\left[\boldsymbol{1}_D|\F_T^N\right],~\Pb\text{-}\as,~~\forall  D\in\F_t\vee\F_T^W
\end{align}
with $\F_T^W:=\sigma(W_t;~0\leq t\leq T)$. In particular, it holds that $\Law^{\Pb}(X_t^{\alpha}|\F_t^N)=\Law^{\Pb}(X_t^{\alpha}|\F_T^N)$ for $t\in [0,T]$. The equality \equref{compatible} is often referred as the compatibility condition in the mean field theory with common noise (c.f. Eq.~(2.5) in Djete et al. \cite{Djete} for MFC, and Definition 1.6 in Carmona and Delarue \cite{Carmona} for MFG).
	
	The goal of the social planner in the MFC problem is to minimize the following cost functional over $\alpha\in\U[0,T]$,
	\begin{align}\label{cost_func}
		J(\alpha):=\E^{\Pb}\left[\int_0^Tf(t,X_t^{\alpha},\mu_t,\alpha_t)\d t{+g(X_T^{\alpha},\mu_T)}\right].
	\end{align}
	
	\begin{definition}\label{optimal_control}
		We call	$\alpha^*\in\U[0,T]$ an  optimal (strict) control (in the strong sense) if it holds that $J(\alpha^*)=\inf_{\alpha\in\U[0,T]}J(\alpha)$.
\end{definition}
We impose the following assumptions on model coefficients throughout the paper.
\begin{ass}\label{ass1} 
\begin{itemize}
\item[{\rm(A1)}]  The coefficients $(b,\sigma,f):[0,T]\times\R^n\times\Pc_2(\R^n)\times U\to \R^n\times\R^{n\times n}\times\R$, {$g:\R^n\times\Pc_2(\R^n)\to\R$} and $\gamma:[0,T]\times\R^n\times\Pc_2(\R^n)\times Z\to\R^n$ are jointly continuous and $(b,\sigma,f)$ are all uniformly continuous in $u\in U$ with respect to $(t,x,\mu)\in[0,T]\times\R^n\times\Pc_2(\R^n)$.
		
\item[{\rm(A2)}] The coefficients $(b,\sigma,\gamma)$ are uniformly Lipschitz continuous in $(x,\mu)\in\R^n\times\Pc_2(\R^n)$ in the sense that, there exists a constant  $M>0$ independent of $(t,u,z)\in [0,T]\times U\times Z$ such that, for all $(x,\mu),(x',\mu')\in\R^n\times\Pc_2(\R^n)$,
\begin{align*}
&\left|(b,\sigma)(t,x',\mu',u)-(b,\sigma)(t,x,\mu,u)\right|+\left|\gamma(t,x',\mu',z)-\gamma(t,x,{\color{red}\mu},z)\right|\nonumber\\
&\qquad\leq M(|x-x'|+\mathcal{W}_{2,\R^n}(\mu,\mu')).
\end{align*}
\item[\rm(A3)] There exists a constant $M>0$ independent of $(t,u)\in[0,T]\times U$ such that, for all $(x,\mu),(x',\mu')\in\R^n\times\Pc_2(\R^n)$, 
\begin{align*}
{|g(x',\mu')-g(x,\mu)|}+|f(t,x',\mu',u)-f(t,x,\mu,u)|\leq M\left(1+|x-x'|^2+\mathcal{W}_{2,\R^n}(\mu,\mu')^2\right).
\end{align*}

\item [{\rm (A4)}] There exists a constant $M>0$ independent of $(t,z)\in [0,T]\times Z$ such that $|\gamma(t,x,\mu,z)|\leq M(1+|x|+M_2(\mu))$ for all $(x,\mu)\in\R^n\times\Pc_2(\R^n)$.
	\end{itemize}
\end{ass}

As a preparation for different problem formulations, let us also introduce some basic spaces: 
\begin{itemize}

\item The space {$\D:=D([0,T];\R^n)$} is endowed with the Skorokhod metric $d_{\D}$\ and the Borel $\sigma$-algebra is denoted by $\F^X$, and $\F_t^X$ stands for the Borel $\sigma$-algebra up to time $t$. 

\item 	The space $\mathcal{Q}$ of relaxed controls is defined as the set of measures $q$ in $[0,T]\times U$ with the first marginal equal to the Lebesgue measure and $\int_{[0,T]\times U}|u|^pq(\d t,\d u)<\infty$. We endow the space $\mathcal{Q}$ with the $2$-Wasserstein metric on $\Pc_2([0,T]\times U)$ given by $d_{\mathcal{Q}}(q^1,q^2)=\mathcal{W}_{2,[0,T]\times U}(\frac{q^1}{T},\frac{q^2}{T})$, where the metric on $[0,T]\times U$ is given by $((t_1,u_1),(t_2,u_2))\mapsto|t_2-t_1|+|u_2-u_1|$. Note that, each $q\in\mathcal{Q}$ can be identified with a measurable function $[0,T]\in t\mapsto q_t\in\Pc_2(U)$, defined uniquely up to $\as$ by $q(\d t,\d u)=q_t(\d u)\d t$. In the sequel, we will always refer to the measurable mapping $q=(q_t)_{t\in [0,T]}$ to a relaxed control in $\mathcal{Q}$. Let $\F^{\mathcal{Q}}$ be the Borel $\sigma$-algebra of $\mathcal{Q}$ and $\F_t^{\mathcal{Q}}$ be the $\sigma$-algebra generated by the maps $q\mapsto q([0,s]\times V)$ with $s\in [0,t]$ and Borel measurable $V\subset U$. Because $U$ is compact and Polish, $\mathcal{Q}$ as a closed subset of $\Pc_2([0,T]\times U)$ is also compact and Polish.

\item The space {$\C:=C([0,T];\R^n)$} is endowed with the supremum norm $\|\cdot\|_{\infty}$ and the Borel $\sigma$-algebra is denoted by $\F^W$, and $\F_t^W$ stands for the Borel $\sigma$-algebra up to time $t$.
    
\item Denote by $\Pi_{Z}$ the collection of point functions $p:D_{p}\subset [0,T]\to Z$ with $D_{p}$ being a finite set (see Section 1.9 in \cite{Ikeda} for a detailed definition of point functions). As stated therein, each point function $p \in \Pi_Z$ induces a counting measure $N_p(\d t, \d z)$ on $[0,T] \times Z$ via the injective mapping $\mathscr{N}:\Pi_Z\to \Mc_c([0,T]\times Z)$, $p\to N_p(\d t,\d z)$, where $N_p([0,t]\times A)=\#\{s\in D_p;~s\leq t,~p(s)\in A\}$ for $t\in[0,T]$ and $A\in\mathscr{Z}$.
	
\item The space $\Omega^1:=\mathscr{N}(\Pi_Z)$, i.e., the image of $\Pi_Z$ under the injective mapping $\mathscr{N}$. {It is identified as a subset of $L^{\infty}([0,T]\times Z;\R)$ and is endowed with the corresponding weak* topology, $\ie$, $\omega^{1,n}\to\omega^1$ if and only if for any $\phi\in L^{\infty}([0,T]\times Z)$, 
\begin{align*}
\int_0^T\int_Z \phi(t,z)\omega^{1,n}(\d t,\d z)\to \int_0^T\int_Z\phi(t,z)\omega^1(\d t,\d z),~~n\to\infty.
\end{align*}}Denote by $\F^1$ the Borel $\sigma$-algebra on $\Omega^1$. For any $\omega^1\in\Omega^1$, we set $p^{\omega^1}=\mathscr{N}^{-1}(\omega^1)$. Define the filtration $\Fb^1 = (\F_t^1)_{t \in [0,T]}$ by ${\F_t^1}:=\sigma(N((0,t]\times A);~t\in [0,T],A\in\mathscr{Z})$ for $t\in[0,T]$, and $N(\omega^1)=\omega^1$ for all $\omega^1\in\Omega^1$, i.e., the identity mapping on $\Omega^1$. Moreover, let $P^1$ be the probability measure on $(\Omega^1,\F^1)$ under which $N$ is an $\Fb^1$-Poisson random measure with intensity $\nu(\d z)dt$.  
\end{itemize}

Define the canonical spaces $\Omega=\D\times\mathcal{Q}\times\C$ and $\bar{\Omega}=\Omega\times\Omega^1$. Endow them with the respective (product) $\sigma$-algebra $\F=\F^{X}\otimes\F^{\mathcal{Q}}\otimes\F^W$ and $\bar{\F}=\F\otimes\F^1$. The corresponding product filtrations are given by $\F_t=\F_t^X\otimes\F_t^{\mathcal{Q}}\otimes\F_t^W$ and $\bar{\F}_t=\F_t\otimes\F_t^1$ for $t\in[0,T]$.  In particular, $\Omega$ is Polish under the metric defined by $d_{\Omega}(\omega_1,\omega_2):=d_{\D}(\boldsymbol{x}_1,\boldsymbol{x}_2)+d_{\mathcal{Q}}(q^1,q^2)+\|\boldsymbol{w}_1-\boldsymbol{w}_2\|_{\infty}$ for $\omega_i=(\boldsymbol{x}_i,q^i,\boldsymbol{w}_i)\in\Omega$ with $i=1,2$. Moreover, we also introduce the coordinate mappings $(X,\Lambda,W)=(X_t,\Lambda_t,W_t)_{t\in [0,T]}$ and $(\bar X,\bar\Lambda,\bar W,\bar N)=((\bar{X}_t)_{t\in [0,T]},(\bar{\Lambda}_t)_{t\in [0,T]},(\bar W_t)_{t\in [0,T]},\bar{N}(\d t,\d z))$ as, for $\omega=(\boldsymbol{x},q,\boldsymbol{w})\in\Omega$ and $\bar{\omega}=(\boldsymbol{x},q,\boldsymbol{w},\omega^1)\in\bar\Omega$, 
\begin{align}\label{eq:coordinatemappings}
\bar{X}_t(\bar{\omega})= X_t(\omega)=\boldsymbol{x}(t),~&\bar{\Lambda}_t(\bar{\omega})=\Lambda_t(\omega)=q_t,~\bar{W}_t(\bar\omega)=W_t(\omega)=\boldsymbol{w}(t),\nonumber\\
&\bar{N}(\bar{\omega})(\d t,\d z)=\omega^1(\d t,\d z).   
\end{align}
For simplicity, denote by $\F_t^X$, $\F_t^{\mathcal{Q}}$, $\F_t^W$ and $\F_t^1$ for $t\in[0,T]$ the natural extensions of these filtrations to $\Omega$ and $\bar{\Omega}$. In the sequel, when talking about the filtrations $\F_t^X$, $\F_t^{\mathcal{Q}}$, $\F_t^W$ and  $\F_t^1$ for $t\in[0,T]$, there should be no confusion of which space the filtrations are defined on.

We next give the definition of admissible relaxed control rules in the model with Poissonian common noise.
\begin{definition}[Relaxed Control in Original Problem]\label{relaxed_control}
We call a probability measure $\bar{P}\in\Pc_2(\bar\Omega)$ on $(\bar\Omega,\bar\F)$ an  admissible relaxed control rule (denoted by $\bar P\in \mathsf{R}$) if it holds that {\rm(i)}  $\bar P\circ \bar{X}_0^{-1}=\lambda$, $\bar P(\bar W_0=0)=1$ and $\bar{X}_0$ is independent of $(\bar{W},\bar{N})$ under $\bar{P}$;  {\rm(ii)} the restriction of $\bar P$ to $\Omega^1$ $\bar P|_{\Omega^1}$ agrees with the law of $N$ under $\Pb$ on $(\Omega^1,\F^1)$, i.e., $\bar P|_{\Omega^1}=\Pb\circ \bar{N}^{-1}:=P^1$; {\rm (iii)} there exists an $\mathbb{F}^1$-adapted c\`{a}dl\`{a}g $\Pc_2(\R^n)$-valued process $\bar{\boldsymbol{\mu}}=(\bar\mu_t)_{t\in [0,T]}$ such that $\bar{P}(\bar{\mu}_t=\Law^{\bar P}(\bar{X}_t|\F_t^1),\forall t\in [0,T])=1$;
{\rm(iv)} {for any test function $\phi\in C^2_b(\R^n\times\R^n)$}, the process
\begin{align*}
{\tt M}^{\bar P}\phi(t):&=\phi(\bar{X}_t,\bar{W}_t)-\int_0^t\int_U\bar{\mathbb{L}}\phi(s,\bar{X}_s,\bar{W}_s,\bar\mu_s,u)\bar\Lambda_s(\d u)\d s\\
&\quad-\int_0^t\int_Z\left(\phi(\bar{X}_{s-}+\gamma(s,\bar{X}_{s-},\bar\mu_{s-},z),\bar W_s)-\phi(\bar{X}_{s-},\bar{W}_s)\right)\bar{N}(\d s,\d z),\quad t\in[0,T]
\end{align*}
is a $(\bar P,\bar \Fb)$-martingale. Here, the infinitesimal generator acting on $\phi\in C_b^2(\R^n\times\R^n)$ is defined by, for $(t,x,\mu,u)\in[0,T]\times\R^n\times\Pc_2(\R^n)\times\R^l$, 
\begin{align*}
\bar{\mathbb{L}}\phi(t,x,w,\mu,u):=\bar b(t,x,\mu,u)^{\T}\nabla\phi(x,w)+\frac12\tr\left(\bar\sigma\bar\sigma^{\T}(t,x,\mu,u)\nabla^2\phi(x,w)\right),
\end{align*}
where the coefficients
\begin{align*}
\bar b(t,x,\mu,u)=\begin{pmatrix}
b(t,x,\mu,u)\\
\boldsymbol{0}_n
\end{pmatrix},\quad\bar\sigma(t,x,\mu,u)=\begin{pmatrix}
\sigma(t,x,\mu,u)\\
I_{n}
\end{pmatrix}
\end{align*}
with $\boldsymbol{0}_n$ and $I_n$ being the zero vector in $\R^n$ and the identity matrix in $\R^{n\times n}$, respectively. Furthermore, if there exists an $\bar{\mathbb{F}}$-progressively measurable $U$-valued process $\bar\alpha=(\bar\alpha_t)_{t\in [0,T]}$ on $\bar\Omega$ such that $\bar P(\bar\Lambda_t(\d u)\d t=\delta_{\bar\alpha_t}(\d u)\d t)=1$, we say that $\bar P$ corresponds to a strict control $\bar\alpha$ or we call it a strict control rule. The set of all strict control rules is denoted by $\mathsf{R}^{\rm s}$.
\end{definition}
We have the following martingale measure characterization and moment estimate for admissible relaxed controls, whose proof is standard and omitted.
\begin{lemma}\label{moment_p}
$\bar{P}\in\mathsf{R}$ iff there exists a filtered probability space $(\Omega',\F',\Fb'=(\F'_t)_{t\in[0,T]},P')$ supporting a $\Pc(U)$-valued $\Fb'$-progressively measurable process $\bar\Lambda=(\bar\Lambda_t)_{t\in [0,T]}$, an $\R^n$-valued $\Fb'$-adapted process $\bar{X}^{\bar\Lambda}=(\bar{X}_t^{\bar\Lambda})_{t\in[0,T]}$, an $n$-dimensional standard $(P',\Fb')$-Brownian motion $\bar W=(\bar W_t)_{t\in [0,T]}$, an $\R^n$-valued $\Fb'$-martingale measure $\bar{{\cal M}}$ on $[0,T]\times U$, with the intensity $\bar\Lambda_t(\d u)\d t$ and a Poisson random measure $\bar{N}(\d t,\d z)$ satisfying $P'\circ \bar{N}^{-1}=P^1$  such that $\bar{P}=P'\circ (\bar{X}^{\Lambda},\bar{\Lambda},\bar{W},\bar{N})^{-1}$,  and it holds that {\rm(i)} $P'\circ(\bar{X}_0^{\Lambda})^{-1}=\lambda$; {\rm(ii)} $\bar{X}_0^{\Lambda},\bar{W}$ and $\bar{N}$ are independent under $P'$, and it holds that $P'$-$\as$, $\bar W_t=\int_0^t\int_U\bar{\cal M}(\d u,\d s)$; {\rm(iii)} the dynamics of state process $\bar{X}^{\bar\Lambda}$ obeys that, $P'$-a.s.,
\begin{align*}
\d \bar{X}^{\bar\Lambda}_t=\int_Ub(t,\bar{X}^{\bar\Lambda}_t,\mu_t,u)\bar\Lambda_t(\d u)\d t+\int_U\sigma(t,\bar{X}^{\bar\Lambda}_t,\mu_t,u)\bar{{\cal M}}(\d u,\d t)+\int_Z\gamma(t,\bar{X}^{\bar\Lambda}_{t-},\mu_{t-},z) \bar{N}(\d t,\d z).
\end{align*}         
Here, for $t\in[0,T]$, $\mu_t:=\Law^{P'}(\bar{X}^{\Lambda}_t|\F_t^{\bar{N}})$ where $(\F_t^{\bar{N}})_{t\in[0,T]}$ denotes the augmentation filtration of the natural filtration $\sigma(\bar{N}((0,s]\times A);~s\leq t , A\in\mathscr{Z})$ so that $(\F_t^{\bar N})_{t\in[0,T]}$ satisfies the usual conditions. Moreover, there exists a constant $C>0$ depending on $M,M_p(\lambda)$ and $T$ such that 
\begin{align}\label{eq:moment_p}
\E^{P'}\left[\sup_{t\in[0,T]}\left|\bar{X}^{\Lambda}_t\right|^p\right]\leq C
\end{align}
with $M$ being stated in \assref{ass1}.
\end{lemma}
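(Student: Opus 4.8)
The plan is to read \lemref{moment_p} as the standard equivalence between the relaxed martingale problem of \defref{relaxed_control} and a weak-solution formulation driven by a martingale measure, together with an a priori moment bound. I would split the argument into three blocks: the easy implication (weak solution $\Rightarrow$ admissible relaxed control rule), the harder implication (admissible relaxed control rule $\Rightarrow$ weak solution, obtained via a martingale representation for martingale measures), and finally the moment estimate \eqref{eq:moment_p}, which is routine.

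For the direction ``$\Leftarrow$'': given $(\Omega',\F',\Fb',P')$ supporting $(\bar\Lambda,\bar X^{\bar\Lambda},\bar{\mathcal M},\bar N)$ as in the statement, set $\bar P:=P'\circ(\bar X^{\bar\Lambda},\bar\Lambda,\bar N)^{-1}$. Properties (i) and (ii) of \defref{relaxed_control} are immediate from (i)--(ii) of the lemma and $P'\circ\bar N^{-1}=P^1$. For (iii), since $\bar N$ is a Poisson random measure with deterministic intensity, the compatibility property \eqref{compatible} applies, so $t\mapsto\Law^{P'}(\bar X_t^{\bar\Lambda}\mid\F_t^{\bar N})$ admits a c\`adl\`ag $\F_t^{\bar N}$-adapted version which transports to the required $\F_t^1$-adapted process $\bar{\boldsymbol\mu}$ on $\bar\Omega$. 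For (iv), apply It\^o's formula to $\phi(\bar X_t^{\bar\Lambda})$ for $\phi\in C_b^2(\R^n)$: the $\d t$-terms produce exactly $\int_U\mathbb{L}\phi(s,\bar X_s,\bar\mu_s,u)\bar\Lambda_s(\d u)$, the $\bar N$-integral produces exactly the jump term in ${\tt M}\phi$, and the $\bar{\mathcal M}$-integral against $\sigma$ is a true $(\bar P,\bar\Fb)$-martingale (using the $L^p$ bound \eqref{eq:moment_p} on $\bar X^{\bar\Lambda}$, the growth of $\sigma$ from (A2), and boundedness of $\nabla_x\phi,\nabla_{xx}^2\phi$).

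For the direction ``$\Rightarrow$'': this is the crux. Working on $(\bar\Omega,\bar\F,\bar\Fb,\bar P)$, apply the martingale property in (iv) to $\phi(x)=x_i$ and $\phi(x)=x_ix_j$ (after a localization/truncation to stay in $C_b^2$, justified by the moment bound) to identify the continuous local-martingale part $\bar M$ of $\bar X-\int_0^\cdot\!\int_U b\,\bar\Lambda\,\d s-\int_0^\cdot\!\int_Z\gamma\,\bar N$ and to compute $\langle\bar M\rangle_t=\int_0^t\!\int_U\sigma\sigma^\T(s,\bar X_s,\bar\mu_s,u)\bar\Lambda_s(\d u)\,\d s$. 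Then invoke the representation theorem for (orthogonal) martingale measures in the spirit of El Karoui--M\'el\'eard and the relaxed-control literature of Karoui et al.\ \cite{Karoui} and Haussmann--Lepeltier \cite{Haussmann} to construct, possibly after enlarging the stochastic basis by an independent Wiener space, a martingale measure $\bar{\mathcal M}$ on $[0,T]\times U$ with intensity $\bar\Lambda_t(\d u)\,\d t$, orthogonal to $\bar N$, and such that $\bar M_t=\int_0^t\!\int_U\sigma(s,\bar X_s,\bar\mu_s,u)\bar{\mathcal M}(\d s,\d u)$. The independence in (ii) of the lemma follows from the immersion property \eqref{compatible} together with the fact that $\bar N$ enters only through its deterministic-intensity jumps. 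I expect \emph{this enlargement step} to be the main obstacle: one must verify that it leaves $\bar P\circ\bar N^{-1}=P^1$ untouched and preserves the conditional-law constraint $\bar\mu_t=\Law^{\bar P}(\bar X_t\mid\F_t^1)$, so that the constructed object is genuinely a weak solution with the prescribed Poissonian common noise.

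Finally, the moment estimate \eqref{eq:moment_p} is standard: writing the state equation in integral form, taking $\sup_{s\le t}$, raising to the $p$-th power, and applying the Burkholder--Davis--Gundy inequality to the $\bar{\mathcal M}$-integral and to the compensated $\bar N$-integral (the compensator is finite since $\nu$ is finite), one obtains $\E^{P'}[\sup_{s\le t}|\bar X_s^{\Lambda}|^p]\le C(1+M_p(\lambda)^p)+C\int_0^t\E^{P'}[\sup_{r\le s}|\bar X_r^{\Lambda}|^p]\,\d s$, where linear growth of $b,\sigma$ from (A2) and of $\gamma$ from (A4) is used together with $M_p(\mu_s)^p\le\E^{P'}[|\bar X_s^{\Lambda}|^p]\le\E^{P'}[\sup_{r\le s}|\bar X_r^{\Lambda}|^p]$ by conditional Jensen. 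A preliminary stopping-time localization ensures finiteness before Gronwall's lemma is applied, yielding the bound with a constant $C$ depending only on $M$, $M_p(\lambda)$ and $T$.
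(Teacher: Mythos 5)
The paper omits the proof of \lemref{moment_p}, declaring it standard, and your proposal is precisely the standard argument that is being alluded to: the two directions of the iff are handled by (a) It\^o's formula to pass from the SDE to the martingale problem and (b) the martingale-measure representation theorem of El Karoui--M\'el\'eard, as used in Karoui et al.\ and Haussmann--Lepeltier, to pass back, together with a BDG--Gronwall argument for the moment bound (where the finite intensity $\nu(Z)<\infty$ makes the jump term harmless). The argument is correct in its essentials; the only place to tighten is the bookkeeping around $M_2(\mu_s)$ (one needs $\E^{P'}[M_2(\mu_s)^p]\le\E^{P'}[M_p(\mu_s)^p]=\E^{P'}[|\bar X_s^{\bar\Lambda}|^p]$, combining Jensen for the first inequality with the tower property for the equality, not conditional Jensen alone) and around the localization in the ``$\Rightarrow$'' direction, where the stopping-time truncation can be removed using only the a priori $\Pc_2$-membership of $\bar P$ rather than the $p$-th moment bound, so there is no circularity.
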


Consider the coordinate mappings defined in \eqref{eq:coordinatemappings}. The cost functional of our MFC problem is defined by\begin{align}\label{value_common}
	\mathcal{J}(\bar P) &:=\E^{\bar P}\left[\int_0^T\int_Uf(t,\bar{X}_t,\bar\mu_t,u)\bar\Lambda_t(\d u)\d t{+g(\bar{X}_T,\bar\mu_T)}\right],\quad \forall\bar P\in \mathsf{R},
\end{align}
where $\bar{\boldsymbol{\mu}}=(\bar\mu_t)_{t\in [0,T]}$ is the associated $\Fb^1$-adapted measure flow associated to $\Pb$ (see \defref{relaxed_control}). Denote by ${\mathsf{R}^{\rm opt}}:=\mathop{\arg\min}_{\bar{P}\in \mathsf{R}}\mathcal{J}(\bar{P})$ the set of optimal {relaxed} control rules.

\begin{remark}\label{relaxed_control_extension}
Note that, for any $\bar P\in \mathsf{R}$, the push forward measure $\bar P\circ \left(\bar{X},\bar\Lambda,\bar W,\bar N,\bar{\boldsymbol{\mu}}\right)^{-1}$ induces a probability measure on $\bar{\Omega}\times D([0,T];\Pc_2(\R^n))$. In view of this fact, we can give an equivalent formulation of \defref{relaxed_control}. We first extend $\bar\Omega$ to $\hat\Omega:=\bar\Omega\times D([0,T];\Pc_2(\R^n))$ and equip it with the product metric $d_{\hat\Omega}(\hat\omega^1,\hat\omega^2)=d_{\bar\Omega}(\bar\omega^1,\bar\omega^2)+d_{D([0,T];\Pc_2(\R^n))}(\boldsymbol{\mu}^1,\boldsymbol{\mu}^2)$ for $\hat\omega^i=(\bar\omega^i,\boldsymbol{\mu}^i)\in\hat\Omega,i=1,2$. Denote by $\hat\F$ the Borel $\sigma$-algebra on $\hat\Omega$ (also the product $\sigma$-algebra). Furthermore, we define the filtration $\Fb^{\boldsymbol{\mu}}=(\F_t^{\boldsymbol{\mu}})_{t\in [0,T]}$ by $\F_t^{\boldsymbol{\mu}}=\sigma\left(\mu_s(A),s\leq t,A\in\mathcal{B}(\R^n)\right)$ and then define the product filtration $\hat\Fb=(\hat\F_t)_{t\in [0,T]}$ with $\hat\F_t=\bar\F_t\otimes\F_t^{\boldsymbol{\mu}}$. Denote by $(\hat{X},\hat{\Lambda},{\hat{W}},\hat{N},\hat{\boldsymbol{\mu}})$ the corresponding coordinate mapping, $\ie$, for $\hat{\omega}=(\boldsymbol{x},q,\boldsymbol{w},\omega^1,\boldsymbol{\mu})\in\hat\Omega$,
\begin{align*}
\hat{X}_t(\hat\omega)=\boldsymbol{x}(t),~\hat\Lambda(\hat\omega)=q_t,~\hat W_t(\hat\omega)=\boldsymbol{w}(t),~\hat{N}(\hat\omega)=\omega^1(\d t,\d z),~\hat{\mu}_t=\mu_t~\text{for $t\in[0,T]$}.
\end{align*}
We still denote by $\Fb^X,\Fb^{\mathcal{Q}},\Fb^W,\Fb^1,\Fb^{\boldsymbol{\mu}}$ the the natural extensions of these filtrations to $\hat\Omega$ for simplicity. 
Then, one can easily verify that $\bar P\in \mathsf{R}$ iff there exists a probability measure $\hat P\in\Pc_2(\hat\Omega)$ such that {\rm(i)} $\hat P\circ \hat{X}_0^{-1}=\lambda$, $\hat P(\hat W_0=0)=1$ and $\hat{X}_0$ is independent of $(\hat{W},\hat{N})$ under $\hat{P}$; {\rm(ii)} the restriction of $\hat P$ to $\Omega^1$ satisfies $\hat P|_{\Omega^1}=P^1$; {\rm (iii)} $\hat{P}(\hat\mu_t=\Law^{\hat P}(\hat{X}_t|\F_t^1),\forall t\in [0,T])=1$; {\rm(iv)} for any test function $\phi\in C_b^2(\R^n\times\R^n)$, the process\begin{align*}
\hat{\tt M}^{\hat P}\phi(t):&=\phi(\hat{X}_t,\hat{W}_t)-\int_0^t\int_U\bar{\mathbb{L}}\phi(s,\hat{X}_s,\hat{W}_s,\hat\mu_s,u)\hat\Lambda_s(\d u)\d s\\
&\quad-\int_0^t\int_Z\left(\phi(\hat{X}_{s-}+\gamma(s,\hat{X}_{s-},\hat\mu_{s-},z),\hat{W}_s)-\phi(\hat{X}_{s-},{\hat{W}_s})\right)\hat{N}(\d s,\d z),\quad t\in[0,T]
\end{align*}
	is a $(\hat P,\hat\Fb)$-martingale; {\rm(v)} $\bar{P}=\hat{P}\circ (\bar{X},\bar{\Lambda},\bar{W},\bar{N})^{-1}$. Such subset of $\Pc_2(\hat\Omega)$ is denoted by {$\hat{\mathsf{R}}$}. The corresponding cost functional is defined by\begin{align}\label{hat_cost}
		\hat{J}(\hat{P}):=\E^{\hat P}\left[\int_0^T\int_Uf(t,\hat{X}_t,\hat{\mu}_t,u)\hat{\Lambda}_t(\d u)\d t{+g(\hat X_T,\hat\mu_T)} \right],\quad \forall\hat{P}\in{\hat{\mathsf{R}}}.
	\end{align}
Moreover, if there exists an $\hat{\mathbb{F}}$-progressively measurable $U$-valued process $\hat\alpha=(\hat\alpha_t)_{t\in [0,T]}$ on $\hat\Omega$ such that $\hat P(\hat\Lambda_t(\d u)\d t=\delta_{\hat\alpha_t}(\d u)\d t)=1$, we say that $\hat P$ corresponds to a strict control $\hat\alpha$ or it is called a strict control rule. The set of all strict control rules is denoted by $\hat{\mathsf{R}}^{\rm s}$.
    
\end{remark}
The next theorem is the main result for the MFC problems.
\begin{theorem}\label{existence}
	Let \assref{ass1} hold. The optimal {relaxed} control set $\mathsf{R}^{\rm opt}$ is nonempty.
\end{theorem}
Its proof consists of two main steps using our pathwise compactification approach, which are detailed later in Section \ref{sec:Compactification} and Section \ref{sec:Equivalence}. In a nutshell, 
\begin{itemize}
\item[(i)] In Step-1, we first consider an auxiliary model, called the pathwise formulation, by freezing a sample path of common noise. In this step, we can modify the classical compactification arguments in the Skorokhod topology in the model without common noise but with finite deterministic jumping times and obtain the existence of an optimal pathwise relaxed control. We further verify the measurability of the optimal solution with respect to the sample path to facilitate the aggregation form over all sample paths. 

\item[(ii)] In Step-2, we address the main challenge in our pathwise formulation approach, that is, whether the aggregation of the optimal pathwise relaxed controls over all sample paths of common noise is an optimal solution in the original model. To achieve this goal, we introduce the pathwise measure-valued control problem and establish a pathwise superposition principle in the auxiliary model with deterministic jumping times to bridge the desired equivalence between the pathwise formulation and the original problem.   
\end{itemize}

Moreover, we also have existence of {an optimal strict} control under the additional convexity assumption.
\begin{ass}\label{ass2}
	For any $(t,x,\mu)\in[0,T]\times\R^n\times\Pc_2(\R^n)$, the following set is convex in $\R^n\times\R^{n\times n}\times\R$:
	\begin{align*}
		K(t,x,\mu):=\left\{(b(t,x,\mu,u),\sigma\sigma^{\T}(t,x,\mu,u),z);~z\geq f(t,x,\mu,u),~u\in U\right\}.
	\end{align*}
\end{ass}
Then, we have the next corollary whose proof is standard (c.f. Corollary 3.8 in \cite{Lacker3}).
\begin{corollary}
	Let  \assref{ass1} and \assref{ass2} hold. There exists a strict control {$\bar{P}^{\rm s}\in \mathsf{R}^{\rm s}$}. 
\end{corollary}


\section{Step-1: Compactification in Pathwise Formulation}\label{sec:Compactification}

This section presents the first step of the proof for \thmref{existence}, for which we leverage the probabilistic characteristics of the Poisson random measure and introduce a novel pathwise formulation as if there is no common noise. We then establish the existence of optimal solutions in the pathwise formulation. 

\subsection{Pathwise formulation}\label{pathwise_formulation}
We first introduce the pathwise problem formulation and the corresponding pathwise admissible relaxed control rules by fixing an arbitrary sample path $\omega^1\in\Omega^1$.

\begin{definition}[Pathwise Relaxed Control (without common noise)]\label{relaxed_control_pathwise}
Let $\omega^1\in\Omega^1$ be fixed. We call a probability measure $P^{\omega^1}\in\Pc_2(\Omega)$ on $(\Omega,\F)$ a pathwise  admissible relaxed control rule (denoted by $P^{\omega^1}\in \mathsf{R}(\omega^1)$) if it holds that {\rm(i)}  $P^{\omega^1}(W_0=0)=1$, $ P^{\omega^1}\circ X_0^{-1}=\lambda$ and $X_0$ is $P^{\omega^1}$-independent of $W$;  {\rm(ii)} for any test function $\phi\in C^2_b(\R^n\times\R^n)$, the process
\begin{align*}
{\tt M}^{\omega^1,P^{\omega^1}}\phi(t):&=\phi(X_t,W_t)-\int_0^t\int_U\bar{\mathbb{L}}\phi(s,X_s,W_s,\mu_s^{\omega^1},u)\Lambda_s(\d u)\d s\\
&\quad-\int_0^t\int_Z\left(\phi(X_{s-}+\gamma(s,X_{s-},\mu_{s-}^{\omega^1},z),W_s)-\phi(X_{s-},W_s)\right)\omega^1(\d s,\d z),~~ t\in[0,T]
\end{align*}
is a $(P^{\omega^1},\Fb)$-martingale, where $\mu_t^{\omega^1}(\cdot)=P^{\omega^1}(X_t\in\cdot)$ for $t\in[0,T]$. Furthermore, if there exists an $\mathbb{F}$-progressively measurable $U$-valued process $\alpha=(\alpha_t)_{t\in [0,T]}$ on $\Omega$ such that $ {P^{\omega^1}}(\Lambda_t(\d u)\d t=\delta_{\alpha_t}(\d u)\d t)=1$, we say that $P^{\omega^1}$ corresponds to a strict control $\alpha$ or we call it a strict control rule. The set of all strict control rules is denoted by $\mathsf{R}^{\rm s}(\omega^1)$.
\end{definition}

We shall define the pathwise cost functional by, for any $\omega^1\in\Omega^1$,
\begin{align}\label{value_pathwise}
	\mathcal{J}(\omega^1,P):=\E^{P}\left[\int_0^T\int_Uf(t,X_t,\mu_t,u)\Lambda_t(\d u)\d t{+g(X_T,\mu_T)}\right],\quad \forall P\in\Pc_2(\Omega)
\end{align}
with $\mu_t:=P\circ X_t^{-1}$ for $t\in[0,T]$.
Introduce the set of optimal pathwise control rules defined by
\begin{align}\label{eq:RMopt}
	\mathsf{R}^{\rm opt}(\omega^1):=\mathop{\arg\min}\limits_{P^{\omega^1}\in \mathsf{R}(\omega^1)}\mathcal{J}(\omega^1,P^{\omega^1}).    
\end{align}

\begin{remark}

We stress that the measurability of $P^{\omega^1}$ with respect to $\omega^1\in\Omega^1$ is not required in the above definition. However, in the sequel, we will show the existence of a measurable selection of $\omega^1\mapsto \mathsf{R}^{\rm opt}(\omega^1)$, and hence the optimal value function $\inf_{P^{\omega^1}\in \mathsf{R}(\omega^1)}\mathcal{J}(\omega^1,P^{\omega^1})$ is measurable with respect to $\omega^1$.
\end{remark}

We then have the following martingale characterization and the corresponding moment estimate for the pathwise admissible relaxed control.
\begin{lemma}\label{moment_p_pathwise}
Let $\omega^1\in\Omega^1$ be fixed. Then,  $P^{\omega^1}\in \mathsf{R}(\omega^1)$ iff there exists a filtered probability space $(\Omega',\F',\Fb'=(\F'_t)_{t\in[0,T]},P')$ supporting a $\Pc(U)$-valued $\Fb'$-progressively measurable process $\Lambda^{\omega^1}=(\Lambda_t^{\omega^1})_{t\in [0,T]}$, an $\R^n$-valued $\Fb'$-adapted process $X^{\omega^1}=(X_t^{\omega^1})_{t\in[0,T]}$, a standard $n$-dimensional $(P',\Fb')$-Brownian motion $W^{\omega^1}=(W_t^{\omega^1})_{t\in [0,T]}$ and an $\R^n$-valued $\Fb'$-martingale measure ${\cal M}^{\omega^1}$ on $[0,T]\times U$, with intensity $\Lambda^{\omega^1}_t(\d u)\d t$  such that $P^{\omega^1}=P'\circ (X^{\omega^1},\Lambda^{\omega^1},W^{\omega^1})^{-1}$,  and it holds that {\rm(i)} $P'\circ(X_0^{\omega^1})^{-1}=\lambda$; {\rm(ii)} $W_t^{\omega^1}=\int_0^t\int_U\mathcal{M}^{\omega^1}(\d u,\d s)$, $\forall t\in [0,T]$, $P'$-$\as$; {\rm(iii)} the dynamics of state process $X^{\omega^1}$ obeys that, $P'$-a.s.,
\begin{align*}
\d X^{\omega^1}_t&=\int_Ub(t,X^{\omega^1}_t,\mu_t^{\omega^1},u)\Lambda^{\omega^1}_t(\d u)\d t+\int_U\sigma(t,X^{\omega^1}_t,\mu_t^{\omega^1},u){\cal M}^{\omega^1}(\d u,\d t)\\
&\quad+\int_Z\gamma(t,X^{\omega^1}_{t-},\mu_{t-}^{\omega^1},z)\omega^1(\d t,\d z)
\end{align*}    
with $\mu_t^{\omega^1}=\Law^{P'}(X_t^{\omega^1})$ for $t\in[0,T]$.   Moreover, there exists a constant $C > 0$ depending on $p,M$ and $M_p(\lambda)$, as well a constant $C_0>0$ only depending on $M$, such that
\begin{align}\label{eq:momentp00}
\E^{P'}\left[\sup_{t \in [0,T]} \left|X_t^{\omega^1}\right|^p\right] \leq C_0^{\left|D_{p^{\omega^1}}\right|+1} e^{CT},
\end{align}
where $\left|D_{p^{\omega^1}}\right|$ denotes the cardinality of the domain $D_{p^{\omega^1}}$.
\end{lemma}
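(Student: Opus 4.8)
The plan is to establish the equivalence by exhibiting the standard ``martingale problem $\Leftrightarrow$ weak solution on an enlarged space'' correspondence, adapted to the frozen-path setting of \defref{relaxed_control_pathwise}, and then to carry out the moment estimate by iterating over the finitely many deterministic jump times of $p^{\omega^1}$.

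\emph{Step 1: From a weak solution to a pathwise relaxed control rule.} Suppose we are given $(\Omega',\F',\Fb',P')$ supporting $\Lambda^{\omega^1}$, $X^{\omega^1}$ and the martingale measure $\Mc^{\omega^1}$ with intensity $\Lambda^{\omega^1}_t(\d u)\d t$, solving the stated SDE. Fix $\phi\in C_b^2(\R^n)$ and apply the It\^o formula for semimartingales with jumps to $\phi(X^{\omega^1}_t)$. The continuous martingale part involves $\int_0^t \int_U \nabla_x\phi(X^{\omega^1}_s)^\T\sigma(s,X^{\omega^1}_s,\mu^{\omega^1}_s,u)\,\Mc^{\omega^1}(\d u,\d s)$, whose quadratic variation is controlled by the moment bound of Step 3, so it is a true $(P',\Fb')$-martingale; the finite-variation part is exactly $\int_0^t\int_U \mathbb{L}\phi(s,X^{\omega^1}_s,\mu^{\omega^1}_s,u)\Lambda^{\omega^1}_s(\d u)\,\d s$; and the jump part, since $\omega^1(\d s,\d z)$ is a \emph{deterministic} finite counting measure, is the pathwise sum $\int_0^t\int_Z\bigl(\phi(X^{\omega^1}_{s-}+\gamma(s,X^{\omega^1}_{s-},\mu^{\omega^1}_{s-},z))-\phi(X^{\omega^1}_{s-})\bigr)\omega^1(\d s,\d z)$. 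Hence ${\tt M}^{\omega^1,P^{\omega^1}}\phi(t)$ pushed forward by $(X^{\omega^1},\Lambda^{\omega^1})$ is a $(P^{\omega^1},\Fb)$-martingale, and condition (i) is immediate from $P'\circ(X_0^{\omega^1})^{-1}=\lambda$; note $\mu^{\omega^1}_t=\Law^{P'}(X^{\omega^1}_t)=P^{\omega^1}\circ X_t^{-1}$, matching the definition.

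\emph{Step 2: From a pathwise relaxed control rule to a weak solution.} Conversely, given $P^{\omega^1}\in R_{\rm M}(\omega^1,\lambda)$, one builds the driving martingale measure by the usual representation argument (as in \lemref{moment_p} for the common-noise case): taking ${\tt M}^{\omega^1,P^{\omega^1}}\phi$ for $\phi(x)=x_i$ and $\phi(x)=x_ix_j$ and subtracting the explicit jump contribution (a measurable functional of $\omega^1$ and $X_{\cdot-}$), one identifies a continuous $\Fb$-martingale $M_t = X_t - X_0 - \int_0^t\int_U b(\cdots)\Lambda_s(\d u)\d s - (\text{jump sum})$ with $\langle M\rangle_t = \int_0^t\int_U \sigma\sigma^\T(\cdots)\Lambda_s(\d u)\d s$; then an orthogonal martingale measure $\Mc^{\omega^1}$ with the claimed intensity is constructed on a possibly enlarged space by the standard martingale-measure decomposition (Karoui--Méléard type), exactly as done for the case with common noise in the excerpt. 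This gives back a weak solution on $(\Omega',\F',\Fb',P')$. Since the jump term is driven by the fixed $\omega^1$ rather than a random Poisson measure, this direction is, if anything, simpler than the analogous common-noise statement cited earlier.

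\emph{Step 3: The moment estimate, which is the main point.} Write $D_{p^{\omega^1}}=\{\tau_1<\tau_2<\cdots<\tau_m\}$ with $m=|D_{p^{\omega^1}}|$, and set $\tau_0=0$, $\tau_{m+1}=T$. On each interval $[\tau_k,\tau_{k+1})$ the dynamics has no jumps, so by (A2), the Lipschitz/linear-growth bounds on $b,\sigma$, Burkholder--Davis--Gundy for $\int\sigma\,\d\Mc^{\omega^1}$, the fact that $M_p(\mu^{\omega^1}_t)\le \E^{P'}[\sup_{s\le t}|X^{\omega^1}_s|^p]^{1/p}$, and Gronwall, one gets $\E^{P'}[\sup_{t\in[\tau_k,\tau_{k+1}]}|X^{\omega^1}_t|^p]\le e^{C(T)}\bigl(1+\E^{P'}[|X^{\omega^1}_{\tau_k}|^p]\bigr)$ for a constant $C$ depending only on $p,M,T$ (uniform in $k$ because the Lipschitz/growth constants are uniform in $(t,u,z)$). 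At the jump time $\tau_{k+1}$, (A4) gives $|X^{\omega^1}_{\tau_{k+1}}|\le |X^{\omega^1}_{\tau_{k+1}-}| + M(1+|X^{\omega^1}_{\tau_{k+1}-}|+M_2(\mu^{\omega^1}_{\tau_{k+1}-}))$, and since $M_2(\mu^{\omega^1}_{\tau_{k+1}-})\le \E^{P'}[\sup_{s<\tau_{k+1}}|X^{\omega^1}_s|^2]^{1/2}\le \E^{P'}[\sup_{s\le \tau_{k+1}}|X^{\omega^1}_s|^p]^{1/p}$ (Jensen, $p>2$), taking $p$-th powers yields $\E^{P'}[\sup_{s\le\tau_{k+1}}|X^{\omega^1}_s|^p]\le C_0\,\bigl(1+\E^{P'}[\sup_{s<\tau_{k+1}}|X^{\omega^1}_s|^p]\bigr)$ for a constant $C_0$ depending only on $M$ (and $p$). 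Iterating the two bounds alternately over the $m$ jumps and the $m+1$ continuous pieces, absorbing the $1+(\cdot)$ additive constants into the multiplicative factors, produces $\E^{P'}[\sup_{t\in[0,T]}|X^{\omega^1}_t|^p]\le C_0^{m+1}e^{CT}$ (after enlarging $C_0$ to dominate also $1+M_p(\lambda)^p$ and the per-interval additive constant), which is exactly \eqref{eq:momentp00}.

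\emph{Expected obstacle.} The only genuinely delicate point is bookkeeping in Step 3: one must make sure the additive ``$+1$'' terms coming from linear growth and from (A4) at each of the $m$ stages are absorbed \emph{without} the constant's exponent exceeding $m+1$, i.e. the recursion must be run as $a_{k+1}\le C_0(1+a_k)\le 2C_0\max(1,a_k)$ and then one uses $\max(1,a_k)\le C_0^{k}e^{CT}\max(1,a_0)$ with $a_0 = M_p(\lambda)^p$, choosing $C_0$ large enough at the outset to swallow $2$, $e^{CT/(m+1)}$ is not needed, and $\max(1,M_p(\lambda)^p)$. Establishing that all the Gronwall/BDG constants on the continuous pieces can be taken uniform in $k$ (which they can, since Assumption~\ref{ass1} gives constants independent of $t,u,z$) is what makes the clean form $C_0^{|D_{p^{\omega^1}}|+1}e^{CT}$ possible; getting a bound of the shape $C(\omega^1)$ is routine, getting it with this explicit dependence on the jump count is the content of the lemma.
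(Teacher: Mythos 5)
Your proof follows essentially the same route as the paper's. The paper dismisses the martingale-measure characterization as ``standard'' and devotes its attention entirely to the moment estimate, which it proves exactly as you do: partition $[0,T]$ by the finitely many jump times in $D_{p^{\omega^1}}$, apply the usual Gronwall/BDG estimate on each jump-free interval to get a factor $e^{C(t_{i+1}-t_i)}(1+\E[|X_{t_i}|^p])$, and apply the linear-growth bound (A4) at each jump to get a multiplicative factor depending only on $M$, iterating to arrive at $C_0^{|D_{p^{\omega^1}}|+1}e^{CT}$. One small bookkeeping remark: you suggest enlarging $C_0$ to absorb $\max(1,M_p(\lambda)^p)$, but the lemma asserts $C_0$ depends only on $M$; the $\lambda$-dependence must instead be folded into the constant $C$ inside $e^{CT}$ (which is precisely what the paper does, taking $C$ to depend on $p,M,M_p(\lambda)$ while $C_0 = 1+2M$). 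Also, in the jump step the chain $M_2(\mu_{\tau_{k+1}-})\le \E[\sup_{s\le\tau_{k+1}}|X_s|^p]^{1/p}$ should stop at $\sup_{s<\tau_{k+1}}$ to avoid circularity; this is a slip of notation, not of substance, since only the left limit is used. Neither affects the validity of the argument.
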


\begin{proof}
	The proof of martingale measure characterization is standard and we only focus on the moment estimate. We fix $\omega^1\in\Omega^1$ and let $0=t_0^{\omega^1}<t_1^{\omega^1}<\cdots<t_k^{\omega^1}\leq t_{k+1}^{\omega^1}:=T$ be the jumping times under $\omega^1$ during $[0,T]$, i.e., the domain of definition of the corresponding point function $p^{\omega^1}$ is given by $D_{p^{\omega^1}}=\{t_1^{\omega^1},\ldots,t_k^{\omega^1}\}$. Here, $k$ ($k$ may depend on $\omega^1$, but we omit the superscript to ease the notation) is finite since the intensity measure $\nu(\d z)$ is finite.  Note that by standard moment estimation, we have, for $i=0,1,\dots,k$,
	\begin{align}\label{t_i}
		\E^{P'}\left[\sup_{t\in [t_i^{\omega^1},t_{i+1}^{\omega^1})}\left|X_t^{\omega^1}\right|^p\right]\leq e^{C\left(t_{i+1}^{\omega^1}-t_{i}^{\omega^1}\right)}\left\{1+\E^{P'}\left[\left|X_{t_i^{\omega^1}}^{\omega^1}\right|^p\right]\right\},
	\end{align}
	for some constant $C>0$ which depends on $p$ and $M$ only. We first consider $i=0$, $\ie$,
	\begin{align}\label{t_1}
		\E^{P'}\left[\sup_{t\in [0,t_1^{\omega^1})}\left|X_t^{\omega^1}\right|^p\right]\leq e^{Ct_1^{\omega^1}}\left\{1+\left(M_p(\lambda)\right)^p\right\}.
	\end{align}
	On the other hand, we have \begin{align}\label{Xt1}
		X_{t_1^{\omega^1}}^{\omega^1}=X_{t_1^{\omega^1}-}^{\omega^1}+\gamma\left(t_1^{\omega^1},X_{t_1^{\omega^1}-}^{\omega^1},\mu_{t_1^{\omega^1}-}^{\omega^1},p^{\omega^1}(t_1^{\omega^1})\right).
	\end{align}
	Therefore, by combining \equref{t_1} and \equref{Xt1} together, we can derive by using \assref{ass1}-(A4) that 
\begin{align}\label{t1_moment}
\E^{P'}\left[\left|X_{t_1^{\omega^1}}^{\omega^1}\right|^p\right]\leq (1+2M)e^{Ct_1^{\omega^1}}.
\end{align}
Here, the constant $C$ depends on $p,M,M_p(\lambda)$ and may be different from \equref{t_1} (and also may vary in the sequel).  Inserting \equref{t1_moment} into \equref{t_i} for $i=2$, we may derive similarly that $\E^{P'}[|X_{t_2^{\omega^1}}^{\omega^1}|^p]\leq (1+2M)^2e^{Ct_2^{\omega^1}}$.
By iterating this procedure, we obtain, for $i=1,\ldots,k$,
\begin{align}\label{Xti}		\E^{P'}\left[\left|X_{t_i^{\omega^1}}^{\omega^1}\right|^p\right]\leq (1+2M)^ie^{Ct_i^{\omega^1}}.
\end{align}
Combing \equref{t_i} and \equref{Xti}, we readily conclude the desired estimate \equref{eq:momentp00}.
\end{proof}

As a consequence of \lemref{moment_p_pathwise}, the set of admissible pathwise relaxed control $\mathsf{R}(\omega^1)$ is nonempty for every $\omega^1\in\Omega^1$. Moreover, thanks to \lemref{moment_p_pathwise}, we can provide an alternative characterization of $\mathsf{R}(\omega^1)$ without the proof in the next result.

\begin{lemma}\label{Y_characterization}
Let $\omega^1\in\Omega^1$ be fixed. We have $P^{\omega^1}\in \mathsf{R}(\omega^1)$ iff there exists an $\Fb$-adapted process $Y=(Y_t)_{t\in [0,T]}$ (depending on $P^{\omega^1}$) such that {\rm (i)} $Y$ is continuous with probability $1$; {\rm (ii)} $P^{\omega^1}\circ Y_0^{-1}=\lambda$; {\rm (iii)} $P^{\omega^1}(X_{\cdot}=Y_{\cdot}+\int_0^{\cdot}\int_Z\gamma(s,X_{s-},\mu_s^{\omega^1},z)\omega^1(\d s,\d z))=1$ with $\mu_t^{\omega^1}=P^{\omega^1}\circ X_t^{-1}$; {\rm (iv)} for any test function $\phi\in C_b^2(\R^n\times\R^n)$, the process
\begin{align*}
\tilde{\tt M}^{\omega^1,P^{\omega^1}}\phi(t):=\phi(Y_t,W_t)-\int_0^t\int_U\tilde{\Lb}\phi(s,X_s,Y_s,W_s,\mu_s^{\omega^1},u)\Lambda_s(\d u)\d s,\quad t\in[0,T]
\end{align*}
is a $(P^{\omega^1},\Fb)$-martingale. Here, the infinitesimal generator $\tilde{\Lb}$ acting on $\phi\in C_b^2(\R^n\times\R^n)$ is defined by, for $(t,x,y,\mu,u)\in [0,T]\times\R^n\times\R^n\times\Pc_2(\R^n)\times\R^l$,
\begin{align*}
\tilde{\Lb}\phi(t,x,y,w,\mu,u)(y)=\bar b(t,x,\mu,u)^{\T}\nabla\phi(y,w)+\frac12\tr\left(\bar\sigma\bar\sigma^{\T}(t,x,\mu,u)\nabla^2\phi(y,w)\right).
\end{align*}
\end{lemma}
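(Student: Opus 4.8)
The plan is to recognise $Y$ as the continuous skeleton of $X$, obtained by subtracting the finitely many deterministic jumps prescribed by the frozen path $\omega^1$. Write $D_{p^{\omega^1}}=\{t_1^{\omega^1},\dots,t_k^{\omega^1}\}$ with $0<t_1^{\omega^1}<\cdots<t_k^{\omega^1}\le T$ for the jumping set of $\omega^1$ (finite since $\nu$ is finite) and set $t_0^{\omega^1}:=0$, $t_{k+1}^{\omega^1}:=T$. As both $\omega^1$ and the deterministic flow $\mu^{\omega^1}_\cdot:=P^{\omega^1}\circ X_\cdot^{-1}$ are frozen, the process
\[
J_t:=\int_0^t\int_Z\gamma\bigl(s,X_{s-},\mu_{s-}^{\omega^1},z\bigr)\,\omega^1(\d s,\d z)=\sum_{i:\,t_i^{\omega^1}\le t}\gamma\bigl(t_i^{\omega^1},X_{t_i^{\omega^1}-},\mu_{t_i^{\omega^1}-}^{\omega^1},p^{\omega^1}(t_i^{\omega^1})\bigr)
\]
is a piecewise-constant, finite-variation, $\Fb$-adapted process and a measurable functional of $X_{[0,t]}$; the candidate for the process in the statement is simply $Y:=X-J$, and I would verify the two implications in turn.

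For necessity, given $P^{\omega^1}\in R_{\rm M}(\omega^1,\lambda)$ I would first invoke \lemref{moment_p_pathwise} to realise it on a filtered space $(\Omega',\F',\Fb',P')$ carrying a martingale measure $\Mc^{\omega^1}$ with intensity $\Lambda^{\omega^1}_t(\d u)\,\d t$ and a solution $X^{\omega^1}$ of the SDE stated there, with $P^{\omega^1}=P'\circ(X^{\omega^1},\Lambda^{\omega^1})^{-1}$ and $\mu^{\omega^1}_t=\Law^{P'}(X^{\omega^1}_t)$. Setting $Y:=X-J$ on $\Omega$ (equivalently $Y^{\omega^1}:=X^{\omega^1}-J^{\omega^1}$ on $\Omega'$), items (ii) and (iii) hold by construction, and (i) holds because the SDE forces $X^{\omega^1}$ to jump only at the $t_i^{\omega^1}$ with jump size exactly $\gamma(t_i^{\omega^1},X^{\omega^1}_{t_i^{\omega^1}-},\mu^{\omega^1}_{t_i^{\omega^1}-},p^{\omega^1}(t_i^{\omega^1}))$, which $J^{\omega^1}$ removes; pushing forward, $Y$ is $P^{\omega^1}$-a.s.\ continuous. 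For (iv), since $\tilde{\tt M}^{\omega^1,P^{\omega^1}}\phi(t)$ is a measurable functional of $(X_{[0,t]},\Lambda_{[0,t]})$, it is enough to check the martingale property on $\Omega'$, where $Y^{\omega^1}$ has the continuous dynamics
\[
\d Y^{\omega^1}_t=\int_U b\bigl(t,X^{\omega^1}_t,\mu^{\omega^1}_t,u\bigr)\Lambda^{\omega^1}_t(\d u)\,\d t+\int_U\sigma\bigl(t,X^{\omega^1}_t,\mu^{\omega^1}_t,u\bigr)\Mc^{\omega^1}(\d u,\d t);
\]
then Itô's formula applied to $\phi(Y^{\omega^1}_t)$, together with the very definition of $\tilde{\Lb}$, gives
\[
\tilde{\tt M}^{\omega^1,P^{\omega^1}}\phi(t)=\phi(Y^{\omega^1}_0)+\int_0^t\nabla_x\phi(Y^{\omega^1}_s)^{\T}\int_U\sigma\bigl(s,X^{\omega^1}_s,\mu^{\omega^1}_s,u\bigr)\Mc^{\omega^1}(\d u,\d s),
\]
which is a true $(P',\Fb')$-martingale by the boundedness of $\nabla_x\phi$, the moment bound \equref{eq:momentp00} and the linear growth of $\sigma$ from \assref{ass1}.

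For sufficiency, let $P^{\omega^1}\in\Pc_2(\Omega)$ and $Y$ satisfy (i)--(iv). Evaluating (iii) at $t=0$ gives $X_0=Y_0$, hence $P^{\omega^1}\circ X_0^{-1}=\lambda$ by (ii). Next I would run the classical martingale-problem-to-martingale-measure construction, exactly as in the proof of \lemref{moment_p_pathwise}: possibly after an enlargement of the underlying filtered space, condition (iv) produces a martingale measure $\Mc^{\omega^1}$ with intensity $\Lambda_t(\d u)\,\d t$ for which $\d Y_t=\int_U b(t,X_t,\mu^{\omega^1}_t,u)\Lambda_t(\d u)\,\d t+\int_U\sigma(t,X_t,\mu^{\omega^1}_t,u)\Mc^{\omega^1}(\d u,\d t)$, so $Y$ is a continuous semimartingale. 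By (iii), on each inter-jump interval $(t_i^{\omega^1},t_{i+1}^{\omega^1})$ the increment $X-Y$ is a constant, while $\Delta X_{t_i^{\omega^1}}=\gamma(t_i^{\omega^1},X_{t_i^{\omega^1}-},\mu^{\omega^1}_{t_i^{\omega^1}-},p^{\omega^1}(t_i^{\omega^1}))$. Applying Itô's formula to $\phi(X_t)$ on each such interval — where $\nabla_x\phi$ and $\nabla_{xx}^2\phi$ at $X_t$ are precisely the derivatives entering $\mathbb{L}\phi$ — and adding the jump contributions $\phi(X_{t_i^{\omega^1}})-\phi(X_{t_i^{\omega^1}-})$, I would obtain ${\tt M}^{\omega^1,P^{\omega^1}}\phi(t)=\phi(X_0)+\int_0^t\nabla_x\phi(X_s)^{\T}\int_U\sigma(s,X_s,\mu^{\omega^1}_s,u)\Mc^{\omega^1}(\d u,\d s)$, a $(P^{\omega^1},\Fb)$-martingale, which is exactly the requirement of \defref{relaxed_control_pathwise}; hence $P^{\omega^1}\in R_{\rm M}(\omega^1,\lambda)$.

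I expect the only genuinely delicate step to be, in the sufficiency direction, the passage from the $C_b^2$-martingale problem (iv) for $Y$ to an honest martingale-measure decomposition of $Y$: this is the standard representation step that in general forces an enlargement of the probability space, and I would carry it out verbatim as in the proof of \lemref{moment_p_pathwise} (truncating the coordinate and quadratic maps to read off the drift and quadratic-variation characteristics of $Y$, then constructing $\Mc^{\omega^1}$). Everything else is bookkeeping: finiteness of $D_{p^{\omega^1}}$ reduces the jump analysis to finitely many deterministic instants and Itô's formula does the rest; one only has to keep the coefficients $b,\sigma$ evaluated along $X$ (not $Y$), and to evaluate the discontinuous flow at its left limit $\mu^{\omega^1}_{t_i^{\omega^1}-}$ at each atom $t_i^{\omega^1}$, so that in item (iii) the flow inside $\gamma$ is read as $\mu^{\omega^1}_{s-}$, consistently with \defref{relaxed_control_pathwise}.
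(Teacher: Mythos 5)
Your argument is correct and follows exactly the route the paper intends for this result, which it states without proof as a consequence of \lemref{moment_p_pathwise}: pass to the martingale-measure representation of the jump SDE, identify $Y=X-\int_0^{\cdot}\int_Z\gamma(s,X_{s-},\mu_{s-}^{\omega^1},z)\,\omega^1(\d s,\d z)$ as the continuous part, and go back and forth with It\^o's formula, with the standard enlargement/representation step supplying the martingale measure in the converse direction. Your handling of the finitely many deterministic jump times, of the integrability needed to get true martingales, and of the $\mu_s$ versus $\mu_{s-}$ reading in item (iii) is consistent with \defref{relaxed_control_pathwise}, so no gap remains.
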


\begin{remark}\label{Y_characterization_rem}
Motivated by \lemref{Y_characterization}, we can extend $\Omega$ to $\tilde\Omega:=\Omega\times\C$ and consider the product metric $d_{\tilde\Omega}(\tilde\omega_1,\tilde\omega_2):=d_{\Omega}(\omega_1,\omega_2)+\|\boldsymbol{y}_1-\boldsymbol{y}_2\|_{\infty}$ with $\tilde\omega_i=(\omega_i,\boldsymbol{y}_i)\in\tilde\Omega$, $i=1,2$ and the product $\sigma$-algebra $\tilde {\cal F}=\F\otimes\mathcal{B}(\C)$ with $\mathcal{B}(\C)$ being  the Borel $\sigma$-algebra of $\C$. Moreover, let $\F_t^{\C}$ be the Borel $\sigma$-algebra of $\C$ up to time $t$, and set $\tilde\Fb=(\tilde\F_t)_{t\in[0,T]}$ with $\tilde\F_t=\F_t\otimes\F_t^{\C}$. The coordinate mappings on $\tilde\Omega$ are defined by 
\begin{align*}
\tilde X_t(\tilde\omega)=\boldsymbol{x}(t),\quad \tilde\Lambda_t(\tilde\omega)=q_t,\quad \tilde W_t(\tilde\omega)=\boldsymbol{w}(t),\quad\tilde{Y}_t(\tilde\omega)=\boldsymbol{y}(t),\quad \forall \tilde\omega=(\boldsymbol{x},q,\boldsymbol{w},\boldsymbol{y})\in\tilde\Omega.
\end{align*}
Note that, if $P^{\omega^1}\in \mathsf{R}(\omega^1)$, then $P^{\omega^1}\circ (X,\Lambda,W,Y)^{-1}$ induces a probability measure on $(\tilde\Omega,\tilde\F)$, {where $(X,\Lambda,W,Y)$ under $P^{\omega^1}$ is the continuous process introduced in \lemref{Y_characterization}}. In this manner, we can restate \lemref{Y_characterization} as follows: $P^{\omega^1}\in \mathsf{R}(\omega^1)$ iff there exists a $\tilde{P}^{\omega^1}\in\Pc_2(\tilde{\Omega})$ such that {\rm (i)} $\tilde P^{\omega^1}(\tilde X_{\cdot}=\tilde Y_{\cdot}+\int_0^{\cdot}\int_Z\gamma(s,\tilde X_{s-},\tilde\mu_s^{\omega^1},z)\omega^1(\d z,\d s))=1$ with $\tilde{\mu}^{\omega^1}_t=\tilde{P}^{\omega^1}\circ \tilde{X}_t^{-1}$; {\rm(ii)} $\tilde P(\tilde W_0=0)=1$, $\tilde{P}\circ\tilde{Y}_0^{-1}=\lambda$ and $\tilde Y_0$ is $\tilde P$-independent of $\tilde W$; {\rm (iii)} for any test function $\phi\in C_b^2(\R^n\times\R^n)$, the following process
\begin{align*}
\tilde{\tt M}^{\omega^1,\tilde{P}^{\omega^1}}\phi(t):=\phi(\tilde Y_t,\tilde W_t)-\int_0^t\int_U\tilde{\Lb}\phi(s,\tilde X_s,\tilde Y_s,\tilde W_s,\tilde \mu_s^{\omega^1},u)(\tilde Y_s)\Lambda_s(\d u)\d s,\quad t\in[0,T]
\end{align*}
is a $(\tilde P^{\omega^1},\tilde\Fb)$-martingale.
\end{remark}
For $\bar{P}\in \mathsf{R}^{\rm s}$ in \defref{relaxed_control}, let us set $\rho_t(\omega^1)=\Law^{\bar{P}}((\bar{X}_t,\bar{\alpha}_t)|\F_t^1)(\omega^1)$ for $P^1$-$\as$ $\omega^1\in\Omega^1$. Then, the disintegration holds that $\rho_t(\omega^1)(\d x,\d u)=\hat{\alpha}_t(\omega^1)(x,\d u)\mu_t(\omega^1)(\d x)$.
As a result, for any test function $\phi\in C_b^2(\R^n)$, utilizing the martingality of ${\tt M}^{\bar P}\phi=({\tt M}^{\bar P}\phi(t))_{t\in[0,T]}$ under $\bar{P}$, it results in the following Fokker-Planck equation that, for all $t\in[0,T]$,
\begin{align}
\langle \phi,\mu_t\rangle&=\langle \phi,\lambda\rangle+\E^{\bar{P}}\left[\int_0^t\int_U\mathbb{L}\phi(s,\bar{X}_s,\mu_s,u)\bar{\Lambda}_s(\d u)\d s\middle|\F_t^1\right]\nonumber\\
&\quad+\E^{\bar{P}}\left[\int_0^t\int_Z(\phi(\bar{X}_{s-}+\gamma(s,\bar{X}_{s-},\mu_{s-},z))-\phi(\bar{X}_{s-}))\bar{N}(\d s,\d z)\middle|\F_t^1\right]\nonumber\\
&=\langle \phi,\lambda\rangle+\int_0^t\E^{\bar{P}}\left[\mathbb{L}\phi(s,\bar{X}_s,\mu_s,\bar{\alpha}_s)\middle|\F_t^1\right]\d s\nonumber\\
&\quad+\int_0^t\E^{\bar{P}}\left[\int_Z\left(\phi(\bar{X}_{s-}+\gamma(s,\bar{X}_{s-},\mu_{s-},z))-\phi(\bar{X}_{s-})\right)\bar{N}(\d s,\d z)\middle|\F_t^1\right]\nonumber\\
&=\langle \phi,\lambda\rangle+\int_0^t\left\langle \int_U\mathbb{L}\phi(s,\cdot,\mu_s,u)\hat{\alpha}_s(\cdot,\d u),\mu_s\right\rangle\d s\nonumber\\
&\quad+\int_0^t\int_Z\left\langle\phi(\cdot+\gamma(s,\cdot,\mu_{s-},z))-\phi(\cdot),\mu_{s-}\right\rangle\bar{N}(\d s,\d z).\label{FP_mu}
\end{align}
Here, we have used the compatible condition \eqref{compatible}.

In view of the Fokker-Planck equation \eqref{FP_mu}, it is natural for us to also consider the pathwise measure-valued control in a model without common noise.

\begin{definition}[Pathwise Measure-Valued Control (without common noise)]\label{measure_valued}
	Let $\omega^1\in\Omega^1$ be fixed.	We call a couple of a c\`{a}dl\`{a}g $\Pc_2(\R^n)$-valued measure flow $\boldsymbol{\mu}^{\omega^1}=(\mu_t^{\omega^1})_{t\in [0,T]}$ and a (measurable) kernel $\hat{\alpha}^{\omega^1}:[0,T]\times\R^n\to\Pc(U)$, denoted by $\hat{\alpha}^{\omega^1}_t(x,\d u)$, a pathwise  admissible measure-valued control (denoted by $(\boldsymbol{\mu}^{\omega^1},\hat{\alpha}^{\omega^1})\in \mathsf{R}_{\rm FP}(\omega^1)$) if it holds that {\rm(i)}  $ \mu_0^{\omega^1}=\lambda$;   {\rm (ii)} for any $\phi\in C_b^2(\R^n)$, $\boldsymbol{\mu}^{\omega^1}=(\mu_t^{\omega^1})_{t\in [0,T]}$ solves the following Fokker-Planck equation:
	\begin{align}\label{FP_measure}
		\langle\phi,\mu_t^{\omega^1}\rangle&=\langle\phi,\lambda\rangle+\int_0^t\left\langle\int_U\mathbb{L}\phi(s,\cdot,\mu_s^{\omega^1},u)\hat{\alpha}_s^{\omega^1}(\cdot,\d u),\mu_s^{\omega^1}\right\rangle\d s\nonumber\\
		&\quad+\int_0^t\int_Z\left\langle \phi(\cdot+\gamma(s,\cdot,\mu_{s-}^{\omega^1},z))-\phi(\cdot),\mu_{s-}^{\omega^1}\right\rangle\omega^1(\d z,\d s).
	\end{align}
\end{definition}

For every $\omega^1\in\Omega^1$ and $(\boldsymbol{\mu}^{\omega^1},\hat{\alpha}^{\omega^1})\in \mathsf{R}_{\rm FP}(\omega^1)$, the corresponding value function is then defined by
\begin{align}\label{value_measure}
\mathscr{J}(\omega^1,\boldsymbol{\mu}^{\omega^1},\hat{\alpha}^{\omega^1})&:=\int_0^T\int_{\R^n}\int_Uf(t,x,\mu_t^{\omega^1},u)\hat{\alpha}^{\omega^1}_t(x,\d u)\mu_t^{\omega^1}(\d x)\d t\nonumber\\
&\quad+{\int_{\R^n}g(x,\mu_T^{\omega^1})\mu_T^{\omega^1}(\d x)}.
\end{align}

\begin{remark}
In \defref{measure_valued}, we do not require the measurability of $(\boldsymbol{\mu}^{\omega^1}, \hat{\alpha}^{\omega^1})$ with respect to $\omega^1$, which clearly broadens the applicability of our approach.
The kernel $\hat{\alpha}^{\omega^1}$ introduced in \defref{measure_valued} will play a crucial role in our analysis. Given a probability measure $Q \in \mathcal{P}_2(\D)$, we can recover a probability measure $P \in \mathcal{P}_2(\D\times\mathcal{Q})$ via the push-forward mapping $P = Q \circ \Phi_{\omega}^{-1}$, where the mapping $\Phi_{\hat{\alpha}^{\omega^1}}:\D\to\D\times\mathcal{Q}$ is defined by
\begin{align}\label{recover} 
\Phi_{\hat{\alpha}^{\omega^1}}(\boldsymbol{x}) := \left( \boldsymbol{x},\hat{\alpha}_t^{\omega^1}(\boldsymbol{x}(t),\d u)\d t \right),\quad \forall \boldsymbol{x}\in\mathcal{D}.
\end{align}
\end{remark}

\begin{remark}
Note that, in \assref{ass1}, we require that the jump coefficient $\gamma(\cdot)$ in \equref{double_reflection} is uncontrolled. When {the jump coefficient} $\gamma(\cdot)$ depends on the control variable, the Fokker-Planck equation \equref{FP_mu} becomes that
\begin{align}
\langle \phi,\mu_t\rangle&=\langle \phi,\lambda\rangle+\int_0^t\left\langle \int_U\mathbb{L}\phi(s,\cdot,\mu_s,u)\hat{\alpha}_s(\cdot,\d u),\mu_s\right\rangle\d s\nonumber\\
&\quad+\int_0^t\int_Z\left\langle\int_U\phi(\cdot+\gamma(s,\cdot,\mu_{s-},u,z))\hat{\alpha}_{s-}(x,\d u)-\phi(\cdot),\mu_{s-}\right\rangle\bar{N}(\d s,\d z).\label{FP_controlled}
\end{align}
However, to establish a superposition principle in the pathwise formulation analogous to \thmref{thm:equivalence}-{\bf (ii)}, the martingale condition in \defref{relaxed_control} would need to be modified accordingly that the process 
\begin{align*}
&{\tt M}^{\bar{P}}\phi(t):=\phi(\bar{X}_t,\bar{W}_t)-\int_0^t\int_U\bar{\mathbb{L}}\phi(s,\bar{X}_s,\bar{W}_s,\bar\mu_s,u)\bar\Lambda_s(\d u)\d s\\
&\qquad-\int_0^t\int_Z\int_U\left(\phi(\bar{X}_{s-}+\gamma(s,\bar{X}_{s-},\bar\mu_{s-},u,z),W_s)-\phi(\bar{X}_{s-},W_s)\right)\bar\Lambda_{s-}(\d u)\bar{N}(\d s,\d z),~ t\in[0,T]
\end{align*}
is a $(\bar\Fb,\bar P)$-martingale, which in turn leads to the following Fokker-Planck equation:
\begin{align*}
\langle \phi,\mu_t\rangle&=\langle \phi,\lambda\rangle+\int_0^t\left\langle \int_U\mathbb{L}\phi(s,\cdot,\mu_s,u)\hat{\alpha}_s(\cdot,\d u),\mu_s\right\rangle\d s\nonumber\\
&\quad+\int_0^t\int_Z\left\langle\phi\left(\cdot+\int_U\gamma(s,\cdot,\mu_{s-},u,z)\hat{\alpha}_{s-}(x,\d u)\right)-\phi(\cdot),\mu_{s-}\right\rangle\bar{N}(\d s,\d z).
\end{align*}
However, this Fokker-Planck equation differs substantially from \equref{FP_controlled}, which causes a technical gap in showing some equivalence results in Section \ref{sec:Equivalence}. Therefore, in the present paper, we restrict our attention to the case where $\gamma$ is uncontrolled, and leave the controlled jump case for the future study.
\end{remark}

\subsection{Existence of pathwise optimal controls}

The aim of this subsection is to show that the set $\mathsf{R}^{\rm opt}(\omega^1)$ of optimal pathwise relaxed controls defined by \eqref{eq:RMopt} is nonempty for any $\omega^1\in\Omega^1$ by applying the compactification argument in the model with deterministic jumping times under the Skorokhod topology. This approach is classical and can be traced back to {EI Karoui et al.} \cite{Karoui} and Haussmann and Suo~\cite{Haussmann1}.
\begin{prop}\label{existence_pathwise}
For any $\omega^1\in\Omega^1$,  the set $\mathsf{R}^{\rm opt}(\omega^1)\neq\varnothing$ and is compact. Moreover, there exists a measurable selection given by
	\begin{align}\label{mea_sele}
		\omega^1\mapsto P^{\omega^1}_*\in \mathsf{R}^{\rm opt}(\omega^1).
	\end{align}
	As a result, the value function $\inf_{P^{\omega^1}\in \mathsf{R}(\omega^1)}\mathcal{J}(\omega^1,P^{\omega^1})$ is measurable with respect to $\omega^1$.
\end{prop}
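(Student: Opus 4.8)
The plan is to split the proof of \propref{existence_pathwise} into two parts: first the nonemptiness of $R_{\rm M}^{\rm opt}(\omega^1,\lambda)$ for each fixed $\omega^1$ via a compactification argument, and then the existence of a measurable selection via a measurable selection theorem, from which the stated measurability of the value function follows immediately.

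\textbf{Step 1: Nonemptiness for a fixed $\omega^1$.} Fix $\omega^1\in\Omega^1$ and let $0=t_0<t_1<\cdots<t_k\le t_{k+1}=T$ be the jumping times of $p^{\omega^1}$, so that $\omega^1(\d s,\d z)$ is a fixed, finite, \emph{deterministic} measure supported on $\{t_1,\dots,t_k\}$. I would take a minimizing sequence $(P_n)_{n\ge1}\subset R_{\rm M}(\omega^1,\lambda)$ for $\mathcal{J}(\omega^1,\cdot)$; this set is nonempty by \lemref{moment_p_pathwise}. The first task is tightness of $(P_n)$ on $\Omega=\D\times\mathcal{Q}$. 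Tightness on the $\mathcal{Q}$-component is automatic since $\mathcal{Q}$ is compact. For the $\D$-component, I would use the martingale characterization of \lemref{Y_characterization} (equivalently \remref{Y_characterization_rem}): write $X_\cdot = Y_\cdot + \int_0^\cdot\int_Z\gamma(s,X_{s-},\mu^{\omega^1}_s,z)\,\omega^1(\d s,\d z)$, where $Y$ is the continuous part. The continuous part $Y$ is a continuous semimartingale with drift and diffusion coefficients bounded uniformly (using (A2), (A4), the moment bound \eqref{eq:momentp00}, and compactness of $U$), so Aldous' criterion / Kolmogorov's tightness criterion gives tightness of $\Law^{P_n}(Y)$ in $\C$; the jump part is a finite sum of $k$ terms $\gamma(t_i,X_{t_i-},\mu^{\omega^1}_{t_i-},p^{\omega^1}(t_i))$ which, being composed with the convergent-in-law continuous part and uniformly controlled by (A4), adds only tight perturbations at finitely many fixed times, hence $\Law^{P_n}(X)$ is tight in $\D$. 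Here the crucial simplification over the original problem is precisely that the jump times are \emph{deterministic}, so Skorokhod convergence near $t_i$ is well-behaved. Passing to a subsequence, $P_n\to P_*$ weakly in $\Pc_2(\Omega)$ (upgrading to $\Pc_2$-convergence using the uniform $p$-th moment bound, $p>2$, which gives uniform integrability of the second moments).

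\textbf{Step 2: $P_*$ is admissible and optimal.} I would check that $P_*\in R_{\rm M}(\omega^1,\lambda)$ by passing to the limit in the defining conditions. Condition (i), $P_*\circ X_0^{-1}=\lambda$, is preserved because $\boldsymbol{x}\mapsto\boldsymbol{x}(0)$ is continuous on $\D$. For the martingale condition (ii): the generator terms $\int_0^t\int_U\mathbb{L}\phi(s,X_s,\mu_s^{\omega^1},u)\Lambda_s(\d u)\,\d s$ are handled by the joint continuity (A1) and Lipschitz estimates (A2)–(A3), together with convergence of the marginal flow $\mu^{\omega^1,n}_t = P_n\circ X_t^{-1}\to P_*\circ X_t^{-1}$ at continuity times (and the fact that there are only countably many discontinuity times of the limit flow). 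The jump term is a finite sum evaluated at the deterministic times $t_i$; here one must be slightly careful because $\boldsymbol{x}\mapsto\boldsymbol{x}(t_i)$ and $\boldsymbol{x}\mapsto\boldsymbol{x}(t_i-)$ are \emph{not} continuous on $\D$, but one can use that for each fixed $t_i$, $P_*(X_{t_i}=X_{t_i-})$ need not hold — instead I would express the jump contribution through the continuous part $Y$ using the characterization in \remref{Y_characterization_rem}, where the relevant functionals $\tilde Y_t$ and $\int_0^t(\cdots)(\tilde Y_s)\,\Lambda_s(\d u)\,\d s$ \emph{are} continuous on $\tilde\Omega$, and reconstruct $X$ from $Y$ and $\omega^1$ afterwards. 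This is the step I expect to be the main obstacle: reconciling the Skorokhod topology on $\D$ with evaluation at the fixed jump times, and it is exactly why the $Y$-formulation of \lemref{Y_characterization} and \remref{Y_characterization_rem} was set up. Finally, lower semicontinuity of $P\mapsto\mathcal{J}(\omega^1,P)$ under weak $\Pc_2$-convergence — using (A1), (A3) and the moment bound — gives $\mathcal{J}(\omega^1,P_*)\le\liminf_n\mathcal{J}(\omega^1,P_n)=\inf_{R_{\rm M}(\omega^1,\lambda)}\mathcal{J}(\omega^1,\cdot)$, so $P_*\in R_{\rm M}^{\rm opt}(\omega^1,\lambda)\ne\varnothing$.

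\textbf{Step 3: Measurable selection and measurability of the value function.} To obtain the measurable map $\mathcal{H}$ in \eqref{mea_sele}, I would apply a Kuratowski–Ryll-Nardzewski / von Neumann–Aumann measurable selection theorem. Consider the set-valued map $\omega^1\mapsto R_{\rm M}^{\rm opt}(\omega^1,\lambda)\subset\Pc_2(\Omega)$. By Steps 1–2 its values are nonempty; one can also show they are closed (the limit argument above applied to a convergent sequence within $R_{\rm M}^{\rm opt}(\omega^1,\lambda)$). The key measurability input is that the graph $\{(\omega^1,P): P\in R_{\rm M}^{\rm opt}(\omega^1,\lambda)\}$ is Borel (or analytic) in $\Omega^1\times\Pc_2(\Omega)$: the graph of $\omega^1\mapsto R_{\rm M}(\omega^1,\lambda)$ is Borel because the martingale conditions in \defref{relaxed_control_pathwise}, tested against a countable dense family of $\phi\in C_b^2(\R^n)$ and rational time pairs, depend jointly measurably on $(\omega^1,P)$ — using \remref{topology}, which says $\omega^{1,n}\to\omega^1$ iff eventually equal, so joint continuity/measurability in $\omega^1$ reduces to the finite deterministic jump structure — and the map $(\omega^1,P)\mapsto\mathcal{J}(\omega^1,P)$ together with $\omega^1\mapsto v(\omega^1):=\inf_{P\in R_{\rm M}(\omega^1,\lambda)}\mathcal{J}(\omega^1,P)$ is measurable (the latter being an infimum of a jointly measurable function over a measurable-graph correspondence, hence universally measurable, and in fact Borel by the same countable-dense reduction). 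Intersecting with $\{(\omega^1,P):\mathcal{J}(\omega^1,P)\le v(\omega^1)\}$ shows the graph of $R_{\rm M}^{\rm opt}(\cdot,\lambda)$ is measurable, so a measurable selection $\mathcal{H}$ exists. Finally, measurability of $\omega^1\mapsto\inf_{P^{\omega^1}\in R_{\rm M}(\omega^1,\lambda)}\mathcal{J}(\omega^1,P^{\omega^1})=\mathcal{J}(\omega^1,\mathcal{H}(\omega^1))$ follows by composing the measurable selection with the jointly measurable cost functional, which completes the proof.
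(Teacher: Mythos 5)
Your proposal is correct and rests on the same two pillars as the paper: a compactification argument in the pathwise setting that exploits the $Y$-decomposition of \lemref{Y_characterization}/\remref{Y_characterization_rem} to tame the finitely many deterministic jump times, and a measurable-selection argument that exploits the essentially discrete topology of $\Omega^1$ (\remref{topology}). The differences are methodological rather than substantive. For existence, you run the direct method on a minimizing sequence and invoke lower semicontinuity of $P\mapsto\mathcal{J}(\omega^1,P)$, whereas the paper first proves that the whole set $R_{\rm M}(\omega^1,\lambda)$ is compact in $\Pc_2(\Omega)$ (\lemref{compactness}) and that $\mathcal{J}(\omega^1,\cdot)$ is continuous (\lemref{continuity}), and then concludes by a Weierstrass-type argument; your route is leaner for \propref{existence_pathwise} itself, but the paper's stronger lemmas are reused downstream (compactness and closedness of $R_{\rm M}^{\rm opt}(\omega^1,\lambda)$ feed into \lemref{measurable_selection}, and the same compactness/continuity machinery reappears in \lemref{set_continuity}, \thmref{value_continuity} and the MFG section), so establishing them once is what the paper buys. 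For the selection, you propose a Kuratowski--Ryll-Nardzewski/von Neumann--Aumann argument based on Borel (or analytic) measurability of the graph, which forces you to also justify measurability of the value map $v(\omega^1)$ via projection-type arguments; the paper instead shows the graph of the compact-valued map $\omega^1\mapsto R_{\rm M}^{\rm opt}(\omega^1,\lambda)$ is closed --- almost immediate since convergence in $\Omega^1$ means eventual equality --- and cites Theorem 12.1.10 of \cite{Stroock}, obtaining measurability of the value function as a corollary of the selection rather than as an input. Both selections are valid; just note that in your Step 3 the measurability of $v$ obtained by projection is a priori only universal/analytic, so if you want a genuinely Borel selection you must carry out the countable-dense reduction you allude to, whereas the paper's closed-graph route avoids this issue entirely.
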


To prove \propref{existence_pathwise}, we need the following auxiliary results:
\begin{lemma}\label{compactness}
	For any $\omega^1\in\Omega^1$, the set $\mathsf{R}(\omega^1)$ is a compact subset of $\Pc_2(\Omega)$.
\end{lemma}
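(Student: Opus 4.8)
Fix $\omega^1 \in \Omega^1$ and let $0 = t_0^{\omega^1} < t_1^{\omega^1} < \cdots < t_k^{\omega^1} \le t_{k+1}^{\omega^1} = T$ denote its finitely many jumping times. The plan is the standard two-part scheme: (a) relative compactness of $R_{\rm M}(\omega^1,\lambda)$ in $\Pc_2(\Omega)$, and (b) closedness of $R_{\rm M}(\omega^1,\lambda)$ under weak convergence. For (a), I would first establish tightness. The $\Lambda$-component lives in $\mathcal{Q}$, which is already compact since $U$ is compact Polish, so only the $X$-component needs attention. Using the martingale measure characterization of \lemref{moment_p_pathwise} together with the uniform moment bound \eqref{eq:momentp00} (which, crucially, is uniform over $R_{\rm M}(\omega^1,\lambda)$ because $\omega^1$ is frozen and $k = |D_{p^{\omega^1}}|$ is fixed), one controls the continuous martingale and drift parts on each interval $[t_i^{\omega^1}, t_{i+1}^{\omega^1})$ by an Aldous-type / Kurtz criterion, and the jump part is a fixed deterministic finite sum of jumps of size $\gamma(t_i^{\omega^1}, X_{t_i^{\omega^1}-}, \mu_{t_i^{\omega^1}-}^{\omega^1}, p^{\omega^1}(t_i^{\omega^1}))$ which, by (A4) and the moment bound, has uniformly controlled magnitude. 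This yields tightness in $\D$ under the Skorokhod topology; combined with the uniform $p$-th moment bound and $p > 2$, uniform integrability of $|X|^2 + d_{\mathcal Q}(\cdot,\cdot)^2$ upgrades relative compactness from $\Pc(\Omega)$ to $\Pc_2(\Omega)$.

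For (b), suppose $P_n^{\omega^1} \to P^{\omega^1}$ in $\Pc_2(\Omega)$ with each $P_n^{\omega^1} \in R_{\rm M}(\omega^1,\lambda)$. The initial-law condition (i) passes to the limit trivially (evaluation at $0$ is continuous on $\mathcal Q$-part and, although $\boldsymbol x \mapsto \boldsymbol x(0)$ is continuous on $\D$, one should argue via the martingale property at small times or note $X_0 = X_{0-}$; here $t=0$ is not a jump time so continuity at $0$ in Skorokhod topology does hold). For the martingale property (ii), I would show that for each $\phi \in C_b^2(\R^n)$, bounded continuous $\F_s$-measurable $g$, and $s \le t$,
\[
\E^{P^{\omega^1}}\!\left[ \big({\tt M}^{\omega^1,P^{\omega^1}}\phi(t) - {\tt M}^{\omega^1,P^{\omega^1}}\phi(s)\big)\, g \right] = 0,
\]
by passing to the limit in the corresponding identity for $P_n^{\omega^1}$. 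The drift term $\int_0^t \int_U \mathbb{L}\phi(s, X_s, \mu_s^{\omega^1}, u)\Lambda_s(\d u)\,\d s$ is handled using joint continuity from (A1), the Lipschitz bound (A2) on $\sigma\sigma^\T$, the convergence $\mu^{\omega^1}_{s,n} := P_n^{\omega^1}\circ X_s^{-1} \to \mu_s^{\omega^1}$ in $\Pc_2$ for a.e.\ $s$ (from $\Pc_2$-convergence of the laws plus the moment bound controlling the atoms at jump times), and the fact that this functional is continuous on $\Omega$ at $P^{\omega^1}$-a.e.\ path — here one must be careful that Skorokhod convergence $\boldsymbol x_n \to \boldsymbol x$ gives $\int_0^t h(\boldsymbol x_n(s))\,\d s \to \int_0^t h(\boldsymbol x(s))\,\d s$ for bounded continuous $h$ and for $t$ outside the (at most countable, here finite) set of discontinuities of $\boldsymbol x$. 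The jump term is a finite deterministic sum $\sum_{i:\, t_i^{\omega^1} \le t} \big(\phi(X_{t_i^{\omega^1}-} + \gamma(\cdots)) - \phi(X_{t_i^{\omega^1}-})\big)$; the subtlety is that $\boldsymbol x \mapsto \boldsymbol x(t_i^{\omega^1}-)$ is \emph{not} Skorokhod-continuous. I would circumvent this by exploiting that, in the weak* topology on $\Omega^1$, all approximating measures are built over the \emph{same} $\omega^1$ with the same jump times, and use the characterization in \lemref{Y_characterization}/\remref{Y_characterization_rem}: lift to $\tilde\Omega = \Omega \times \C$ where the continuous process $Y$ (the "de-jumped" version $Y_\cdot = X_\cdot - \int_0^\cdot\int_Z \gamma\,\omega^1(\d s,\d z)$) has Skorokhod-continuous (indeed uniformly continuous) evaluation, prove compactness and closedness there for $\tilde P^{\omega^1}$, and push forward. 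The test-function martingale problem for $Y$ involves only a continuous integrand, so the limiting argument goes through cleanly, and then relation (iii) of \lemref{Y_characterization} transfers the conclusion back to $X$.

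The main obstacle is precisely this discontinuity of left-limit evaluation under the Skorokhod topology, which is the reason the jump term cannot be handled by a naive continuous-mapping argument; the resolution is the change of variables to the continuous process $Y$ via \lemref{Y_characterization} together with \remref{Y_characterization_rem}, reducing everything to a jump-free martingale problem on $\tilde\Omega$ where all relevant functionals are continuous $\tilde P^{\omega^1}$-a.s. A secondary technical point is ensuring the conditional/unconditional law $\mu^{\omega^1}_{s,n} \to \mu^{\omega^1}_s$ in $\Pc_2(\R^n)$ for Lebesgue-a.e.\ $s \in [0,T]$ (not merely weakly, and not only off the jump times), which follows from $\Pc_2(\Omega)$-convergence of $P_n^{\omega^1}$ combined with the uniform moment estimate \eqref{eq:momentp00} and dominated convergence; one then invokes (A1)--(A4) to pass the coefficients through. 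Finally, once compactness is in hand, \propref{existence_pathwise} follows by lower semicontinuity of $P \mapsto \mathcal{J}(\omega^1,P)$ (again via (A1), (A3) and the uniform moment bound giving uniform integrability) together with a Kuratowski--Ryll-Nardzewski measurable selection applied to the closed-graph set-valued map $\omega^1 \mapsto R_{\rm M}^{\rm opt}(\omega^1,\lambda)$.
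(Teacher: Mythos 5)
Your proposal is correct and follows essentially the same route as the paper: lift to $\tilde\Omega=\Omega\times\C$ via \lemref{Y_characterization}/\remref{Y_characterization_rem} so that the martingale problem involves only the continuous de-jumped process $Y$, get tightness from the uniform moment bound of \lemref{moment_p_pathwise} together with compactness of $\mathcal{Q}$ and the fixed finite jump structure of $\omega^1$, and obtain closedness by passing to the limit in the jump-free martingale problem, with the convergence $\mu^{\omega^1,n}_{t_i^{\omega^1}-}\to\mu^{\omega^1}_{t_i^{\omega^1}-}$ in $\Pc_2(\R^n)$ (the paper's \lemref{t_i_convergence} and \lemref{measure_convergence_characterization}) handling exactly the left-limit subtlety you identify. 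The only cosmetic difference is that the paper proves tightness of the $X$-marginal via Kolmogorov's criterion for $Y$ plus an Arzel\`a--Ascoli argument for the deterministic jump part rather than an Aldous-type criterion, which does not change the substance.
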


\begin{proof}
To start with, define $\tilde{\mathsf{R}}(\omega^1):=\{\tilde P^{\omega^1}=P^{\omega^1}\circ (X,\Lambda,W,Y)^{-1};~P^{\omega^1}\in \mathsf{R}(\omega^1)\}$ (recall \remref{Y_characterization_rem}), and it only suffices to show that $\tilde{\mathsf{R}}(\omega^1)$ is a compact subset of $\Pc_2(\tilde\Omega)$. We first prove that $\tilde{\mathsf{R}}(\omega^1)$ is tight. In fact, {we recall that $(\tilde{X},\tilde{\Lambda},\tilde{W},\tilde{Y})=(\tilde{X}_t,\tilde{\Lambda}_t,\tilde{W}_t,\tilde{Y}_t)_{t\in[0,T]}$ are the coordinate processes introduced in \remref{Y_characterization_rem}. Then, it follows from} \lemref{moment_p_pathwise}  and \lemref{Y_characterization} that, there is a constant $C>0$ only depending on $(M,\lambda,T)$ such that
\begin{align*}
\E^{\tilde{P}^{\omega^1}}\left[\left|\tilde Y_t-\tilde Y_s\right|^p\right]\leq C|t-s|^{\frac{p}{2}},\quad\forall \tilde{P}^{\omega^1}\in \tilde{\mathsf{R}}(\omega^1).
\end{align*}
It follows from Kolmogorov's criterion that $\{\tilde{P}^{\omega^1}\circ \tilde{Y}^{-1};~\tilde{P}^{\omega^1}\in \tilde{\mathsf{R}}(\omega^1)\}$ is tight. 

On the other hand, recall that the c\`{a}dl\`{a}g continuity modulus $w_{\delta}'(\cdot)$ is defined by, for $\boldsymbol{x}\in \D$,
	\begin{align*}
		w_{\delta}'(\boldsymbol{x}):=\inf\left\{\max_{i\leq r}\sup_{s,t\in  [s_{i-1},s_i)}|\boldsymbol{x}(t)-\boldsymbol{x}(s)|;~0=s_0<\cdots<s_r=T,~\inf_{i<r}(t_i-t_{i-1})\geq \delta,~{r\geq1}\right\}.
	\end{align*}
If we define that, for $\tilde{\omega}\in\tilde{\Omega}$,
\begin{align*}
Z_t(\tilde\omega):=\int_0^t\int_Z\gamma(s,\tilde X_{s-},\tilde\mu_{s-}^{\omega^1},z)\omega^1(\d s,\d z)=\sum_{t_i^{\omega^1}\leq t}\gamma(t_i^{\omega^1},\tilde{X}_{t_i^{\omega^1}},\mu_t^{\omega^1},p^{\omega^1}(t_i^{\omega^1}))
\end{align*}
with $(t_i^{\omega^1})_{i=1}^k$ being the jump times of $\omega^1\in\Omega^1$, then we have
\begin{align*}
w_{\delta}'(Z_{\cdot}(\tilde\omega))\leq\max_{i\leq k}\sup_{\substack{s,t\in [t_{i-1}^{\omega^1},t_i^{\omega^1})\\|t-s|\leq\delta}}|Z_t(\tilde\omega)-Z_s(\tilde\omega)|=0,\quad {\rm whenever}~\delta<\min_{i}|t_i^{\omega^1}-t_{i-1}^{\omega^1}|.
\end{align*} 
{Furthermore, in light of \eqref{eq:momentp00} and the growth condition in \assref{ass1}-(A4) imposed on the jump coefficient $\gamma(\cdot)$, we can easily derive that $\sup_{\tilde P^{\omega^1}\in\tilde{\mathsf{R}}(\omega^1)}\E^{\tilde P^{\omega^1}}[\sup_{t\in [0,T]}|Z_t|^2]<\infty$. This implies that  
\begin{align*}
\lim_{a\to\infty}\sup_{\tilde P^{\omega^1}\in\tilde{\mathsf{R}}(\omega^1)}\tilde P^{\omega^1}\left(\sup_{t\in [0,T]}|Z_t|>a\right)=0.
\end{align*}
By the tightness criterion for c\`{a}dl\`{a}g processes (c.f. Theorem 13.2 in \cite{Billingsley}), it follows that $\{\tilde{P}^{\omega^1}\circ Z^{-1};~\tilde{P}^{\omega^1}\in \tilde{\mathsf{R}}(\omega^1)\}$ is tight. Hence, the joint distribution $\{\tilde{P}^{\omega^1}\circ (\tilde Y,Z)^{-1};~\tilde{P}^{\omega^1}\in \tilde{\mathsf{R}}(\omega^1)\}$ is also tight. Then, there exists a compact set $K^{\epsilon}\subset \C\times\D$ such that $\inf_{\tilde P^{\omega^1}\in\tilde{\mathsf{R}}(\omega^1)}\tilde P^{\omega^1}((\tilde Y,Z)\in K^{\epsilon})\geq 1-\epsilon$. Thanks to the continuity of the mapping $h:\C\times\D\to\D$ defined by $(\boldsymbol{y},\boldsymbol{z})\mapsto \boldsymbol{y}+\boldsymbol{z}$, the image $h(K^{\epsilon})$ is compact in $\D$. It then holds by definition that
\begin{align*}
\tilde P^{\omega^1}\left(\tilde X\in h(K^{\epsilon})\right)\geq \tilde P^{\omega^1}\left((\tilde Y,Z)\in K^{\epsilon}\right)\geq 1-\epsilon, \quad\forall \tilde P^{\omega^1}\in\tilde{\mathsf{R}}(\omega^1),
\end{align*}
which gives the tightness of $\{\tilde{P}\circ \tilde{X}^{-1};~\tilde{P}^{\omega^1}\in \tilde{\mathsf{R}}(\omega^1)\}$.
}

Lastly, note that $\mathcal{Q}$ is compact, and hence $\{\tilde{P}^{\omega^1}\circ (\tilde{\Lambda},\tilde W)^{-1};~\tilde{P}^{\omega^1}\in \tilde{\mathsf{R}}(\omega^1)\}$ is also tight. The $p$-order moment estimate provided in \lemref{moment_p_pathwise} can upgrade this tightness to precompactness in $\Pc_2(\tilde\Omega)$ (c.f. Proposition 5.2 in Lacker \cite{Lacker2}). 
	
Now, we are left to check the closedness of $\tilde{\mathsf{R}}(\omega^1)$. To do it, let $\tilde P_n^{\omega^1}\in\tilde{\mathsf{R}}(\omega^1)$ with $\tilde P_n^{\omega^1}\to \tilde P^{\omega^1}$ in $\Pc_2(\tilde\Omega)$ as $n\to\infty$. Then, we need to verify that $\tilde P^{\omega^1}\in \tilde{\mathsf{R}}(\omega^1)$. We follow the argument used in the proof of Lemma 3.7 in \cite{BWY1} to verify the condition given by  \remref{Y_characterization_rem}-\textnormal{(i)}. Our first step is to show that the following set
\begin{align*}
\mathcal{E}^{\tilde P^{\omega^1}}:=\left\{ \tilde\omega \in \tilde\Omega;~\tilde{X}_{\cdot} = \tilde{Y}_{\cdot} + \int_0^{\cdot}\int_Z \gamma\left(s, \tilde{X}_{s-}, \tilde\mu_{s-}^{\omega^1},z\right) \, \omega^1(\d z,\d s) \right\}
\end{align*}
is closed in $ \tilde\Omega $ with $\tilde\mu_{t-}^{\omega^1}=\tilde P^{\omega^1}\circ \tilde{X}_{t-}^{-1}$. Assume that $\tilde\omega_n=(\boldsymbol{x}_n,q_n,\boldsymbol{w}_n,\boldsymbol{y}_n)\in\mathcal{E}^{\tilde P^{\omega^1}}$ converges to $\tilde\omega=(\boldsymbol{x},q,\boldsymbol{w},\boldsymbol{y})$ in $\tilde\Omega$ (with the product metric defined in \remref{Y_characterization_rem}) as $n\to\infty$, and we need to prove that $\tilde\omega\in\mathcal{E}^{\tilde P^{\omega^1}}$. In fact, we have from the definition that
\begin{align*}		\boldsymbol{x}_n(t)=\boldsymbol{y}_n(t)+\sum_{t_i^{\omega^1}\leq t}\gamma\left(t_i^{\omega^1},\boldsymbol{x}_n(t_i^{\omega^1}-),\mu_{t_i^{\omega^1}-},p^{\omega^1}(t_i^{\omega^1})\right),\quad\forall t\in [0,T].
\end{align*}
As a result, we deduce that $ \boldsymbol{x}_n(t) = \boldsymbol{y}_n(t) $ for all $ t \in [0, t_1^{\omega^1})$, and accordingly $\boldsymbol{x}_n(t_1^{\omega^1}-) = \boldsymbol{y}_n(t_1^{\omega^1})$ by using the continuity of $t\to \boldsymbol{y}_n(t)$. Since $(\boldsymbol{x}_n,\boldsymbol{y}_n)\to (\boldsymbol{x},\boldsymbol{y})$ in $\D\times\C$ as $n\to\infty$, it follows that
\begin{align*}
\lim_{n \to \infty} \boldsymbol{x}_n(t_1^{\omega^1}-) = \boldsymbol{y}(t_1^{\omega^1}),\quad \lim_{n\to\infty}\boldsymbol{x}_n(t)=\boldsymbol{y}(t),\quad \forall t\in [0,t_1^{\omega^1}).
\end{align*}

Proceeding by induction, we obtain that
\begin{align*}
\lim_{n \to \infty} \boldsymbol{x}_n(t_i^{\omega^1}-)& = \boldsymbol{y}(t_i^{\omega^1})+\sum_{j=1}^{i-1}\gamma \left(t_j^{\omega^1},\lim_{n\to\infty}\boldsymbol{x}_n(t_{j}^{\omega^1}-),\mu_{t_j^{\omega^1}}-,p^{\omega^1}(t_j^{\omega^1})\right),\\
\lim_{n\to\infty}\boldsymbol{x}_n(t)&=\boldsymbol{y}(t)+\sum_{j=1}^{i-1}\gamma \left(t_j^{\omega^1},\lim_{n\to\infty}\boldsymbol{x}_n(t_{j}^{\omega^1}-),\mu_{t_j^{\omega^1}}-,p^{\omega^1}(t_j^{\omega^1})\right),\quad \forall t\in [t_{i-1}^{\omega^1},t_{i}^{\omega^1})
\end{align*}
with the convention $\sum_{j=1}^0=0$. Moreover, one can easily verify by induction that the above convergence holds uniformly in $t$ as $(\boldsymbol{x}_n,\boldsymbol{y}_n)\to (\boldsymbol{x},\boldsymbol{y})$ in $\D\times\C$ as $n\to\infty$. Next, let $\boldsymbol{z}$ be the pointwise limit of $\boldsymbol{x}_n$ as $n\to\infty$, i.e. $\boldsymbol{z}(t):=\lim_{n\to\infty}\boldsymbol{x}_n(t)$ for $t\in[0,T]$. Note that $\lim_{n\to\infty}\boldsymbol{x}_n(T)=\lim_{n\to\infty}\boldsymbol{x}_n(T-)$ if $t_k^{\omega^1}<T$. Consequently, $(\boldsymbol{z},q,{\boldsymbol{w}},\boldsymbol{y})\in\mathcal{E}^{\tilde P^{\omega^1}}$. By construction, we also have $\|\boldsymbol{z}-\boldsymbol{x}_n\|_{\infty}\to 0$ as $n\to\infty$, which yields that $d_{\D}(\boldsymbol{x}_n,\boldsymbol{z})\to 0$ as $n\to\infty$. Hence, we derive $\boldsymbol{z}=\boldsymbol{x}$, and thus $\tilde\omega\in\mathcal{E}^{\tilde{P}^{\omega^1}}$. 
	
We next prove that 
\begin{align}\label{measure_convergence}
\limsup_{n\to\infty}\tilde P_n^{\omega^1}\left(\mathcal{E}^{\tilde{P}_n^{\omega^1}}\backslash\mathcal{E}^{\tilde{P}^{\omega^1}}\right)=0,
\end{align}
where the set $\mathcal{E}^{\tilde{P}_n^{\omega^1}}$ is defined by
\begin{align*}
\mathcal{E}^{{\tilde{P}_n^{\omega^1}}}:=\left\{ \tilde\omega \in \tilde\Omega;~ \tilde{X}_{\cdot} = \tilde{Y}_{\cdot} + \int_0^{\cdot}\int_Z \gamma\left(s, \tilde{X}_{s-}, \tilde\mu_{s-}^{\omega^1,n},z\right)\omega^1(\d s,\d z)\right\},\quad \tilde\mu_{t-}^{\omega^1,n}:=\tilde{P}_n^{\omega^1}\circ\tilde X_{t-}^{-1}.
\end{align*}
Consider $\tilde\omega\in\mathcal{E}^{\tilde{P}_n^{\omega^1}}\backslash\mathcal{E}^{\tilde{P}^{\omega^1}}$. Then, there exists some $t_0\in [0,T]$ such that
\begin{align}\label{delta_set}
\begin{cases}
\displaystyle \tilde X_{t_0}(\tilde\omega)\neq \tilde Y_{t_0}(\tilde\omega)+\sum_{t_i^{\omega^1}\leq t_0}\gamma\left(t_i^{\omega^1},\tilde X_{t_i^{\omega^1}-},\tilde\mu^{\omega^1}_{t_i^{\omega^1}-},p^{\omega^1}(t_i^{\omega^1})\right),\\[0.6em]
\displaystyle \tilde X_{t_0}(\tilde\omega)=\tilde Y_{t_0}(\tilde\omega)+\sum_{t_i^{\omega^1}\leq t_0}\gamma\left(t_i^{\omega^1},\tilde X_{t_i^{\omega^1}-},\tilde\mu^{\omega^1,n}_{t_i^{\omega^1}-},p^{\omega^1}(t_i^{\omega^1})\right).
\end{cases}
\end{align}
By using \lemref{t_i_convergence}, we have $\tilde\mu^{\omega^1,n}_{t_i^{\omega^1}-}\to\tilde\mu^{\omega^1}_{t_i^{\omega^1}-}$ in $\Pc_2(\R^n)$ as $n\to\infty$. We can thus conclude that $\equref{delta_set}$ can not hold for $n$ large enough since the uniform continuity of $\gamma(\cdot)$ in $\mu\in\R^n$ (\assref{ass1}-{\rm (A2)}). In other words, $\mathcal{E}^{\tilde{P}_n^{\omega^1}}\backslash\mathcal{E}^{\tilde{P}^{\omega^1}}$ is empty when $n$ is large enough, and hence \equref{measure_convergence} holds. Accordingly, we arrive at
\begin{align*}
1=\lim_{n\to\infty}\tilde{P}_n^{\omega^1}\left(\mathcal{E}^{\tilde P_n^{\omega^1}}\right)&=\limsup_{n\to\infty}\tilde{P}_n^{\omega^1}\left(\mathcal{E}^{\tilde{P}_n^{\omega^1}}\backslash\mathcal{E}^{\tilde{P}^{\omega^1}}\right)+\limsup_{n\to\infty}\tilde{P}_n^{\omega^1}\left(\mathcal{E}^{\tilde{P}^{\omega^1}}\right)\nonumber\\
&\leq0+\tilde{P}^{\omega^1}\left(\mathcal{E}^{\tilde{P}^{\omega^1}}\right)=\tilde{P}^{\omega^1}\left(\mathcal{E}^{\tilde{P}^{\omega^1}}\right),
\end{align*}
which verifies the validity of \remref{Y_characterization_rem}-(i). Here, in the 2nd equality, we used the fact that $\tilde{P}_n^{\omega^1}(\mathcal{E}^{\tilde{P}_n^{\omega^1}}\backslash\mathcal{E}^{\tilde{P}^{\omega^1}})+\tilde{P}_n^{\omega^1}(\mathcal{E}^{\tilde{P}^{\omega^1}})=\tilde{P}_n^{\omega^1}(\mathcal{E}^{\tilde{P}_n^{\omega^1}})$ since $\tilde{P}_n(\mathcal{E}^{\tilde{P}_n^{\omega^1}})=1$; while we applied Portmaneau Theorem in the 3rd inequality. 
	
The initial condition  in \remref{Y_characterization_rem}-(ii) is straightforward to verify. We now turn to establishing the martingality condition given in \remref{Y_characterization_rem}-(iii). Following the proof of Theorem 3.7 in {Haussmann and Lepeltier} \cite{Haussmann}, we can derive that, for any $t\in[0,T]$ and $\phi\in C_b^2(\R^n\times\R^n)$, $\tilde {\tt M}^{\omega^1,\tilde P^{\omega^1}}\phi(t)$ is continuous in $\tilde\omega\in\tilde\Omega$. Hence, for any $0\leq s<t<T$ and bounded $\tilde\F_s$-measurable r.v. $\tilde h$, it holds that 
\begin{align*}
\lim_{n\to\infty}\E^{\tilde P_n^{\omega^1}}\left[\left(\tilde {\tt M}^{\omega^1,\tilde P^{\omega^1}}\phi(t)-\tilde {\tt M}^{\omega^1,\tilde P^{\omega^1}}\phi(s)\right)\tilde h\right]=\E^{\tilde P^{\omega^1}}\left[\left(\tilde {\tt M}^{\omega^1,\tilde P^{\omega^1}}\phi(t)-\tilde {\tt M}^{\omega^1,\tilde P^{\omega^1}}\phi(s)\right)\tilde h\right],
\end{align*}
since ${\tt M}^{\omega^1,\tilde P^{\omega^1}}\phi(t)$ has at most quadratic growth due to \assref{ass1}-(A3) and $\tilde P_n^{\omega^1}\to\tilde P^{\omega^1}$ in $\Pc_2(\tilde\Omega)$ as $n\to\infty$.
	
On the other hand, thanks to the Lipschitz continuity of $(b,\sigma)$ in $\mu\in\R^n$ (c.f. \assref{ass1}-(A2)) and \lemref{measure_convergence_characterization}, we have
\begin{align*}
\lim_{n\to\infty}\sup_{(t,\tilde\omega)\in[0,T]\times\tilde\Omega}\left|\tilde {\tt M}^{\omega^1,\tilde P^{\omega^1}}\phi(t)-\tilde {\tt M}^{\omega^1,\tilde P_n^{\omega^1}}\phi(t)\right|=0.
\end{align*}
Lastly, we can conclude that
\begin{align*}
&\E^{\tilde P^{\omega^1}}\left[\left(\tilde {\tt M}^{\omega^1,\tilde P^{\omega^1}}\phi(t)-\tilde {\tt M}^{\omega^1,\tilde P^{\omega^1}}\phi(s)\right)\tilde h\right]=\lim_{n\to\infty}\E^{\tilde P_n^{\omega^1}}\left[\left(\tilde {\tt M}^{\omega^1,\tilde P^{\omega^1}}\phi(t)-\tilde {\tt M}^{\omega^1,\tilde P^{\omega^1}}\phi(s)\right)\tilde h\right]\\
&\quad=\lim_{n\to\infty}\E^{\tilde P_n^{\omega^1}}\left[\left(\tilde {\tt M}^{\omega^1,\tilde P_n^{\omega^1}}\phi(t)-\tilde {\tt M}^{\omega^1,\tilde P_n^{\omega^1}}\phi(s)\right)\tilde h\right]+\lim_{n\to\infty}\E^{\tilde P_n^{\omega^1}}\left[\left(\tilde {\tt M}^{\omega^1,\tilde P^{\omega^1}}\phi(t)-\tilde {\tt M}^{\omega^1,\tilde P_n^{\omega^1}}\phi(t)\right)\tilde h\right]\\
&\qquad+\lim_{n\to\infty}\E^{\tilde P_n^{\omega^1}}\left[\left(\tilde {\tt M}^{\omega^1,\tilde P^{\omega^1}}\phi(s)-\tilde {\tt M}^{\omega^1,\tilde P_n^{\omega^1}}\phi(s)\right)\tilde h\right]=0,
\end{align*}
where, in the last equality, we have used the martingal property of ${\tt M}^{\omega^1,\tilde P_n^{\omega^1}}$ under $\tilde P_n^{\omega^1}$. Putting all pieces together, we have established the desired compactness of $\tilde{\mathsf{R}}(\omega^1)$.
\end{proof}

\begin{lemma}\label{continuity}
	For any $\omega^1\in\Omega^1$,  the pathwise cost functional $\mathcal{J}(\omega^1,P)$ defined by \eqref{value_pathwise} is continuous in $P\in\Pc_2(\Omega)$. As a result, $\mathsf{R}^{\rm opt}(\omega^1)$ is a compact nonempty subset of $\Pc_2(\Omega)$.
\end{lemma}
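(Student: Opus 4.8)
\emph{Strategy.} The plan is to prove first that $P\mapsto\mathcal{J}(\omega^1,P)$ is continuous on $\Pc_2(\Omega)$; the remaining claims then come for free, since $R_{\rm M}(\omega^1,\lambda)$ is compact by \lemref{compactness} and nonempty (as noted after \lemref{moment_p_pathwise}), so a continuous $\mathcal{J}(\omega^1,\cdot)$ attains its infimum in \eqref{eq:RMopt}, whence $R_{\rm M}^{\rm opt}(\omega^1,\lambda)\neq\varnothing$, and this set, being the intersection of the compact set $R_{\rm M}(\omega^1,\lambda)$ with the closed level set $\{P:\mathcal{J}(\omega^1,P)=\min_{R_{\rm M}(\omega^1,\lambda)}\mathcal{J}(\omega^1,\cdot)\}$, is compact. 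For the continuity, I would fix $\omega^1$ and a sequence $P_n\to P$ in $\Pc_2(\Omega)$ and aim at $\mathcal{J}(\omega^1,P_n)\to\mathcal{J}(\omega^1,P)$. The first routine step is to record the quadratic growth bound $|f(t,x,\mu,u)|\le C(1+|x|^2+M_2(\mu)^2)$, obtained by combining the joint continuity of $f$ (\assref{ass1}-(A1)) with \assref{ass1}-(A3) evaluated at $(x,\mu)=(0,\delta_0)$; this will furnish the domination everywhere below.

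\emph{Step 1: uniform integrability and convergence of the marginal flows.} Convergence in the $2$-Wasserstein metric on $\Pc_2(\Omega)$ entails uniform integrability of $\{d_{\Omega}(\cdot,\omega_0)^2\}$ with respect to $(P_n)_n$, for $\omega_0=(\boldsymbol 0,q_0)$; since $d_{\Omega}(\omega,\omega_0)\ge d_{\D}(\boldsymbol x,\boldsymbol 0)=\sup_{t}|X_t(\omega)|$ (every time change fixes the constant path), the family $\{\sup_t|X_t|^2\}$ is uniformly integrable w.r.t. $(P_n)_n$; in particular $C':=\sup_n\E^{P_n}[\sup_t|X_t|^2]<\infty$, so $\sup_{n,t}M_2(\mu^{P_n}_t)^2\le C'$ and $\sup_t M_2(\mu^{P}_t)^2\le C'$, where $\mu^Q_t:=Q\circ X_t^{-1}$. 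Next, the set $T_P\subset[0,T]$ of fixed times of discontinuity of $P$ is at most countable, and for $t\notin T_P\cup\{T\}$ the evaluation $\omega\mapsto X_t(\omega)$ is $P$-a.s.\ continuous, so the continuous mapping theorem yields $\mu^{P_n}_t\to\mu^{P}_t$ weakly, which, together with the uniform integrability of $|X_t|^2$, upgrades to $\mu^{P_n}_t\to\mu^{P}_t$ in $\Pc_2(\R^n)$ for all $t\notin T_P\cup\{T\}$.

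\emph{Step 2: passing to the limit.} As $\Omega$ is Polish, I would invoke the Skorokhod representation theorem to get, on some $(\hat\Omega,\hat{\mathcal F},\hat P)$, $\Omega$-valued maps $(\mathbf X_n,\mathbf Q_n)$ and $(\mathbf X,\mathbf Q)$ with laws $P_n$ and $P$ such that $(\mathbf X_n,\mathbf Q_n)\to(\mathbf X,\mathbf Q)$ $\hat P$-a.s., with the uniform integrability of $\sup_t|X_t|^2$ transferring to $\{\sup_t|\mathbf X_n(t)|^2\}$ under $\hat P$. Setting $g_n(t,u):=f(t,\mathbf X_n(t),\mu^{P_n}_t,u)$ and $g(t,u):=f(t,\mathbf X(t),\mu^{P}_t,u)$, so that $\mathcal{J}(\omega^1,P_n)=\E^{\hat P}[\int_0^T\!\int_U g_n\,\mathbf Q_n(\mathrm dt,\mathrm du)]$ and likewise for $P$, I would show that for $\hat P$-a.e.\ $\hat\omega$ the inner integrals converge: for every $t$ outside the countable set $T_P\cup\{T\}$ that is in addition a continuity point of $\mathbf X$, one has $\mathbf X_n(t)\to\mathbf X(t)$ and $\mu^{P_n}_t\to\mu^{P}_t$, hence $\sup_u|g_n(t,u)-g(t,u)|\to0$ by joint continuity of $f$ and compactness of $U$; splitting $\int g_n\,\mathrm d\mathbf Q_n-\int g\,\mathrm d\mathbf Q=\int(g_n-g)\,\mathrm d\mathbf Q_n+\int g\,\mathrm d(\mathbf Q_n-\mathbf Q)$, the first piece is at most $\int_0^T\sup_u|g_n(t,u)-g(t,u)|\,\mathrm dt\to0$ by dominated convergence (the integrand being dominated, for fixed $\hat\omega$, by the finite constant $C(2+\sup_m\|\mathbf X_m\|_\infty^2+\|\mathbf X\|_\infty^2+2C')$, as a $J_1$-convergent sequence is bounded in sup norm), while the second piece tends to $0$ because $\mathbf Q_n\to\mathbf Q$ in $\mathcal Q$ and $g$ is a bounded Carath\'eodory integrand, using the standard fact that weak convergence of relaxed controls with common first marginal equal to Lebesgue measure forces convergence of integrals against such integrands. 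This gives $\int_0^T\!\int_U g_n\,\mathrm d\mathbf Q_n\to\int_0^T\!\int_U g\,\mathrm d\mathbf Q$ $\hat P$-a.s., and since $|\int g_n\,\mathrm d\mathbf Q_n|\le CT(1+\sup_t|\mathbf X_n(t)|^2+C')$ is uniformly integrable under $\hat P$, Vitali's theorem upgrades this to $\mathcal{J}(\omega^1,P_n)\to\mathcal{J}(\omega^1,P)$.

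\emph{Main obstacle.} I expect Step 2 to be the crux. The integrand $f(t,X_t,\mu_t,u)$ is discontinuous in $t$ at the jumps of $X$, is evaluated along the $P$-dependent flow $\mu^P$, and is integrated against the moving relaxed control $\mathbf Q_n$, so $\mathcal{J}(\omega^1,\cdot)$ is \emph{not} the expectation of one fixed continuous functional; moreover pointwise convergence of the integrands plus uniform integrability is not enough to pass to the limit when the underlying measures $P_n$ also vary. The three devices I would lean on are the Skorokhod coupling (to trade weak convergence for a.s.\ convergence), the at-most-countability of the fixed discontinuity times of $P$ together with the a.s.\ countability of the jumps of $\mathbf X$ (so that $X_t$ and $\mu^{P_n}_t$ both converge for Lebesgue-a.e.\ $t$, which is all the $\mathrm dt$-integral sees), and the stable convergence of the relaxed controls $\mathbf Q_n$ against Carath\'eodory integrands; the quadratic growth of $f$ is then absorbed precisely by the uniform integrability produced by the $2$-Wasserstein convergence.
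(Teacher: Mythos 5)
Your proof is correct, and while the underlying ingredients overlap with those of the paper, the proof architecture is genuinely different. The paper splits $|\mathcal{J}(\omega^1,P^n)-\mathcal{J}(\omega^1,P)|\le I_1^n+I_2^n$, where $I_1^n$ changes only the integrating measure $P^n\to P$ while keeping the \emph{fixed} limiting flow $\boldsymbol{\mu}=(P\circ X_t^{-1})_t$ inside $f$, and $I_2^n$ changes only the flow $\boldsymbol{\mu}^n\to\boldsymbol{\mu}$ under $P^n$. For $I_1^n$ it invokes the $\omega$-continuity of the functional $\omega\mapsto\int\!\int f(t,\boldsymbol{x}(t),\mu_t,u)\,q(t,\mathrm du)\mathrm dt$ (citing Lemma~3.5 of Haussmann and Suo) combined with the quadratic growth from (A3) to pass to the $\Pc_2$-limit; for $I_2^n$ it applies (A3) directly to bound it by $M\int_0^T\mathcal W_{2,\R^n}(\mu_t^n,\mu_t)^2\,\mathrm dt$, whose vanishing is exactly the companion \lemref{measure_convergence_characterization}. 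You instead process both variations simultaneously via a single Skorokhod representation, pointwise convergence at the co-countably many continuity times, the Jacod--M\'emin/Young-measure fact that weak convergence of relaxed controls with frozen first marginal implies convergence against bounded Carath\'eodory integrands, and a Vitali step for the outer expectation. Your route is more self-contained (you are essentially re-deriving inline what the paper delegates to Haussmann--Suo and to \lemref{measure_convergence_characterization}), at the cost of being longer and of leaning on the stable-convergence fact, which you should cite explicitly (Jacod--M\'emin \cite{Jacod}, which is already in the paper's bibliography). The paper's decomposition buys modularity and brevity, and keeps the change-of-flow step visibly quantitative through (A3); your version buys transparency about exactly where the a.e.-in-$t$ convergence, the frozen marginal of the relaxed controls, and the uniform integrability from $\Pc_2$ convergence enter.
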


\begin{proof}
Following the proof of Lemma 3.5 in Haussmann and Suo \cite{Haussmann1}, we can show that, as $\omega_n=(\boldsymbol{x}_n,q_n,\boldsymbol{w}_n)\to\omega=(\boldsymbol{x},q,\boldsymbol{w})$ in $\Omega$ under the metric $d_{\Omega}$ when $n\to\infty$, the following convergence holds that, for any c\`{a}dl\`{a}g measure flow $\boldsymbol{\mu}=(\mu_t)_{t\in[0,T]}\in D([0,T];\Pc_2(\R^n))$,
\begin{align}\label{f_convergence}
\lim_{n\to\infty}\int_0^T\int_Uf(t,\boldsymbol{x}_n(t),\mu_t,u)q_n(t,\d u)\d t&=\int_0^T\int_Uf(t,\boldsymbol{x}(t),\mu_t,u)q(t,\d u)\d t,\\
\lim_{n\to\infty}g(\boldsymbol{x}_n(T),\mu_T)&=g(\boldsymbol{x}(T),\mu_T).\label{g_convergence}
	\end{align}
Suppose that $P^n\to P$ in $\Pc_2(\Omega)$ as $n\to\infty$. It holds that
\begin{align*}
&\left|\mathcal{J}(\omega^1,P^n)-\mathcal{J}(\omega^1,P)\right|\nonumber\\
&\quad\leq\left|\E^{P}\left[\int_0^T\int_Uf(t,X_t,\mu_t,u)\Lambda_t(\d u)\d t\right]-\E^{P^n}\left[\int_0^T\int_Uf(t,X_t,\mu_t,u)\Lambda_t(\d u)\d t\right]\right|\\
&\qquad+\E^{P^n}\left[\int_0^T\int_U\left|f(t,X_t,\mu_t,u)-f(t,X_t,\mu_t^n,u)\right|\Lambda_t(\d u)\d t\right]\nonumber\\
&\qquad{+\left|\E^{P^n}[g(X_T,\mu_T)]-\E^P[g(X_T,\mu_T)]\right|+\E^{P^n}\left[\left|g(X_T,\mu_T)-g(X_T,\mu_T^n)\right|\right]}\\
&=:I_1^n+I_2^n+{I_3^n+I_4^n}
\end{align*}
with $\mu_t^n=P^n\circ X_t^{-1}$ and $\mu_t=P\circ X_t^{-1}$ for $t\in[0,T]$. Thanks to \equref{f_convergence}, \eqref{g_convergence}, at most quadratic growth of $\int_0^T\int_U f(t,X_t,\mu_t,u)\Lambda_t(\d u)\d t$ and $g(X_T,\mu_T)$ in $\omega \in \Omega$ from \assref{ass1}-(A3), we can conclude that $I_1^n,I_3^n \to 0$ as $n\to\infty$. 
    
On the other hand, using \assref{ass1}-(A3) again, we have 
\begin{align*}
I_2^n\leq M\E^{P_n}\left[\int_0^T\mathcal{W}_{2,\R^n}(\mu_t^n,\mu_t)^2\d t\right]=M\int_0^T\mathcal{W}_{2,\R^n}(\mu_t^n,\mu_t)^2\d t.
\end{align*}
The R.H.S. of the above result converges to $0$ as $n\to\infty$ by applying \lemref{measure_convergence_characterization} together with the assumption that $P^n\to\ P$ in $\Pc_2(\Omega)$ as $n\to\infty$. {For the term $I_4^n$, as $\delta(T) = T$ holds for every $\delta \in \Delta$ (see \eqref{Skorokhod_metric} for the definition of $\Delta$), we deduce that, for any $\delta\in\Delta$ and $\boldsymbol{x},\boldsymbol{y}\in\D$, $|\boldsymbol{x}(T) - \boldsymbol{y}(T)| = |\boldsymbol{x}(\delta(T)) - \boldsymbol{y}(T)| \leq \|\boldsymbol{x} \circ \delta - \boldsymbol{y}\|_{\infty}$. Taking the infimum over all $\delta \in \Delta$ on both sides of the this inequality, we obtain $|\boldsymbol{x}(T) - \boldsymbol{y}(T)| \leq d_{\mathcal{D}}(\boldsymbol{x}, \boldsymbol{y})$. By the continuous mapping theorem, one has $P^n\circ X_T^{-1}\to P\circ X_T^{-1}$ in $\Pc_2(\Omega)$ as $n\to\infty$, and hence $I_4^n\to 0$ as $n\to\infty$.}

So far, we have shown that, for any $\omega^1\in \Omega^1$, $P\to\mathcal{J}(\omega^1,P)$ is continuous in $\Pc_2(\Omega)$. Thus, it follows from \lemref{compactness} that $\mathsf{R}(\omega^1)$ is compact, and hence $\mathcal{J}(\omega^1,\lambda)$ admits a minimum $\mathsf{R}(\omega^1)$, which ensures that $\mathsf{R}^{\rm opt}(\omega^1)$ is nonempty. One can easily verify that  $\mathsf{R}^{\rm opt}(\omega^1)$ is a closed subset of $\mathsf{R}(\omega^1)$, and hence it is also compact. The proof is then complete.
\end{proof}

For a set valued mapping $\mathcal{K}:X\to 2^{Y}$ (the power set of $Y$), let us define its graph $\Gr(\mathcal{K})$ as \begin{align}\label{grpah}
	\Gr(\mathcal{K})=\{(x,y)\in X\times Y;~x\in X,~ y\in \mathcal{K}(x)\}.
\end{align}
{
We next provide the following technical lemma.
\begin{lemma}\label{measurable_mapping}
Let $X$ and $Y$ be two Polish spaces equipped with their corresponding $\sigma$-algebras $\F_X$ and $\F_Y$.  Let $R:X\rightrightarrows Y$ be a set-valued mapping with closed value such that its graph $\Gr(R)$ is $\F_X\otimes\F_Y$-measurable. Assume that $f:X\times Y\to\R$ is $\F_X\otimes\F_Y$-measurable and is continuous in $y\in Y$. Then, the graph of the minimizer correspondence $R^{\rm opt}:=\{y\in R(x);~f(x,y)=\inf_{y\in R(x)}f(x,y)\}$ is also $\F_X\otimes\F_Y$-measurable.
\end{lemma}

\begin{proof}
Note that $\Gr(R)$ is $\F_X\otimes\F_Y$-measurable, by the Castaing representation theorem (c.f. Theorem 14.5 in
Rockafellar and Wets \cite{Rockafellar}), there exists a countable family of measurable selections $\{h_n\}_{n\geq1}$ such that $h_n(x)\in R(x)$ for all $x\in X$ and $n\geq1$, and $R(x)=\overline{\{h_n(x);~n\geq1\}}$ for $x\in X$. From the continuity of $y\to f(x,y)$, it follows that $\inf_{y\in R(x)}f(x,y)=\inf_{n\geq1} f(x,h_n(x))$. In view that $f$ is jointly measurable and $h_n$ for $n\geq1$ are measurable, $x\mapsto f(x,h_n(x))$ is $\F_X$-measurable. As the infimum of a countable family of measurable functions, $x\mapsto \inf_{y\in R(x)}f(x,y)$ is therefore $\F_X$ measurable. Consequently, the graph $\Gr(R^{\mathrm{opt}})= \{(x,y)\in X\times Y;~f(x,y)-\inf_{y'\in R(x)} f(x,y') = 0\}$ is $\F_X\otimes\F_Y$-measurable as the pre-image of $\{0\}$ under a jointly measurable function.
\end{proof}

In the next result, we establish the joint measurability of the graph $\Gr (\mathsf{R}^{\rm opt})$ for the set $\mathsf{R}^{\rm opt}$ of optimal relaxed controls.

\begin{lemma}\label{measurable_selection}
The graph of the (compact) set valued mapping $\omega^1\to \mathsf{R}^{\rm opt}(\omega^1)$ is $\F^1\otimes\mathcal{B}(\Pc_2(\Omega))$-measurable.
\end{lemma}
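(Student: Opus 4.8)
The plan is to exploit the essentially discrete nature of the sequential topology on $\Omega^1$ recorded in \remref{topology}, which collapses the $\omega^1$-dependence and reduces the closedness of the graph to the closedness of $R_{\rm M}^{\rm opt}(\omega^1,\lambda)$ for a single frozen sample path, a fact already available from \lemref{continuity}. Concretely, I would take an arbitrary sequence $(\omega^{1,n},P_n)_{n\ge 1}\subset\Gr(R_{\rm M}^{\rm opt}(\cdot,\lambda))$ converging to some $(\omega^1,P)\in\Omega^1\times\Pc_2(\Omega)$; recall that $\Pc_2(\Omega)$ is Polish and $\Omega^1$ carries the weak* topology, so that it suffices to argue with sequences. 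By definition of the graph, $P_n\in R_{\rm M}^{\rm opt}(\omega^{1,n},\lambda)$ for every $n$, while $\omega^{1,n}\to\omega^1$ in $\Omega^1$ and $P_n\to P$ in $\Pc_2(\Omega)$.

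The first step is to invoke \remref{topology}: since $\omega^{1,n}\to\omega^1$ in the weak* topology on $\Omega^1$, there exists $N\in\mathbb{N}$ with $\omega^{1,n}=\omega^1$ for all $n\ge N$. Hence $P_n\in R_{\rm M}^{\rm opt}(\omega^1,\lambda)$ for all $n\ge N$, i.e., from index $N$ onward the whole sequence lies in the single optimal pathwise control set attached to the limit path $\omega^1$.

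The second step is to apply \lemref{continuity}, which guarantees that $R_{\rm M}^{\rm opt}(\omega^1,\lambda)$ is a compact — and in particular closed — subset of the Polish space $\Pc_2(\Omega)$. Since $(P_n)_{n\ge N}\subset R_{\rm M}^{\rm opt}(\omega^1,\lambda)$ and $P_n\to P$ in $\Pc_2(\Omega)$, closedness forces $P\in R_{\rm M}^{\rm opt}(\omega^1,\lambda)$, that is, $(\omega^1,P)\in\Gr(R_{\rm M}^{\rm opt}(\cdot,\lambda))$. This establishes that the graph is (sequentially) closed, which is exactly the input needed to run the Kuratowski--Ryll-Nardzewski type measurable selection argument producing the mapping $\mathcal H$ in \propref{existence_pathwise}.

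As for the main difficulty: there is essentially none of substance, since all the real content has been front-loaded into \remref{topology} and \lemref{continuity}. The only point that deserves a line of care is the interpretation of ``closed'': because the weak* topology on $\Omega^1$ need not be metrizable in an obvious way, I would make explicit that it is sequential closedness that is used downstream, and note that \remref{topology} ensures convergent sequences in $\Omega^1$ are eventually constant, so no net-theoretic subtlety arises and the sequential argument above is complete.
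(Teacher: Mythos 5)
Your proposal is correct and follows essentially the same route as the paper's proof: a convergent sequence in the graph, eventual constancy of $\omega^{1,n}$ via \remref{topology}, and then closedness of the limiting set to conclude $(\omega^1,P)\in\Gr(R_{\rm M}^{\rm opt}(\cdot,\lambda))$. The only cosmetic difference is that you invoke the closedness of $R_{\rm M}^{\rm opt}(\omega^1,\lambda)$ directly from \lemref{continuity}, whereas the paper first places the limit in $R_{\rm M}(\omega^1,\lambda)$ via \lemref{compactness} and then recovers optimality from the continuity of $P\mapsto\mathcal{J}(\omega^1,P)$ — the same ingredients in a slightly different order.
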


\begin{proof}
To start with, fix $\phi\in C^2_b(\R^n\times\R^n)$, $s,t\in [0,T]$ with $s<t$ and bounded $\F_s$-measurable $\RV$ $h$. We will verify that  the functional $H^{s,t}_{\phi,h}:\Omega^1\times\Pc_2(\Omega)\to\R$ defined by $H^{s,t}_{\phi,h}(\omega^1,P):=\E^P[({\tt M}^{\omega^1,P}\phi(t)-{\tt M}^{\omega^1,P}\phi(s))h]$ is jointly continuous. Let $(\omega^{1,n},P^n)\to (\omega^1,P)$ in $\Omega^1\times\Pc_2(\Omega)$, as $n\to\infty$. Hence, for every $\epsilon>0$, there exists a compact set $K_{\epsilon}\subset \Omega$ and $\omega_0\in\Omega$ such that
\begin{align}\label{L2tight}
\sup_{n\geq1} P^n(\Omega\backslash K_{\epsilon})<\epsilon.
\end{align}
It holds by definition that 
{\small\begin{align*}
&\left|H^{s,t}_{\phi,h}(\omega^1,P)-H^{s,t}_{\phi,h}(\omega^{1,n},P^n)\right|\nonumber\\
&\quad\leq\left|\E^P\left[\phi(X_t,W_t)h\right]-\E^{P^n}\left[\phi(X_t,W_t)h\right]\right|+\left|\E^P\left[\phi(X_s,W_s)h\right]-\E^{P^n}\left[\phi(X_s,W_s)h\right]\right|\\
&\qquad+\E^{P^n}\left[\int_s^t\int_U\left|\bar{\mathbb{L}}\phi(r,X_r,W_r,\mu_r,u)\Lambda_r(\d u)-\bar{\mathbb{L}}\phi(r,X_r,W_r,\mu_r^n,u)\right|\left|h\right|\Lambda_r(\d u)\d r\right]\\
&\qquad+\left|\E^{P^n}\left[\int_s^t\int_U\bar{\mathbb{L}}\phi(r,X_r,W_r,\mu_r,u)h\Lambda_r(\d u)\d r\right]-\E^P\left[\int_s^t\int_U\bar{\mathbb{L}}\phi(r,X_r,W_r,\mu_r,u)h\Lambda_r(\d u)\d r\right]\right|\\
&\qquad+\E^{P^n}\left[\int_s^t\int_Z\left|\phi(X_{r-}+\gamma(r,X_{r-},\mu_{r-}^n,z),W_r)-\phi(X_{r-}+\gamma(r,X_{r-},\mu_{r-},z),W_r)\right|\left|h\right|\omega^{1,n}(\d r,\d z)\right]\\
&\qquad+\E^{P^n}\left[\left|\int_s^t\int_Z\left(\phi(X_{r-}+\gamma(r,X_{r-},\mu_{r-},z),W_r)-\phi(X_{r-},W_r)\right)h(\omega^{1,n}-\omega^1)(\d r,\d z)\right|\right]\\
&\qquad+\left|\left(\E^P-\E^{P^n}\right)\left[\int_s^t\int_Z\left(\phi(X_{r-}+\gamma(r,X_{r-},\mu_{r-},z),W_r)-\phi(X_{r-},W_r)\right)h\omega^1(\d r,\d z)\right]\right|\\
&\quad=\sum_{i=1}^7I_i,
\end{align*}}where $\mu_t^n:=P^n\circ X_t^{-1}$ and $\mu_t:=P\circ X_t^{-1}$ for $t\in[0,T]$.
By the convergence of $P^{1,n}\to P$ in $\Pc_2(\Omega)$, as $n\to\infty$, it follows that $I_1,I_2,I_4,I_7\to 0$ as $n\to\infty$. Thanks to Banach-Steinhaus theorem, $(\omega^{1,n})_{n\geq1}$ is uniformly bounded in $L^{\infty}([0,T]\times Z;\R)^*$, i.e.,  $L:=\sup_{n\geq1}\|\omega^{1,n}\|_{L^{\infty}([0,T]\times Z;\R)^*}<\infty$, and hence due to the Lipschitz continuity of $(b,\sigma,\gamma)$, it holds that, as $n\to\infty$,
\begin{align*}
I_3&\leq C\E^{P^n}\left[\int_s^t\left(\mathcal{W}_{2,\R^n}(\mu_{r-}^n,\mu_{r-})+\mathcal{W}_{2,\R^n}(\mu_{r-}^n,\mu_{r-})^2\right)|h|\d r\right]\to 0,\\
I_5&\leq C\E^{P^n}\left[\int_s^t\int_Z\mathcal{W}_{2,\R^n}(\mu_{r-},\mu_{r-})|h|\omega^{1,n}(\d r,\d z)\right]\to 0,
\end{align*}
for some constant $C>0$ which only depends on model coefficients, $h$ and $\phi$. For the term $I_6$, a simple calculation yields that
\begin{align*}        I_6&=\E^{P^n}\left[\left|\int_s^t\int_Z\left(\phi(X_{r-}+\gamma(r,X_{r-},\mu_{r-},z),W_r)-\phi(X_{r-},W_r)\right)h(\omega^{1,n}-\omega^1)(\d r,\d z)\right|\boldsymbol{1}_{K_{\epsilon}}\right]\\
&\quad+\E^{P^n}\left[\left|\int_s^t\int_Z\left(\phi(X_{r-}+\gamma(r,X_{r-},\mu_{r-},z),W_r)-\phi(X_{r-},W_r)\right)h(\omega^{1,n}-\omega^1)(\d r,\d z)\right|\boldsymbol{1}_{\Omega\backslash K_{\epsilon}}\right]\\
&=I_{6}^1+I_{6}^2.
\end{align*}
It is straightforward to verify that the mapping $\Gamma:\Omega\to L^{\infty}([0,T]\times Z)$ defined by $\Gamma(\omega)(t,z):=\phi(X_{t-}+\gamma(t,X_{t-},\mu_{t-},z),W_t)-\phi(X_{t-},W_t)$ is continuous, and hence the image $\Gamma(K_{\epsilon})$ is compact in $L^{\infty}([0,T]\times Z)$. Since $(\omega^{1,n})_{n\geq1}$ is equicontinuous and uniformly on $\Gamma(K_{\epsilon})$ by Banach-Steinhaus theorem, we conclude by Arzela-Ascoli theorem that the sequence of restrictions $(\omega^{1,n}|_{K_{\epsilon}})_{n\geq1}$ is compact in $C(K_{\epsilon};\R)$. Note that, for any $\psi\in K_{\epsilon}$, we have $\int_0^T\int_Z \psi(t,z)\omega^{1,n}(\d t,\d z)\to \int_0^T\int_Z\psi(t,z)\omega^1(\d t,\d z)$ as $n\to\infty$. Hence, every limit point of $\omega^{1,n}|_{K_{\epsilon}}$ as $n\to\infty$ in $C(K_{\epsilon};\R)$ must equal to $\omega^1|_{K_{\epsilon}}$, i.e., \begin{align}\label{compact_uniform}
\lim_{n\to\infty}\sup_{\omega\in\Omega}\left|\int_0^T\int_Z\Gamma(\omega)(t,z)\omega^{1,n}(\d t,\d z)-\int_0^T\int_Z\Gamma(\omega)(t,z)\omega^1(\d t,\d z)\right|=0.
\end{align}
Thanks to the boundedness of $h$, we further derive that
\begin{align}\label{compacth_uniform}
\lim_{n\to\infty}\sup_{\omega\in\Omega}\left[\left|\int_0^T\int_Z\Gamma(\omega)(t,z)\omega^{1,n}(\d t,\d z)-\int_0^T\int_Z\Gamma(\omega)(t,z)\omega^1(\d t,\d z)\right||h(\omega)|\right]=0,
\end{align}
which yields that $|I_{6}^1|< \epsilon$ when $n$ is large enough. In light of \eqref{L2tight}, we have $|I_{6}^2|< 2\|\phi h\|_{\infty}\epsilon$. By putting all previous estimates together, we obtain that $ \lim\limits_{n\to\infty}|H^{s,t}_{\phi,h}(\omega^1,P)-H^{s,t}_{\phi,h}(\omega^{1,n},P^n)|=0$. As the pre-image of $\{0\}$ under $H^{s,t}_{\phi,h}$, the set $\{H^{s,t}_{\phi,h}=0\}$ is thus closed in $\Omega^1\times\Pc_2(\Omega)$. On the other hand, we define a subset $\Pc_0$ of $\Pc_2(\Omega)$ as
\begin{align*}
\Pc_0:=\left\{P\in\Pc_2(\Omega);~P(W_0=0)=1,~P\circ X_0^{-1}=\lambda,~X_0~\text{is $P$-independent of}~W\right\}.
\end{align*}
One can easily show that $\Pc_0$ is closed in $\Pc_2(\Omega)$. Hence, by definition, we have the characterization of $\Gr(\mathsf{R})=(\cap_{s,t,\phi,h}\{H^{s,t}_{\phi,h}=0\})\cap(\Omega^1\cap\Pc_0)$, which yields that $\Gr(\mathsf{R})$ is closed (recall that the mapping $\mathsf{R}$ is defined in \defref{relaxed_control_pathwise}). Finally, the desired joint measurability of $\Gr(\mathsf{R}^{\rm opt})$ follows from Lemma~\ref{measurable_mapping}. 
One can easily show that $\Pc_0$ is closed in $\Pc_2(\Omega)$. Using its definition, we have the characterization of $\Gr(\mathsf{R})=(\cap_{s,t,\phi,h}\{H^{s,t}_{\phi,h}=0\})\cap(\Omega^1\cap\Pc_0)$, which implies that $\Gr(\mathsf{R})$ is closed. Finally, the desired joint measurability of $\Gr(\mathsf{R}^{\rm opt})$ follows from Lemma~\ref{measurable_mapping}. 
\end{proof}

Now, we are ready to prove \propref{existence_pathwise}:
\begin{proof}[Proof of \propref{existence_pathwise}]
The 1st assertion follows from \lemref{continuity}; while the 2nd assertion holds thanks to \lemref{measurable_selection} and Kuratowski–Ryll-Nardzewski measurable selection theorem (c.f. Corollary 14.6 in Rockafellar and Wets \cite{Rockafellar}).
\end{proof}}

\section{Step-2: Equivalence Between Different Formulations}\label{sec:Equivalence}

This section plays the key role in our pathwise formulation approach, which is devoted to establishing the equivalence between the original problem with common noise and the pathwise formulation when a sample path of common noise is fixed. To the best of our knowledge, these equivalence results are new to the existing literature.

The next theorem is the main result of this section.
\begin{theorem}\label{thm:equivalence}
	The following results on equivalence of formulations hold:
	\noindent{\rm (i)} In the original model with common noise, we have the equivalence between strict and relaxed control (in weak formulation) problems that
	\begin{align}\label{value_equivalence}
		\inf_{\bar{P}\in \mathsf{R}}\mathcal{J}(\bar{P})=	\inf_{\bar{P}\in \mathsf{R}^{\rm s}}\mathcal{J}(\bar{P}).
	\end{align}
	
	\noindent{\rm (ii)} (Superposition principle) In the pathwise formulation with a fixed $\omega^1\in\Omega^1$ and $(\boldsymbol{\mu}^{\omega^1},\hat{\alpha}^{\omega^1})\in \mathsf{R}_{\rm FP}(\omega^1)$, there exists a $P^{\omega^1}\in \mathsf{R}(\omega^1)$ such that, for $t\in [0,T]$,\begin{align}\label{consistency}
		P^{\omega^1}\circ X_t^{-1}=\mu_t^{\omega^1}(\d x),\quad P^{\omega^1}\left(\Lambda_{\cdot}=\hat{\alpha}_t^{\omega^1}(X_t,\d u)\d t\right)=1.
	\end{align}		
	Consequently, the following relationship holds that
	\begin{align}\label{pathwise_value_ineq}
	\inf_{(\boldsymbol{\mu}^{\omega^1},\hat{\alpha}^{\omega^1})\in \mathsf{R}_{\rm FP}(\omega^1)}\mathscr{J}(\omega^1,\boldsymbol{\mu}^{\omega^1},\hat{\alpha}^{\omega^1})\geq \inf_{P^{\omega^1}\in \mathsf{R}(\omega^1)}\mathcal{J}(\omega^1, P^{\omega^1}).
	\end{align}
	
	\noindent{\rm (iii)} We have the equivalence between the value function in \eqref{value_pathwise} in the pathwise formulation and the value function in the original model \eqref{value_common} in the following sense:
	\begin{align}\label{value_equivalent}
		\inf_{\bar{P}\in \mathsf{R}}\mathcal{J}(\bar{P})=\int_{\Omega^1}\inf_{P^{\omega^1}\in \mathsf{R}(\omega^1)}\mathcal{J}(\omega^1,P^{\omega^1})P^1(\d\omega^1).
	\end{align} 
	As a result, in the pathwise formulation, we have the equivalence that
	\begin{align}\label{pathwise_equivalence}
		\inf_{P^{\omega^1}\in {\mathsf{R}^{\rm s}(\omega^1)}}\mathcal{J}(\omega^1,P^{\omega^1})=\inf_{P^{\omega^1}\in \mathsf{R}(\omega^1)}\mathcal{J}(\omega^1,P^{\omega^1})=\inf_{(\boldsymbol{\mu}^{\omega^1},\hat{\alpha}^{\omega^1})\in \mathsf{R}_{\rm FP}(\omega^1)}\mathscr{J}(\omega^1,\boldsymbol{\mu}^{\omega^1},\hat{\alpha}^{\omega^1}).
	\end{align}
	Here, the second equality in \eqref{pathwise_equivalence} holds for $P^1$-$\as$ $\omega^1\in\Omega^1$; while the first equality in \eqref{pathwise_equivalence} holds for every $\omega^1\in\Omega^1$.
\end{theorem}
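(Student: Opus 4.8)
Since $R^{\rm s}(\lambda)\subseteq R(\lambda)$, only $\inf_{\bar P\in R(\lambda)}\mathcal J(\bar P)\ge\inf_{\bar P\in R^{\rm s}(\lambda)}\mathcal J(\bar P)$ needs proof, i.e.\ one must approximate an arbitrary relaxed control rule by strict ones. Given $\bar P\in R(\lambda)$, I would realize it on a filtered space $(\Omega',\F',\Fb',P')$ as in \lemref{moment_p}, carrying the state $\bar X^{\bar\Lambda}$, the relaxed control $\bar\Lambda$, a Brownian motion $\bar W$ (from the martingale measure) and the Poisson measure $\bar N$. A chattering argument (El Karoui et al.\ \cite{Karoui}, Lacker \cite{Lacker3}) produces $\Fb'$-progressively measurable $U$-valued controls $\bar\alpha^n$, taken adapted to the filtration generated by $(\bar X_0,\bar W,\bar N)$ so that the compatibility \eqref{compatible} is preserved, with $\delta_{\bar\alpha^n_t}(\d u)\d t\to\bar\Lambda_t(\d u)\d t$ in the relaxed topology, $P'$-almost surely. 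Solving the conditional McKean--Vlasov SDE driven by $(\bar\alpha^n,\bar W,\bar N)$ gives states $\bar X^n$ with conditional flows $\boldsymbol{\mu}^n$; the uniform $p$-moment bound of \lemref{moment_p} yields tightness of $(\bar X^n,\delta_{\bar\alpha^n_\cdot},\bar W,\bar N,\boldsymbol{\mu}^n)$, and by \assref{ass1} and uniqueness of the conditional McKean--Vlasov SDE every limit point is the law of $(\bar X^{\bar\Lambda},\bar\Lambda,\bar W,\bar N,\bar{\boldsymbol{\mu}})$; joint continuity of $f$ together with the uniform moment bound then gives $\mathcal J(\bar P_n)\to\mathcal J(\bar P)$ for the strict control rules $\bar P_n$ attached to $\bar\alpha^n$, which proves \eqref{value_equivalence}.

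\textbf{Part (ii).} Fix $\omega^1\in\Omega^1$ with jump times $0=t_0<t_1<\cdots<t_k\le t_{k+1}=T$ and let $(\boldsymbol{\mu}^{\omega^1},\hat\alpha^{\omega^1})\in R_{\rm FP}(\omega^1,\lambda)$. On each open interval $(t_i,t_{i+1})$ the measure $\omega^1$ has no atom, so \eqref{FP_measure} reduces to the linear Fokker--Planck equation for $\boldsymbol{\mu}^{\omega^1}$ with generator $L_s\phi(x):=\int_U\Lb\phi(s,x,\mu^{\omega^1}_s,u)\hat\alpha^{\omega^1}_s(x,\d u)$, whose coefficients have linear growth in $x$ (by \assref{ass1}-(A2)) and are integrable against $\mu^{\omega^1}_s$ (moments propagating from $\lambda$ as in \lemref{moment_p_pathwise}). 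The superposition principle for Fokker--Planck--Kolmogorov equations then produces $Q_i\in\Pc_2(C([t_i,t_{i+1}];\R^n))$ with one-dimensional marginals $\mu^{\omega^1}_t$ for $t\in[t_i,t_{i+1})$, $Q_i\circ X_{t_i}^{-1}=\mu^{\omega^1}_{t_i}$, under which $\phi(X_t)-\int_{t_i}^tL_s\phi(X_s)\d s$ is a martingale for every $\phi\in C_b^2(\R^n)$; continuity of $s\mapsto\mu^{\omega^1}_s$ on $(t_i,t_{i+1})$ gives $Q_i\circ X_{t_{i+1}-}^{-1}=\mu^{\omega^1}_{t_{i+1}-}$, while testing \eqref{FP_measure} against all $\phi\in C_b^2(\R^n)$ across the single atom at $t_{i+1}$ forces $\mu^{\omega^1}_{t_{i+1}}=(J_{i+1})_*\mu^{\omega^1}_{t_{i+1}-}$ with $J_{i+1}(x):=x+\gamma(t_{i+1},x,\mu^{\omega^1}_{t_{i+1}-},p^{\omega^1}(t_{i+1}))$. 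I would then glue the $Q_i$ into one $\tilde Q\in\Pc_2(\D)$ by induction: set $\tilde Q_0:=Q_0$ extended to $D([0,t_1];\R^n)$ by $X_{t_1}:=J_1(X_{t_1-})$, so $\tilde Q_0\circ X_{t_1}^{-1}=\mu^{\omega^1}_{t_1}$; given $\tilde Q_{i-1}$ on $D([0,t_i];\R^n)$ with $X_{t_i}$-marginal $\mu^{\omega^1}_{t_i}$, disintegrate $\tilde Q_{i-1}$ and $Q_i$ with respect to the value at $t_i$ (their common marginal), concatenate the conditional laws path-by-path at $t_i$, and extend across $t_{i+1}$ by the deterministic jump $J_{i+1}$, obtaining $\tilde Q_i$; finally $\tilde Q:=\tilde Q_k$, appending the continuous piece on $[t_k,T]$ when $t_k<T$. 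One checks $\tilde Q\circ X_0^{-1}=\lambda$, $\tilde Q\circ X_t^{-1}=\mu^{\omega^1}_t$ for all $t$, and that
\[
\phi(X_t)-\int_0^t\!\!\int_U\Lb\phi(s,X_s,\mu^{\omega^1}_s,u)\hat\alpha^{\omega^1}_s(X_s,\d u)\d s-\int_0^t\!\!\int_Z\big(\phi(X_{s-}+\gamma(s,X_{s-},\mu^{\omega^1}_{s-},z))-\phi(X_{s-})\big)\omega^1(\d s,\d z)
\]
is a $\tilde Q$-martingale for every $\phi\in C_b^2(\R^n)$: on each $(t_i,t_{i+1})$ it is the superposition martingale of $Q_i$, the gluing through regular conditional distributions preserves the martingale property by a Markov-type concatenation argument (in the spirit of Stroock and Varadhan \cite{Stroock}), and at each $t_{i+1}$ the jump of $\phi(X_\cdot)$ equals $\phi(J_{i+1}(X_{t_{i+1}-}))-\phi(X_{t_{i+1}-})$, i.e.\ exactly the contribution of the atom of $\omega^1$ at $(t_{i+1},p^{\omega^1}(t_{i+1}))$. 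Then $P^{\omega^1}:=\tilde Q\circ\Phi_{\hat\alpha^{\omega^1}}^{-1}$, with $\Phi_{\hat\alpha^{\omega^1}}$ as in \eqref{recover}, lies in $R_{\rm M}(\omega^1,\lambda)$ and satisfies \eqref{consistency}; since $P^{\omega^1}\circ X_t^{-1}=\mu^{\omega^1}_t$, the two costs coincide and \eqref{pathwise_value_ineq} follows. I expect this concatenation step -- verifying that the glued path measure genuinely solves the martingale problem of \defref{relaxed_control_pathwise} across the deterministic jump times, with the required measurable disintegrations and the filtration book-keeping at each gluing -- to be the main obstacle, and it is the heart of the "pathwise superposition principle".

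\textbf{Part (iii), equivalence \eqref{value_equivalent}.} I would argue by a two-sided inequality. For ``$\ge$'', take any $\bar P\in R(\lambda)$ and disintegrate it along the $\bar N$-coordinate as $\bar P(\d\omega,\d\omega^1)=\bar P^{\omega^1}(\d\omega)P^1(\d\omega^1)$; using the compatibility \eqref{compatible} to identify $\bar\mu_t=\Law^{\bar P}(\bar X_t|\F^1_t)=\Law^{\bar P}(\bar X_t|\F^1)$, whose regular conditional version is $\bar P^{\omega^1}\circ X_t^{-1}$, Lemma~6.14 in \cite{BWY} gives $\bar P^{\omega^1}\in R_{\rm M}(\omega^1,\lambda)$ for $P^1$-a.e.\ $\omega^1$ with $\mathcal J(\bar P)=\int_{\Omega^1}\mathcal J(\omega^1,\bar P^{\omega^1})P^1(\d\omega^1)\ge\int_{\Omega^1}\inf_{P\in R_{\rm M}(\omega^1,\lambda)}\mathcal J(\omega^1,P)P^1(\d\omega^1)$; taking the infimum over $\bar P$ gives the bound. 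For ``$\le$'', take the measurable optimal selection $\omega^1\mapsto P^{\omega^1}_*\in R^{\rm opt}_{\rm M}(\omega^1,\lambda)$ of \propref{existence_pathwise} and set $\bar P^*(\d\omega,\d\omega^1):=P^{\omega^1}_*(\d\omega)P^1(\d\omega^1)$: conditions (i)--(ii) of \defref{relaxed_control} are immediate, condition (iii) holds with $\bar\mu_t(\omega^1):=P^{\omega^1}_*\circ X_t^{-1}=\mu^{\omega^1}_t$, which is $\F^1_t$-adapted because the pathwise Fokker--Planck problem determines this flow progressively, jump time by jump time, from $\lambda$ and the atoms of $\omega^1$ up to $t$ (part (ii) being precisely what realizes this flow as a relaxed control), and condition (iv) follows from the pathwise martingale property under each $P^{\omega^1}_*$ by Fubini over the product; hence $\bar P^*\in R(\lambda)$ and $\inf_{R(\lambda)}\mathcal J\le\mathcal J(\bar P^*)=\int_{\Omega^1}\mathcal J(\omega^1,P^{\omega^1}_*)P^1(\d\omega^1)=\int_{\Omega^1}\inf_{P\in R_{\rm M}(\omega^1,\lambda)}\mathcal J(\omega^1,P)P^1(\d\omega^1)$. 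Combining the two inequalities yields \eqref{value_equivalent}.

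\textbf{Part (iii), equivalence \eqref{pathwise_equivalence}.} The middle equality combines \eqref{pathwise_value_ineq} with its converse: given $P^{\omega^1}\in R_{\rm M}(\omega^1,\lambda)$, set $\mu^{\omega^1}_t:=P^{\omega^1}\circ X_t^{-1}$ and let $\hat\alpha^{\omega^1}$ be the kernel disintegrating $\E^{P^{\omega^1}}[\delta_{X_t}(\d x)\Lambda_t(\d u)]$ over its first marginal; taking expectations in the martingale problem of \defref{relaxed_control_pathwise} exactly as in the derivation of \eqref{FP_mu} -- now with no conditioning, since the pathwise model carries no common noise, and with $\gamma$ uncontrolled as assumed -- shows $(\boldsymbol{\mu}^{\omega^1},\hat\alpha^{\omega^1})\in R_{\rm FP}(\omega^1,\lambda)$ with $\mathscr J(\omega^1,\boldsymbol{\mu}^{\omega^1},\hat\alpha^{\omega^1})=\mathcal J(\omega^1,P^{\omega^1})$, and this holds for $P^1$-a.e.\ $\omega^1$. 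The first equality is the chattering lemma applied to the pathwise problem: for a fixed $\omega^1$ the dynamics has no common noise and only finitely many deterministic jumps, so any $P^{\omega^1}\in R_{\rm M}(\omega^1,\lambda)$ can be approximated by $R^{\rm s}_{\rm M}(\omega^1,\lambda)$ exactly as in part (i) -- realize via \lemref{moment_p_pathwise}, chatter the relaxed control into Dirac controls, and pass to the limit using \eqref{eq:momentp00} and the continuity of the coefficients, so the cost is preserved -- which works for every $\omega^1$. Together with Part (i) and Part (ii), this completes the chain of equivalences, the only genuinely new ingredient being the pathwise superposition/concatenation argument of Part (ii).
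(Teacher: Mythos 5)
Your treatment of Part (ii), the pathwise superposition principle, is essentially the paper's own: reduce the Fokker--Planck equation to a jump-free equation on each open interval $(t_i^{\omega^1},t_{i+1}^{\omega^1})$, invoke Trevisan's superposition to produce $Q_i$, identify the deterministic jump map $J_{i+1}$ from the single-atom relation $\langle\phi,\mu_{t_{i+1}}\rangle=\langle\phi\circ J_{i+1},\mu_{t_{i+1}-}\rangle$, concatenate via regular conditional probability distributions in the spirit of Stroock--Varadhan (the paper formalizes exactly this step in \lemref{pathwise_concatenation} and \lemref{concatenation}), and push forward by $\Phi_{\hat\alpha^{\omega^1}}$. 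Your Part (i) is also the paper's chattering/approximation strategy, carried out by mimicking the Djete et al.\ argument.

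Where you depart from the paper is in the $\geq$ direction of \eqref{value_equivalent}. You disintegrate an arbitrary $\bar P\in R(\lambda)$ along $\bar N$ and assert directly that the conditional law $\bar P^{\omega^1}$ belongs to $R_{\rm M}(\omega^1,\lambda)$, attributing this to Lemma~6.14 of \cite{BWY}. The paper deliberately avoids this route: it first reduces to $\bar P\in R^{\rm s}(\lambda)$ via Part (i), extracts from $\bar P$ a conditional Fokker--Planck pair $(\boldsymbol{\mu}^{\omega^1},\hat\alpha^{\omega^1})\in R_{\rm FP}(\omega^1,\lambda)$ via \eqref{FP_mu}, and then uses Part (ii) to produce a $P^{\omega^1}\in R_{\rm M}(\omega^1,\lambda)$ with the same pathwise cost. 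The reason for this detour is exactly the subtlety your disintegration elides: the martingale property in \defref{relaxed_control} holds with respect to the product filtration $(\F_t\otimes\F_t^1)_t$, while for the disintegrated measure $\bar P^{\omega^1}$ to solve the pathwise martingale problem one needs a strengthening to $(\F_t\otimes\F_T^1)_t$, i.e.\ a compatibility/conditional-independence statement. The compatibility identity \eqref{compatible} you invoke is established in the strong model via the L\'evy property of $N$, but it is not a condition imposed in \defref{relaxed_control} on the canonical space and would need a separate argument for a general relaxed rule $\bar P$. The paper cites Lemma~6.14 of \cite{BWY} only for the glueing ($\leq$) direction — the same construction of $\bar P^*$ you use there — and, in the remark following the theorem, explicitly flags that a superposition principle in the original model with common noise (i.e.\ the disintegration-side analogue of what you invoke) remains open. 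A consequence of your route is that Part (ii) becomes dispensable for proving \eqref{value_equivalent}; in the paper's architecture, Part (ii) is precisely the ingredient that closes the $\geq$ direction without having to prove any conditional-martingale-problem statement. So while your Parts (i), (ii), and the $\leq$ direction and the two equalities of \eqref{pathwise_equivalence} are sound, the $\geq$ direction of \eqref{value_equivalent} as you argue it has a gap: justify that disintegration of $\bar P\in R(\lambda)$ (not only $R^{\rm s}(\lambda)$) along $\bar N$ preserves the pathwise martingale problem, or — more safely — follow the paper and route through the Fokker--Planck equation and the pathwise superposition principle of Part (ii).
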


\begin{proof}
{\rm (i)} For any $\bar{P}\in \mathsf{R}$, let $\hat{P}\in\Pc_2(\hat\Omega)$ be the corresponding probability measure on $\hat\Omega$ (c.f.  \remref{relaxed_control_extension}). Then, we can obtain the existence of a sequence of $(\hat{P}_m)_{m\geq 1}\subset \hat{\mathsf{R}}^{\rm s}$ such that $\lim_{m\to\infty}\mathcal{W}_{2;\hat\Omega}(\hat{P}_m,\hat{P})=0$ by mimicking the proof of Proposition 7 and Lemma 4 in Djete et al. \cite{Djete}. {Indeed, note that the jump term $\gamma(\cdot)$ does not involve the control variable. Hence, when one applies the argument in the proof of Lemma 4 in \cite{Djete} to our setting, the difference of the jump term can be bounded by $M\E^{\hat{\Pb}_{\bar\omega}}[\int_0^t|\hat X_s^{\bar\omega,e}-\hat X_s|^2\d s]$ with $M$ defined in \assref{ass1}(we use the notation in \cite{Djete} here), which can be directly absorbed into the standard Gronwall's inequality and the rest of the arguments can follow the exact same steps as in \cite{Djete}.} On the other hand, if we set $\bar{P}_m=\hat{P}_m\circ (\hat X,\hat\Lambda,\hat W,\hat N)^{-1}$, one can easily check that $\bar{P}_m\in \mathsf{R}^{\rm s}$. Note that such push forward mapping is continuous, we also have that $\lim_{m\to\infty}\mathcal{W}_2(\bar{P}_m,\bar{P})=0$. By definition, it holds that
{\small\begin{align}\label{value_convergence}
\mathcal{J}(\bar{P})&=\E^{\bar P}\left[\int_0^T\int_Uf(t,\bar{X}_t,\bar{\mu}_t,u)\bar\Lambda_t(\d u)\d t{+g(\bar{X}_T,\bar{\mu}_T)}\right]\nonumber\\
&=\E^{\hat P}\left[\int_0^T\int_Uf(t,\hat{X}_t,\hat{\mu}_t,u)\hat\Lambda_t(\d u)\d t{+g(\hat X_T,\hat\mu_T)}\right]\nonumber\\
&=\lim_{m\to\infty}\E^{\hat P_m}\left[\int_0^T\int_Uf(t,\hat{X}_t,\hat{\mu}_t,u)\hat\Lambda_t(\d u)\d t{+g(\hat X_T,\hat\mu_T)}\right]\nonumber\\
&=\lim_{m\to\infty}\E^{\bar P_m}\left[\int_0^T\int_Uf(t,\bar{X}_t,\bar{\mu}_t^m,u)\bar\Lambda_t(\d u)\d t{+g(\bar X_T,\bar\mu_T)}\right]\nonumber\\
&=\lim_{m\to\infty}\mathcal{J}(\bar{P}_m),
\end{align}}where $\bar{\boldsymbol{\mu}}^m=(\bar\mu_t^m)_{t\in [0,T]}$ is the corresponding $\Fb^1$-adapted c\`{a}dl\`{a}g measure flow to $\bar{P}_m$. In view of \equref{value_convergence} and the arbitrariness of $\bar{P}\in \mathsf{R}$, we conclude that $\inf_{\bar{P}\in \mathsf{R}}\mathcal{J}(\bar{P})=\inf_{\bar{P}^{\rm s}\in \mathsf{R}^{\rm s}}\mathcal{J}(\bar{P}^{\rm s})$.
	
	{(ii)} Recall that the domain of definition of the corresponding point function $p^{\omega^1}$ is given by $D_{p^{\omega^1}}=\{t_1^{\omega^1},\ldots,t_k^{\omega^1}\}$. Let $(\boldsymbol{\mu}^{\omega^1},\hat{\alpha}^{\omega^1})\in \mathsf{R}_{\rm FP}(\omega^1)$ be a given pathwise measure-valued control. Then, the FP equation \equref{FP_measure} can be rewritten as, for $t\in[0,T]$,
	\begin{align}\label{FP_piecewise}
		\langle\phi,\mu_t^{\omega^1}\rangle&=\langle\phi,\lambda\rangle+\int_0^t\left\langle\int_U\mathbb{L}\phi(s,\cdot,\mu_s^{\omega^1},u)\hat{\alpha}_s^{\omega^1}(\cdot,\d u),\mu_s^{\omega^1}\right\rangle\d s\nonumber\\
		&\quad+\sum_{i=1}^k\left\langle\left(\phi(\cdot+\gamma(t_i^{\omega^1},\cdot,\mu_{t_i^{\omega^1}-},p^{\omega^1}(t_i^{\omega^1})))-\phi(\cdot)\right),\mu_{t_i^{\omega^1}-}\right\rangle {\bf1}_{\{t_i^{\omega^1}\leq t\}}.
	\end{align}
	In particular, $\mu_t^{\omega^1}$ for $t\in [0,t_1^{\omega^1})$ solves the following FP equation:
	\begin{align*}
		\langle\phi,\mu_t^{\omega^1}\rangle&=\langle\phi,\lambda\rangle+\int_0^t\left\langle\int_U\mathbb{L}\phi(s,\cdot,\mu_s^{\omega^1},u)\hat{\alpha}_s^{\omega^1}(\cdot,\d u),\mu_s^{\omega^1}\right\rangle\d s,\quad t\in [0,t_1^{\omega^1}).
	\end{align*}
	Thus, by applying the classical superposition principle (c.f. Theorem 2.5 in Trevisan \cite{Trevisan}), there exists a $Q^{\omega^1}_0\in\Pc_2(C[0,t_1^{\omega^1}],\R^n)$ such that $Q^{\omega^1}_0\circ \boldsymbol{x}(t)^{-1}=\mu_t^{\omega^1}$ for $t\in [0,t_1^{\omega^1})$, and for test function $\phi\in C_b^2(\mathbb{R}^n)$, it holds that
	\begin{align*}
		{\tt N}^{\boldsymbol{\mu}^{\omega^1}}\phi(t):=\phi(X_t)-\int_0^t\int_U\mathbb{L}\phi(s,X_s,\mu_s^{\omega^1},u)\Lambda_s(\d u)\d s,\quad t\in [0,t_1^{\omega^1}]
	\end{align*} 
	is a $(R_0^{\omega^1},\Fb^X\otimes\Fb^{\mathcal{Q}})$-martingale. Here,  $R_0^{\omega^1}:=Q_0^{\omega^1}\circ\Phi_{\hat{\alpha}^{\omega^1}}^{-1}$ (c.f. \equref{recover}, and in order to perform the push-forward mapping, we restrict $\Phi_{\hat{\alpha}^{\omega^1}}$ to the interval $[0,t_1^{\omega^1}]$). Similarly, we can construct $Q_1^{\omega^1},\dots,Q_{k}^{\omega^1}$ such that $Q_i^{\omega^1}\circ \boldsymbol{x}(t)^{-1}=\mu_t^{\omega^1}$ for $t\in [t_{i}^{\omega^1},t_{i+1}^{\omega^1})$, and $\{{\tt N}^{\boldsymbol{\mu}^{\omega^1}}\phi(t);~t\in [t_{i}^{\omega^1},t_{i+1}^{\omega^1}]\}$ is a $(R_i^{\omega^1},\Fb)$-martingale for $i=1,\ldots,k$, where $R_i^{\omega^1}=Q_i^{\omega^1}\circ\Phi_{\hat{\alpha}^{\omega^1}}^{\omega^1}$. Note that $\{{\tt N}^{\boldsymbol{\mu}^{\omega^1}}\phi(t);~t\in [t_1^{\omega^1},t_2^{\omega^1}]\}$ is a $(P_1^{\omega^1},\Fb^X\otimes\Fb^{\mathcal{Q}})$-martingale with initial law $\mu_{t_1^{\omega^1}}$. Hence, by applying Theorem 6.1.3 of Stroock and Varadhan \cite{Stroock}, we have,  for $\mu_{t_1}^{\omega^1}$-$\as$ $x\in \R^n$,  $\{{\tt N}^{\boldsymbol{\mu}^{\omega^1}}\phi(t);~t\in [t_1^{\omega^1},t_2^{\omega^1}]\}$ is a $(R_1^{\omega^1,x},\Fb^X\otimes\Fb^{\mathcal{Q}})$-martingale with initial value $x$, where $R_1^{\omega^1,x}=Q_1^{\omega^1,x}\circ \Phi_{\hat{\alpha}^{\omega^1}}^{-1}$ and $(Q_1^{\omega^1,x})_{x\in\R^n}$ is the r.c.p.d. of $Q_1^{\omega^1}$ given $\sigma(\boldsymbol{x}(t_2^{\omega^1}))$. 
	Note that, for $\phi\in C_b^2(\R^n)$, it holds that
	\begin{align*}
		\left\langle \phi,\mu_{t_1^{\omega^1}}\right\rangle=\left\langle\phi\left(\cdot+\gamma(t_1^{\omega^1},\cdot,\mu_{t_1^{\omega^1}-},p^{\omega^1}(t_1^{\omega^1}))\right),\mu_{t_1^{\omega^1}-}\right\rangle.
	\end{align*}
	Therefore, for $\mu_{t_1^{\omega^1}-}$-$\as$ $x\in\R^n$, there exists a family of probability measures $(Q_1^{\omega^1,x})_{x\in \R^n}\subset\Pc_2(C([t_1^{\omega^1},t_2^{\omega^1}];\R^n))$ that are measurable with respect to $x\in\R^n$ (still denoted by $Q_1^{\omega^1,x}$ for simplicity and the same for $R_1^{\omega^1,x}$ in the sequel) such that $\{{\tt N}^{\boldsymbol{\mu}^{\omega^1}}\phi(t);~t\in [t_1^{\omega^1},t_2^{\omega^1}]\}$ is a $(R_1^{\omega^1,x},\Fb^X\otimes\Fb^{\mathcal{Q}})$-martingale with initial value $x+\gamma(t_1^{\omega^1},x,\mu_{t_i^{\omega^1}-},p^{\omega^1}(t_1^{\omega^1}))$, where $R_1^{\omega^1,x}=Q_1^{\omega^1,x}\circ \Phi_{\hat{\alpha}^{\omega^1}}^{-1}$.
	
In view of \lemref{concatenation}, let us set $Q^{\omega^1}=Q_0^{\omega^1}\otimes_{t_1^{\omega^1}}Q_1^{\omega^1,\cdot}$. Thus, we have by construction (c.f. \lemref{concatenation}) that, for $A\in\mathcal{B}(\R^n)$ and $t\in [0,t_2^{\omega^1})$,
\begin{align*}
Q^{\omega^1}(\boldsymbol{x}(t)\in A)&=Q_0^{\omega^1}(\boldsymbol{x}(t)\in A){\bf1}_{\{t<t_1^{\omega^1}\}}+\E^{Q_0^{\omega^1}}\left[\delta^{\eta}\otimes_{t_1^{\omega^1}}Q_1^{\omega^1,\eta(t_1^{\omega^1})}(\boldsymbol{x}(t)\in A)\right]{\bf1}_{\{t\geq t_1^{\omega^1}\}}\\
&=\mu_t^{\omega^1}(A){\bf1}_{\{t<t_1^{\omega^1}\}}+Q_1^{\omega^1}(\boldsymbol{x}(t)\in A){\bf1}_{\{t\geq t_1^{\omega^1}\}}=\mu_t^{\omega^1}(A),
\end{align*}
where, in the penultimate equality, we used the tower property. As a result, for $t\in [0,t_2^{\omega^1})$, the consistency condition \equref{consistency} holds for $R^{\omega^1}$. {
Let us introduce the process that, for any test function $\phi\in C_b^2(\R^n)$,
\begin{align}\label{eq:Nw1Rw1}
{\tt N}^{\omega^1,R^{\omega^1}}\phi(t):&=\phi(X_t)-\int_0^t\int_U\Lb\phi(s,X_s,\mu_s^{\omega^1},u)\Lambda_s(\d u)\d s\nonumber\\
&\quad-\int_0^t\int_Z\left(\phi(X_{s-}+\gamma(s,X_{s-},\mu_s^{\omega^1},z)-\phi(X_{s-}) \right)\omega^1(\d s,\d z),~\forall t\in [0,T],
\end{align}
where $\mu_t^{\omega^1}=R^{\omega^1}\circ X_t^{-1}$ and the infinitesimal generator $\Lb$ acting on $\phi\in C_b^2(\R^n)$ is defined by
\begin{align*}
\Lb\phi(t,x,\mu,u):=b(t,x,\mu,u)^{\T}\nabla\phi(x)+\frac12\tr\left(\sigma\sigma^{\T}(t,x,\mu,u)\nabla^2\phi(x)\right).
\end{align*}}We next check that $\{{\tt N}^{\omega^1,R^{\omega^1}}\phi(t);~t\in [0,t_2^{\omega^1})\}$ is a $(R^{\omega^1},\Fb^X\otimes\Fb^{\mathcal{Q}})$-martingale with $R^{\omega^1}=Q^{\omega^1}\circ\Phi_{\hat{\alpha}^{\omega^1}}^{-1}$. Firstly, we have by definition that
	\begin{align*}
		R^{\omega^1}\left(\Lambda_t(\d u)=\hat{\alpha}^{\omega^1}(X_t,\d u),~\forall t\in [0,t_2^{\omega^1}]\right)=1.
	\end{align*}
	Thanks to the second assertion of \lemref{concatenation}, it only suffices to show that $\{{\tt N}^{\omega^1,R^{\omega^1}}\phi(t_1^{\omega^1}-\wedge t);~t\in [0,t_2^{\omega^1}]\}$ is a $(P_0^{\omega^1},\Fb^X\otimes\Fb^{\mathcal{Q}})$-martingale and $ \{{\tt N}^{\omega^1,R^{\omega^1}}\phi(t)-{\tt N}^{\omega^1,R^{\omega^1}}\phi(t_1^{\omega^1}-\wedge t);~t\in [0,t_2^{\omega^1}]\}$ is a $((\delta_{\eta}\otimes_{t_1^{\omega^1}}Q_1^{\omega^1,\eta(t_1^{\omega^1})})\circ\Phi_{\hat{\alpha}^{\omega^1}}^{-1},\Fb^X\otimes\Fb^{\mathcal{Q}})$-martingale for $Q_0^{\omega^1}$-$\as$ $\eta\in C([0,t_1^{\omega^1}];\R^n)$. Actually, the first martingale property follows from the construction of $Q_0^{\omega^1}$. To show the second martingale property, let us consider $0\leq s<t\leq t_2^{\omega^1}$. The martingale condition obviously holds when $t<t_1^{\omega^1}$ or $s\geq t_1^{\omega^1}$, and we only need to focus on the case $0\leq s<t_1^{\omega^1}\leq t\leq t_2^{\omega^1}$. Simple calculations yield that
    \begin{align*}
&\E^{((\delta_{\eta}\otimes_{t_1^{\omega^1}}Q_1^{\omega^1,\eta(t_1^{\omega^1})})\circ\Phi_{\hat{\alpha}^{\omega^1}}^{-1}}\left[{\tt N}^{\omega^1,R^{\omega^1}}\phi(t)-{\tt N}^{\omega^1,R^{\omega^1}}\phi(t_1^{\omega^1}-\wedge t)\middle|\F_s\right]\\
		&~=	\E^{((\delta_{\eta}\otimes_{t_1^{\omega^1}}Q_1^{\omega^1,\eta(t_1^{\omega^1})})\circ\Phi_{\hat{\alpha}^{\omega^1}}^{-1}}\left[{\tt N}^{\omega^1,R^{\omega^1}}\phi(t_1^{\omega^1})-{\tt N}^{\omega^1,R^{\omega^1}}\phi(t_1^{\omega^1}-)\middle|\F_s\right]\\
&~=\E^{((\delta_{\eta}\otimes_{t_1^{\omega^1}}Q_1^{\omega^1,\eta(t_1^{\omega^1})})\circ\Phi_{\hat{\alpha}^{\omega^1}}^{-1}}\left[\phi(X_t)-\phi\left(\eta(t_1^{\omega^1})+\gamma(t_1^{\omega^1},\eta(t_1^{\omega^1}),\mu_{t_1^{\omega^1}-},p^{\omega^1}(t_1^{\omega^1}))\right)\middle|\F_s\right]\\
		&~=0.
	\end{align*}
	Here, in the last inequality, we have used the fact that \begin{align*}
		Q_1^{\omega^1,\eta(t_1^{\omega^1})}\left(\boldsymbol{x}(t_1^{\omega^1})=\eta(t_1^{\omega^1})+\gamma(t_1^{\omega^1},\eta(t_1^{\omega^1}),\mu_{t_1^{\omega^1}-},p^{\omega^1}(t_1^{\omega^1}))\right)=1.
	\end{align*}
	We then proceed as in the case $t \in [0, t_2^{\omega^1})$ by applying the concatenation procedure to $Q^{\omega^1}$ iteratively to extend it to a probability measure in $\mathcal{P}_2(\D^n)$ (still denoted by $Q^{\omega^1}$ for simplicity). We finally define $R^{\omega^1}=Q^{\omega^1}\circ \Phi_{\hat{\alpha}^{\omega^1}}^{-1}$, which possesses the desired properties that can be verified in a similar manner. By \lemref{W_equivalence}, we conclude the existence of the desired probability measure $P^{\omega^1}\in \mathsf{R}(\omega^1)$. 
	
We next turn to the second assertion. By \thmref{thm:equivalence}-(ii), we have, for $(\omega^1,\boldsymbol{\mu}^{\omega^1},\hat{\alpha}^{\omega^1})\in \Omega^1\times \mathsf{R}_{\rm FP}(\omega^1)$, 
\begin{align}\label{value_preineq}
&\mathscr{J}(\omega^1,\boldsymbol{\mu}^{\omega^1},\hat{\alpha}^{\omega^1})=\int_0^T\int_Uf(t,x,\mu_t^{\omega^1},u)\hat{\alpha}^{\omega^1}_t(x,\d u)\mu_t^{\omega^1}(\d x)\d t{+\int_{\R^n}g(x,\mu_T^{\omega^1})\mu_T^{\omega_1}(dx)}\\
&\quad=	\E^{P^{\omega^1}}\left[\int_0^T\int_Uf(t,X_t,\mu_t^{\omega^1},u)\Lambda_t(\d u)\d t{+g(X_T,\mu_T^{\omega^1})}\right]=
\mathcal{J}(\omega^1,P^{\omega^1})\geq \inf_{Q\in \mathsf{R}(\omega^1)}\mathcal{J}(\omega^1,Q).\nonumber
\end{align}
By the arbitrariness of $(\boldsymbol{\mu}^{\omega^1},\hat{\alpha}^{\omega^1})$, we can conclude the claim in \equref{pathwise_value_ineq}.
	
	{\rm (iii)} On one hand, for any $\bar{P}\in \mathsf{R}^{\rm s}$, let us set
	\begin{align*}
		\hat{\alpha}_t^{\omega^1}(x,\d u)=\Law^{\bar{P}}(\bar{\alpha}_t|\F_t^1,\bar{X}_t=x)(\omega^1),\quad \mu_t^{\omega^1}=\Law^{\bar{P}}(\bar{X}_t|\F_t^1)(\omega^1),\quad \forall (t,\omega^1)\in[0,T]\times\Omega^1.
	\end{align*} 
Then, it holds that $(\boldsymbol{\mu}^{\omega^1}=(\mu_t^{\omega^1})_{t\in [0,T]},\hat{\alpha}^{\omega^1})\in \mathsf{R}_{\rm FP}(\omega^1)$ for $P^1$-$\as$ $\omega^1\in\Omega^1$ in lieu of \equref{FP_mu}.  Hence, for any $\bar{P}\in \mathsf{R}^{\rm s}$, we have  
\begin{align*}
\mathcal{J}(\bar{P})&=\E^{\bar P}\left[\int_0^T\int_Uf(t,\bar{X}_t,\mu_t,u)\bar\Lambda_t(\d u)\d t{+g(\bar{X}_T,\bar{\mu}_T)}\right]\\
&=\int_{\Omega^1}\left[\int_0^T\int_{\R^n\times U}f(t,x,\mu_t^{\omega^1},u)\hat{\alpha}^{\omega^1}_t(x,\d u)\mu_t^{\omega^1}(\d x)\d t{+\int_{\R^n}g(x,\mu_T^{\omega^1})\mu_T^{\omega^1}(\d x)}\right]P^1(\d\omega^1)\\
&=\int_{\Omega^1}	\mathscr{J}(\omega^1,\boldsymbol{\mu}^{\omega^1},\hat{\alpha}^{\omega^1})P^1(\d\omega^1)\geq\int_{\Omega^1}\inf_{P^{\omega^1}\in \mathsf{R}(\omega^1)}\mathcal{J}(\omega^1,P^{\omega^1})P^1(\d\omega^1),
\end{align*}
where we have used \equref{value_preineq} in the last equality. As a consequence, we obtain by the arbitrariness of $\bar{P}\in \mathsf{R}^{\rm s}$ that
\begin{align}\label{geq}
\inf_{\bar{P}\in \mathsf{R}^{\rm s}}\mathcal{J}(\bar{P})\geq\int_{\Omega^1}\inf_{P^{\omega^1}\in \mathsf{R}(\omega^1)}\mathcal{J}(\omega^1,P^{\omega^1})P^1(\d\omega^1).
\end{align}
On the other hand,  let $P^{\omega^1}_*$ be the measurable selection given in \equref{mea_sele}, and set 
\begin{align}\label{P_bar}
\bar{P}^*(\d\omega,\d\omega^1)=P^{\omega^1}_*(\d\omega)P^1(\d\omega^1).
\end{align} Our goal is to show that $\bar{P}^*\in\mathsf{R}$,  and hence the reverse inequality holds. We first identify the corresponding $\Fb^1$-adapted c\`{a}dl\`{a}g measure flow $\bar{\boldsymbol{\mu}}=(\bar\mu_t)_{t\in [0,T]}$. To this end, we first verify that $\mu_t^{\omega^1} = \Law^{\bar{P}^*}(\bar{X}_t | \F_t^1)$, $P^1$-a.s. 
Consider a measurable set of the form $B = B_1 \cap B_2$, where $B_1 \in \F_t^1$ and $B_2 = \left\{ \omega^1 \in \Omega^1;~\omega^1((t,s] \times A) \in F \right\}$ for some $A \in \mathscr{Z}$ and $F \in \mathcal{B}(\R_+)$. Then, for any $C \in \mathcal{B}(\R^n)$, it holds that
\begin{align*}
&\int_B \bar{P}^*(\bar{X}_t \in C | \F_t^1)(\omega^1) P^1(\d\omega^1)= \int_{B_1} \boldsymbol{1}_{B_2}(\omega^1) \bar{P}^*(\bar{X}_t \in C | \F_t^1)(\omega^1) P^1(\d\omega^1) \\
&\quad= P^1(B_2) \int_{B_1} \bar{P}^*(\bar{X}_t \in C | \F_t^1)(\omega^1) P^1(\d\omega^1)= P^1(B_2) \bar{P}^*(\bar{X}_t \in C, \bar{N} \in B_1) \\
&\quad= \bar{P}^*(\bar{X}_t \in C, \bar{N} \in B)= \int_B \mu_t^{\omega^1}(C) P^1(\d\omega^1).
\end{align*}
Here, the third and fifth equalities follow from the independence of $B_1$ and $B_2$ under $P^1$ (and hence under $\bar{P}^*$). Note that such measurable sets $B$ generate $\F^1$, the $\pi$-$\lambda$ theorem thus yields $\mu_t^{\omega^1} = \Law^{\bar{P}^*}(\bar{X}_t | \F_t^1)$, $P^1$-a.s.. Consequently, we define $\bar{\mu}_t(\bar{\omega}) := \mu_t^{\omega^1}$ for any $\bar{\omega} = (\omega, \omega^1)$, which verifies \defref{relaxed_control}-(iii).
	
	We next verify the martingale condition, because the rest conditions of \defref{relaxed_control} trivially hold.  Note that, for any $\phi\in C_b^2(\R^n\times\R^n)$, $0\leq s<t\leq T$ and $\bar{\F}_s$-measurable bounded $\RV$ $\bar{h}$, it follows from \defref{relaxed_control_pathwise}-(ii) that
{\footnotesize\begin{align}\label{equivalence_martingale}
		0&=\int_{\Omega^1}\E^{P^{\omega^1}_*}\left[\left(\mathtt{M}^{\omega^1,P^{\omega^1}_*}\phi(t)-\mathtt{M}^{\omega^1,P^{\omega^1}_*}\phi(t))\bar{h}(\cdot,\omega^1)\right)\right]P^1(\omega^1).\nonumber\\
		&=\int_{\Omega^1}\left(\int_{\Omega}\left(\phi(X_t(\omega),W_t(\omega))-\phi(X_s(\omega),W_s(\omega))-\int_s^t\int_U\mathbb{L}\phi(r,X_r(\omega),W_r(\omega),\mu_r^{\omega^1},u)\Lambda_r(\omega)\d r\right.\right.\nonumber\\
		&\left.\left.-\int_s^t\int_Z\left(\phi(X_{r-}(\omega)+\gamma(r,X_{r-}(\omega),\mu_{r-}^{\omega^1},z),W_r(\omega))-\phi(X_{r-},W_r(\omega))\right)\omega^1(\d r,\d z) \right)\bar{h}(\omega,\omega^1)P^{\omega^1}_*(\d\omega)\right)P^1(\d\omega^1)\nonumber\\
		&=\int_{\Omega^1}\int_{\Omega}\left(\phi(\bar{X}_t(\bar{\omega}),\bar{W}_t(\bar\omega))-\phi(\bar{X}_s(\bar{\omega}),\bar{W}_s(\bar\omega))-\int_s^t\int_U\mathbb{L}\phi(r,\bar{X}_r(\bar{\omega}),\bar{W}_r(\bar\omega),\bar\mu_r(\bar\omega),u)\bar{\Lambda}_r(\bar{\omega})\d r\right.\nonumber\\
		&\quad\left.-\int_s^t\left(\phi(\bar{X}_{r-}(\bar{\omega})+\gamma(r,\bar{X}_{r-}(\bar{\omega}),\bar\mu_{r-}(\bar\omega),z),\bar{W}_r(\omega))-\phi(\bar{X}_{r-}(\bar{\omega}),\bar{W}_r(\bar\omega))\right)\bar{N}(\bar{\omega})(\d r,\d z) \right)\bar{h}(\bar{\omega})\bar{P}^*(\d\bar{\omega})\nonumber\\
		&=\E^{\bar{P}^*}\left[\left(\mathtt{M}^{\bar{P}^*}\phi(t)-\mathtt{M}^{\bar{P}^*}\phi(s)\right)h\right],
	\end{align}}where, in the first equality, we have exploited the fact that $\bar{h}(\cdot,\omega^1)$ is $\F_s$-measurable for every $\omega^1\in\Omega^1$ and that $(\mathtt{M}^{\omega^1,P^{\omega^1}}\phi(t))_{t\in[0,T]}$ is a $(P_*^{\omega^1},\Fb)$-martingale for $\omega^1\in\Omega^1$. Therefore, we can conclude that $\bar{P}^*\in \mathsf{R}$ after validating (i)-(iii) of \defref{relaxed_control}. Finally, we can complete proof by definition that
\begin{align}\label{leq}
\inf_{\bar{P}\in \mathsf{R}}\mathcal{J}(\bar{P})\leq \mathcal{J}(\bar{P}^*)=\int_{\Omega^1}\mathcal{J}(\omega^1,P^{\omega^1}_*)P^1(\d\omega^1)=\int_{\Omega^1}\inf_{P^{\omega^1}\in \mathsf{R}(\omega^1)}\mathcal{J}(\omega^1,P^{\omega^1})P^1(\d\omega^1).
\end{align}
Combining \equref{value_equivalence}, \equref{geq} and \equref{leq}, we can readily deduce the equivalence \equref{value_equivalent}.  For the second assertion, the first equality of \equref{pathwise_equivalence} follows from a similar argument of item {\bf (i)} of \thmref{thm:equivalence} and the second equality holds in view of the definition, \equref{value_equivalent} and \equref{value_preineq} that
\begin{align*}
{\int_{\Omega^1}}\inf_{(\boldsymbol{\mu}^{\omega^1},\hat{\alpha}^{\omega^1})\in \mathsf{R}_{\rm FP}(\omega^1)}\mathscr{J}(\omega^1,\boldsymbol{\mu}^{\omega^1},\hat{\alpha}^{\omega^1}){P^1(\d\omega^1)}=\inf_{\bar{P}\in \mathsf{R}}\mathcal{J}(\bar{P})=\int_{\Omega^1}\inf_{P^{\omega^1}\in \mathsf{R}(\omega^1)}\mathcal{J}(\omega^1,P^{\omega^1})P^1(\d\omega^1).
\end{align*} 
Thus, we complete the proof of the theorem.
\end{proof}

\begin{remark}
\thmref{thm:equivalence}-(ii), new to the literature, can be interpreted as a superposition principle in the pathwise formulation with deterministic jumping times. Such formulation differs from the classical superposition result for continuous diffusion process (c.f. Theorem 2.5 in Trevisan \cite{Trevisan}) and the jump diffusion with L\'{e}vy jumps (c.f. Rockner et al. \cite{Rockner}). In particular, the infinitesimal generator associated with deterministic jumps involves Dirac-delta functions, which fall outside the analytical framework of \cite{Rockner}.
\end{remark}

Finally, based on the preparations in the previous two-step procedure, we can now give the proof of the main result in \thmref{existence}. 
\begin{proof}[Proof of \thmref{existence}]
		The probability measure $\bar{P}^*(\d\omega,\d\omega^1)=P_*^{\omega^1}(\d\omega)P^1(\d\omega^1)$ defined in \equref{P_bar} belongs to $\mathsf{R}^{\rm opt}$ by construction and \equref{value_equivalent}. Consequently, $\mathsf{R}^{\rm opt}$ is nonempty.
\end{proof}

\begin{remark}\label{MFC_extension}

We note that the finite intensity of the Poisson random measure plays an important role to facilitate the pathwise formulation, as it ensures a well-defined pathwise construction of the stochastic integral with respect to the Poisson random measure. Moreover, the domain of the point function $p^{\omega^1}$ is finite, $\ie$, the set of jumping times $D_{p^{\omega^1}} = \{t_1^{\omega^1}, \ldots, t_k^{\omega^1}\}$ over the finite horizon contains only finitely many points. This differs substantially from the Brownian common noise, for which no analogous pathwise formulation is available and our pathwise formulation approach is not applicable.

\end{remark}

\section{Auxiliary Results and Proofs}\label{auxiliary}
\subsection{Skorokhod topology}
For the sake of completeness, we present in this subsection some basic properties of the Skorokhod space $\D:=D([0,T];\R^n)$.

Let $\Delta$ be the collection of all time change functions, $\ie$ continuous strictly increasing functions $\delta:[0,T]\to [0,T]$ with $\delta(0)=0$ and $\delta(T)=T$. The Skorokhod metric $d_{\D}(\cdot,\cdot)$ is then defined by 
\begin{align}\label{Skorokhod_metric}
d_{\D}(\boldsymbol{x},\boldsymbol{y})=\inf_{\delta\in\Delta}\left\{\|\delta-I\|_{\infty}+\|\boldsymbol{x}-\boldsymbol{y}\circ \delta\|_{\infty}\right\},\quad \forall\boldsymbol{x},\boldsymbol{y}\in\D.
\end{align}
Here, $I:[0,T]\to[0,T]$ denotes the identity mapping on $[0,T]$ and $\boldsymbol{y}\circ\delta(t):=\boldsymbol{y}(\delta(t))$.
Then, $(\D,d_{\D})$ forms a Polish space.

\begin{lemma}\label{measure_convergence_characterization}
Let $P_n,P\in\Pc_2(\D)$ with $P_n\to P$ as $n\to\infty$ in $\Pc_2(\D)$. Then, it holds that
\begin{align*}
\lim_{n\to\infty}\int_0^T\mathcal{W}_{2,\R^n}\left(P_n\circ\boldsymbol{x}(t)^{-1},P\circ\boldsymbol{x}(t)^{-1}\right)^2=0.
\end{align*}
\end{lemma}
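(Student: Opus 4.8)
The statement asserts that weak convergence $P_n \to P$ in $\Pc_2(\D)$ forces the time-averaged squared $2$-Wasserstein distance between the one-dimensional time marginals to vanish. I would prove this by a combination of (a) pointwise-in-time convergence of marginals at continuity points, (b) a uniform integrability / moment bound to upgrade weak convergence of marginals to $\mathcal{W}_2$-convergence of marginals, and (c) dominated convergence in the time variable. The one genuine subtlety, and the reason the statement is phrased as an integral rather than a pointwise assertion, is that $\boldsymbol{x}\mapsto\boldsymbol{x}(t)$ is \emph{not} continuous on $\D$ for every $t$; it is continuous at $\boldsymbol{x}$ only when $\boldsymbol{x}$ has no jump at $t$. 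So pointwise convergence of marginals can fail on an (at most countable, $P$-dependent) exceptional set of times, and we must integrate it out.

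\textbf{Step 1: pointwise convergence of marginals at a.e.\ $t$.} Fix $t\in(0,T)$ and consider the evaluation map $\pi_t:\D\to\R^n$, $\pi_t(\boldsymbol{x})=\boldsymbol{x}(t)$. Its set of discontinuity points is contained in $\{\boldsymbol{x}\in\D:\boldsymbol{x}(t-)\neq\boldsymbol{x}(t)\}$. By a standard fact for c\`adl\`ag paths (see e.g.\ Billingsley), for each path $\boldsymbol{x}\in\D$ the set of times at which $\boldsymbol{x}$ jumps is at most countable; applying Fubini to the product measure $\mathrm{Leb}_{[0,T]}\otimes P$ shows that for Lebesgue-a.e.\ $t\in[0,T]$ one has $P(\boldsymbol{x}(t-)\neq\boldsymbol{x}(t))=0$, i.e.\ $\pi_t$ is $P$-a.s.\ continuous. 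For such $t$, the continuous mapping theorem gives $P_n\circ\pi_t^{-1}\to P\circ\pi_t^{-1}$ weakly in $\Pc(\R^n)$.

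\textbf{Step 2: upgrade to $\mathcal{W}_2$ and find a dominating function.} Since $P_n\to P$ in $\Pc_2(\D)$ with the metric induced by $\sup_{t}|\boldsymbol{x}(t)|$, we have $\sup_n\E^{P_n}[\|\boldsymbol{x}\|_\infty^2]<\infty$ (convergence in $\Pc_2$ implies convergence of second moments, hence boundedness). In particular $\sup_n\int_{\R^n}|y|^2\,(P_n\circ\pi_t^{-1})(\mathrm dy)=\sup_n\E^{P_n}[|\boldsymbol{x}(t)|^2]\le\sup_n\E^{P_n}[\|\boldsymbol{x}\|_\infty^2]=:C<\infty$ for every $t$. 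Combined with the weak convergence from Step 1, this yields $\mathcal{W}_{2,\R^n}(P_n\circ\pi_t^{-1},P\circ\pi_t^{-1})\to0$ for a.e.\ $t$ (weak convergence plus uniformly bounded, hence uniformly integrable, $2$-moments is equivalent to $\mathcal{W}_2$-convergence). Moreover, for every $t$,
\begin{align*}
	\mathcal{W}_{2,\R^n}\!\left(P_n\circ\pi_t^{-1},P\circ\pi_t^{-1}\right)^2
	&\le 2\!\left(M_2(P_n\circ\pi_t^{-1})^2+M_2(P\circ\pi_t^{-1})^2\right)\le 4C,
\end{align*}
using that $\mathcal{W}_{2,\R^n}(\mu,\nu)^2\le 2(M_2(\mu)^2+M_2(\nu)^2)$ (comparing both to $\delta_0$). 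The constant function $4C$ is integrable on $[0,T]$.

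\textbf{Step 3: conclude by dominated convergence.} The integrand $t\mapsto\mathcal{W}_{2,\R^n}(P_n\circ\pi_t^{-1},P\circ\pi_t^{-1})^2$ is measurable in $t$ (it is a limit/inf of measurable functions of the marginals, which depend measurably on $t$), converges to $0$ for Lebesgue-a.e.\ $t\in[0,T]$ by Steps 1--2, and is dominated by the integrable constant $4C$ by Step 2. The dominated convergence theorem then gives $\int_0^T\mathcal{W}_{2,\R^n}(P_n\circ\boldsymbol{x}(t)^{-1},P\circ\boldsymbol{x}(t)^{-1})^2\,\mathrm dt\to0$, which is the claim. The main obstacle, as noted, is purely the failure of continuity of the evaluation maps at jump times; it is handled cleanly by the Fubini argument in Step 1, after which everything is routine.
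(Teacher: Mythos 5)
Your Step~2 contains a genuine (though reparable) error: the phrase ``uniformly bounded, hence uniformly integrable, $2$-moments'' is a non-sequitur. Bounded second moments do \emph{not} imply uniformly integrable second moments; e.g.\ $\mu_n=(1-\tfrac1n)\delta_0+\tfrac1n\delta_{\sqrt n e_1}$ has $M_2(\mu_n)^2\equiv1$, $\mu_n\to\delta_0$ weakly, yet $\mathcal W_2(\mu_n,\delta_0)\equiv1$. So the bound $\sup_n\E^{P_n}[|\boldsymbol{x}(t)|^2]\le C$ alone is not enough to upgrade the weak convergence $P_n\circ\pi_t^{-1}\to P\circ\pi_t^{-1}$ to $\mathcal W_2$-convergence, and the pointwise-in-$t$ conclusion in Step~2 as stated is unjustified. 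The fix uses the full strength of the hypothesis: $\mathcal W_{2,\D}(P_n,P)\to0$ is equivalent to weak convergence plus convergence of $\int\|\boldsymbol{x}\|_\infty^2\,\d P_n\to\int\|\boldsymbol{x}\|_\infty^2\,\d P$, which by Vitali's theorem gives \emph{uniform integrability} of $\{\|\boldsymbol{x}\|_\infty^2\}$ under $(P_n)$; since $|\boldsymbol{x}(t)|^2\le\|\boldsymbol{x}\|_\infty^2$, the family $\{|\boldsymbol{x}(t)|^2\}$ is also u.i.\ under $(P_n)$ for each $t$, and only then does weak convergence of the marginals give $\mathcal W_2$-convergence. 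With this repair, Steps~1 and 3 are correct and the argument closes.

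Once patched, your route is genuinely different from the paper's. The paper invokes the Skorokhod representation theorem to produce a coupling $X_n\to X$ a.s.\ in $\D$, bounds $\mathcal W_{2,\R^n}^2$ by $\E[|X_n(t)-X(t)|^2]$, and then passes to the limit by Fubini and two rounds of (essentially Vitali) convergence, using that Skorokhod convergence implies pointwise convergence of paths at all but countably many $t$. You instead stay at the level of the measures: a.e.\ continuity of the evaluation map $\pi_t$ (via Fubini on the countability of jump times) plus the continuous mapping theorem gives weak marginal convergence for a.e.\ $t$; the u.i.\ argument above upgrades this to $\mathcal W_2$-convergence; and DCT in $t$ finishes. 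Both are of comparable length; the paper's approach manufactures an explicit $L^2$-coupling and never needs the $\mathcal W_2$-characterization of marginal convergence, whereas yours avoids the representation theorem entirely but must be careful about exactly which uniform integrability is available and where it comes from --- which is precisely the point the current write-up glosses over.
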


\begin{proof}
Thanks to Skorokhod representation theorem, there exists a probability space $(\Omega',\F',P')$ supporting a sequence of $\D^n$-valued r.v.s $X_n,X$ such that $P_n=\Law^{P'}(X_n)$, $P=\Law^{P'}(X)$ and $X_n\to X$ in $\D$ as $n\to\infty$, $P'$-$\as$. To be more precise, let ${\cal N}$ be a $P'$-null set such that $X_n(\omega')\to X(\omega')$ in $\D$ outside ${\cal N}$. For $\omega'\notin {\cal N}$, $X_n(\omega')$ is bounded in $\D$, and hence there exists $C>0$ independent of $n$ such that $d_D(X_n(\omega'),\boldsymbol{0})\leq C$, which yields $\|X_n(\omega')\|_{\infty}\leq C$ by using \equref{Skorokhod_metric}. On the other hand, $X_n(\omega')(t)$ converges to $X(\omega')(t)$ as $n\to\infty$ almost surely, and hence we have from by DCT that, for $\omega'\notin {\cal N}$,  $\int_0^T|X_n(\omega')(t)-X(\omega')(t)|^2\d t\to 0$ as $n\to\infty$. Furthermore, since $P_n\to P$ as $n\to\infty$ in $\Pc_2(\D)$, $(P_n)_{n\geq1}$ is uniformly bounded in $\Pc_2(\D)$, i.e., there exists a constant $C>0$ ($C$ may be different from $C$ above) independent of $n$ such that $\mathcal{W}_{2,\D}(P_n,\delta_{\boldsymbol{0}})\leq C$. This yields that $\sup_{n\geq1}\E^{P'}\left[\|X_n\|_{\infty}^2\right]\leq C$.
Hence, by Fubini's theorem and DCT again, we can finally conclude the desired result: 
\begin{align*}
\int_0^T\mathcal{W}_{2,\R^n}(P_n\circ\boldsymbol{x}(t)^{-1},P\circ\boldsymbol{x}(t)^{-1})^2\d t\leq \E\left[\int_0^T|X_n(t)-X(t)|^2\d t\right]\to 0,~n\to\infty.
\end{align*} 
Thus, we complete the proof of the lemma.
\end{proof}

\begin{lemma}\label{t_i_convergence}
	Let $P_n\to P$ in $\Pc_2(\Omega)$ as $n\to\infty$  with $(P_n)_{n\geq1}\subset \mathsf{R}(\omega^1)$. Then, for any $\omega^1\in\Omega^1$, $P_n\circ X_{t_i^{\omega^1}-}^{-1}\to P\circ X_{t_i^{\omega^1}-}^{-1}$ in $\Pc_2(\R^n)$ as $n\to\infty$ for $i=1,\ldots,k$, where the time sequence $(t_i^{\omega^1})_{i=1}^k$ is introduced in the proof of \thmref{thm:equivalence}-(ii).
\end{lemma}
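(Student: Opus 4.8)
The plan is to fix $\omega^1 \in \Omega^1$ with jump times $0 = t_0^{\omega^1} < t_1^{\omega^1} < \cdots < t_k^{\omega^1} \leq t_{k+1}^{\omega^1} := T$, and to prove the claim by induction on $i = 1, \ldots, k$. The key structural fact is the decomposition from \lemref{Y_characterization}: for each $P_n \in R_{\rm M}(\omega^1,\lambda)$, on the extended space $\tilde\Omega$ we may write $\tilde X_\cdot = \tilde Y_\cdot + \int_0^\cdot \int_Z \gamma(s, \tilde X_{s-}, \tilde\mu_{s-}^{\omega^1,n}, z)\, \omega^1(\d s, \d z)$, where $\tilde Y$ is continuous and $\tilde \mu^{\omega^1,n}_{t-} = P_n \circ X_{t-}^{-1}$. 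Between consecutive jump times the process $\tilde X$ coincides with $\tilde Y$ (up to the accumulated deterministic jump corrections), so $\tilde X$ is \emph{left-continuous at each $t_i^{\omega^1}$} in the sense that $\tilde X_{t_i^{\omega^1}-} = \tilde Y_{t_i^{\omega^1}} + \sum_{j<i}\gamma(\cdots)$. Thus on the interval $[t_{i-1}^{\omega^1}, t_i^{\omega^1})$ the only genuinely stochastic ingredient in $X$ is the continuous process $Y$, and the law $P_n \circ X_{t_i^{\omega^1}-}^{-1}$ is determined by the joint behaviour of $Y$ on $[0, t_i^{\omega^1})$ together with the (deterministic, once the measure flow is fixed) jump terms.

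First I would treat the base case $i=1$. On $[0, t_1^{\omega^1})$ we have $X_t = Y_t$, and since $Y$ is continuous with $\E^{P_n}[|Y_t - Y_s|^p] \leq C|t-s|^{p/2}$ uniformly in $n$ (by \lemref{moment_p_pathwise} and \lemref{Y_characterization}), the convergence $P_n \to P$ in $\Pc_2(\Omega)$ forces $P_n \circ (Y|_{[0,t_1^{\omega^1}]})^{-1} \to P \circ (Y|_{[0,t_1^{\omega^1}]})^{-1}$ in $\Pc_2(C([0,t_1^{\omega^1}];\R^n))$; here I would invoke \lemref{measure_convergence_characterization} together with the fact that on $[0,t_1^{\omega^1})$ the Skorokhod and uniform topologies agree for these (continuous-on-that-interval) paths, plus the uniform moment bound to upgrade weak convergence to $\Pc_2$-convergence. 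Evaluation at the endpoint, $\boldsymbol{y} \mapsto \boldsymbol{y}(t_1^{\omega^1})$, is continuous on $C([0,t_1^{\omega^1}];\R^n)$, and since $X_{t_1^{\omega^1}-} = Y_{t_1^{\omega^1}}$ this gives $P_n \circ X_{t_1^{\omega^1}-}^{-1} \to P \circ X_{t_1^{\omega^1}-}^{-1}$ in $\Pc_2(\R^n)$.

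For the inductive step, suppose the convergence holds for all indices $\leq i-1$; equivalently (via the jump relation $X_{t_j^{\omega^1}} = X_{t_j^{\omega^1}-} + \gamma(t_j^{\omega^1}, X_{t_j^{\omega^1}-}, \mu_{t_j^{\omega^1}-}^{\omega^1,n}, p^{\omega^1}(t_j^{\omega^1}))$ and \assref{ass1}-(A2), which makes $\gamma$ jointly continuous and Lipschitz in the measure argument) the laws $P_n \circ X_{t_j^{\omega^1}}^{-1}$ also converge for $j \leq i-1$. On the interval $[t_{i-1}^{\omega^1}, t_i^{\omega^1})$, the process $X$ is again continuous and its dynamics are driven by coefficients evaluated along $\mu^{\omega^1,n}$; combining the convergence of the whole law $P_n \to P$ on $\Omega$ (which controls the joint law of $(X, \Lambda)$ up to time $t_i^{\omega^1}-$), the uniform $p$-moment bound of \lemref{moment_p_pathwise}, \lemref{measure_convergence_characterization} to pass the measure-flow arguments $\mu^{\omega^1,n}_{t-} \to \mu^{\omega^1}_{t-}$, and the continuity of the evaluation map at $t_i^{\omega^1}$ on the relevant path space, I would conclude $P_n \circ X_{t_i^{\omega^1}-}^{-1} \to P \circ X_{t_i^{\omega^1}-}^{-1}$ in $\Pc_2(\R^n)$, completing the induction.

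The main obstacle I anticipate is the subtle interplay between the Skorokhod topology on $\Omega$ and the convergence of the time-$t$ marginals $P_n \circ X_{t_i^{\omega^1}-}^{-1}$: in general $\omega_n \to \omega$ in $\D$ does \emph{not} imply $\omega_n(t) \to \omega(t)$ at a fixed time $t$, and $t_i^{\omega^1}$ is precisely a (deterministic) jump location of every path in the support. The point that rescues the argument is that the jump times are common to all paths and the pre-jump value $X_{t_i^{\omega^1}-}$ is obtained as the left limit of a process that is genuinely \emph{continuous} on $[t_{i-1}^{\omega^1}, t_i^{\omega^1})$; so one must carefully exploit the $Y$-representation of \lemref{Y_characterization} to reduce everything to evaluation of continuous paths, where the topological pathology disappears. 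Making this reduction rigorous — in particular showing that the map $\tilde\omega \mapsto \tilde X_{t_i^{\omega^1}-}$ is continuous on the (closed) subset of $\tilde\Omega$ cut out by the $Y$-relation, uniformly across the sequence — is the technical heart of the proof.
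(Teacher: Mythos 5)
Your plan correctly identifies the real difficulty (Skorokhod convergence does not control evaluation at a fixed time, and $t_i^{\omega^1}$ is a jump time of every path in the support), but it does not resolve it: the decisive step --- that $\boldsymbol{x}\mapsto\boldsymbol{x}(t_i^{\omega^1}-)$ is continuous on the relevant subset of paths, or equivalently that ``the Skorokhod and uniform topologies agree'' there --- is asserted and then explicitly deferred as ``the technical heart''. That is precisely what a proof of the lemma has to supply, so as written the argument is circularly incomplete at its center. There is a second, more structural gap: your base case and inductive step use the $Y$-representation and the continuity of paths on $[t_{i-1}^{\omega^1},t_i^{\omega^1})$ \emph{under the limit law} $P$, but the lemma only assumes $(P_n)\subset R_{\rm M}(\omega^1,\lambda)$ and $P_n\to P$; it does not assume $P\in R_{\rm M}(\omega^1,\lambda)$. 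Indeed the lemma is invoked inside the closedness argument of \lemref{compactness}, so granting the limit the $Y$-relation of \lemref{Y_characterization} would be circular. Any correct proof must first establish where $P\circ X^{-1}$ lives; the paper does this by showing the set $\C^{\omega^1}$ of paths continuous on each inter-jump interval is closed and applying the Portmanteau theorem, a step absent from your plan.

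The paper then avoids proving evaluation-continuity altogether: after Skorokhod representation ($X_n'\to X'$ a.s.\ in $\D$), it bounds $\E^{P'}[|X_n'(t_i^{\omega^1}-)-X'(t_i^{\omega^1}-)|^2]$ by three terms built around an auxiliary time $t\in(t_{i-1}^{\omega^1},t_i^{\omega^1})$: the increments $X_n'(t_i^{\omega^1}-)-X_n'(t)$ are controlled \emph{uniformly in $n$} by the $L^2$-modulus $\E^{P'}[|X_n'(t)-X_n'(s)|^2]\le C|t-s|$ from \lemref{moment_p_pathwise}; the middle term converges because $t$ is ($P'$-a.s.) a continuity point of $X'$ (this is where the support property is used) so $X_n'(t)\to X'(t)$, and the uniform $p$-moment bound ($p>2$) gives the integrability needed for DCT and for upgrading to $\mathcal{W}_{2,\R^n}$-convergence; the last term is handled by DCT and left-continuity of $X'$ at $t_i^{\omega^1}$. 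Note also that your induction over $i$ and the tracking of the measure flows $\mu^{\omega^1,n}$ are unnecessary for this lemma: the pre-jump marginal is a functional of the path alone, so once the support and equicontinuity-in-$L^2$ facts are in place all indices $i$ are treated simultaneously; keeping the flows in the argument only reintroduces the quantities whose convergence you are trying to prove.
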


\begin{proof}
Fix $\omega^1\in\Omega^1$, and recall the time sequence $(t_i^{\omega^1})_{i=1}^k$ introduced in the proof of \lemref{moment_p_pathwise} with $t_0^{\omega^1}=0$ and $t_{k+1}^{\omega^1}=T$. Let us define a subset of $\D^n$ as
\begin{align*}
\C^{\omega^1}:=\left\{
\boldsymbol{x}\in\D^n;~\boldsymbol{x}|_{[t_i^{\omega^1},t_{i+1}^{\omega^1})}\in C([t_i^{\omega^1},t_{i+1}^{\omega^1});\R^n),~i=0,1,\ldots,k\right\}.
	\end{align*}
We first show that $\C^{\omega^1}$ is closed. Let $\boldsymbol{x}_n\to\boldsymbol{x}$ in $\D$ as $n\to\infty$ with $(\boldsymbol{x}_n)_{n\geq1}\subset \C^{\omega^1}$. There exists a sequence $\delta_n\in\Delta$ such that $\|\boldsymbol{x}_n\circ\delta_n-\boldsymbol{x}\|_{\infty}+\|\delta_n-I\|_{\infty}\to 0$ as $n\to\infty$. Then, for any $t,s\in [t_i^{\omega^1},t_{i+1}^{\omega^1})$, we have $\delta^n(t),\delta^n(s)\in [t_i^{\omega^1},t_{i+1}^{\omega^1})$ for $n$ large enough. Furthermore, for any $\epsilon>0$, choose $n$ large enough such that $\|\boldsymbol{x}_n\circ\delta_n-\boldsymbol{x}\|_{\infty}<\epsilon/3$. Since $\boldsymbol{x}_n$ is continuous on $[t_i^{\omega^1},t_{i+1}^{\omega^1})$, there exists $\kappa>0$ such that $|\boldsymbol{x}_n(\delta_n(t))-\boldsymbol{x}(\delta_n(s))|<\epsilon/3$ when $|t-s|<\kappa$. Hence, we have $|\boldsymbol{x}(t)-\boldsymbol{x}(s)|\leq |\boldsymbol{x}_n(\delta_n(t))-\boldsymbol{x}(t)|+|\boldsymbol{x}_n(\delta_n(s))-\boldsymbol{x}(s)|+|\boldsymbol{x}_n(\delta_n(t))-\boldsymbol{x}_n(\delta_n(s))|\leq \epsilon$, whenever $|t-s|<\kappa$, which shows that $\boldsymbol{x}|_{(t_i^{\omega^1},t_{i+1}^{\omega^1})}\in C((t_i^{\omega^1},t_{i+1}^{\omega^1});\R^n)$. Note that $\boldsymbol{x}\in\D$, and hence is right continuous at $t_i^{\omega^1}$, which implies that $\boldsymbol{x}\in\C^{\omega^1}$ by the arbitrariness of $i$.
	
Note that $P_n\circ X^{-1}$ is supported on $\C^{\omega^1}$  by applying \lemref{moment_p_pathwise}. It follows from Portmaneau theorem that $P\circ X^{-1}(\C^{\omega})\geq\limsup_{n\to\infty}P_n\circ X^{-1}(\C^{\omega^1})=1$, which yields that $P\circ X^{-1}$ is also supported on $\C^{\omega^1}$.	Due to Skorokhod representation theorem, there exists a probability space $(\Omega',\F',P')$ supporting a sequence of $\D$-valued r.v.s $X_n',X'$ such that $P_n\circ X^{-1}=\Law^{P'}(X_n')$, $P\circ X^{-1}=\Law^{P'}(X')$ and $X_n'\to X'$ in $\D$, $P'$-$\as$.  Thanks to \lemref{moment_p_pathwise} again, there exists  a constant $C>0$ depending on $M,T$ such that
\begin{align}\label{continuous_ex}
\sup_{n\geq1}\E^{P'}\left[|X_n'(t)-X_n'(s)|^2\right]\leq C|t-s|.
\end{align}
Note that $X'(t)\to X'(t_i^{\omega^1}-)$ as $t\uparrow t_i^{\omega^1}$ $P'$-$\as$ and $\E^{P'}\left[\|X'\|_{\infty}\right]<\infty$ by following the same proof as in \lemref{measure_convergence_characterization}. We then conclude by DCT that
\begin{align}\label{X_convergence}
\lim_{t\uparrow t_i^{\omega^1}}\E^{P'}\left[\left|X'(t)-X'(t_i^{\omega^1}-)\right|^2\right]=0.
\end{align}
It holds by Cauchy's inequality that
\begin{align*}
&\mathcal{W}_{2,\R^n}\left(P_n\circ X_{t_i^{\omega^1}-}^{-1},P\circ X_{t_i^{\omega^1}-}^{-1}\right)\leq\E^{P'}\left[|X'(t_i^{\omega^1}-)-X_n'(t_i^{\omega^1}-)|^2\right]\leq 3\E^{P'}\left[|X_n'(t)-X_n'(t_i^{\omega^1}-)|^2\right]\\
&\qquad+3\E^{P'}\left[|X_n'(t)-X'(t)|^2\right]+3\E^{P'}\left[|X'(t_i^{\omega^1}-)-X'(t)|^2\right]=:I_1+I_2+I_3.
\end{align*}
In view of \eqref{X_convergence}, for any $\epsilon > 0$, there exists a $\kappa > 0$ such that $\E^{P'}[|X'(t) - X'(t_i^{\omega^1}-)|^2] < \frac{\epsilon}{3}$, whenever $t_i^{\omega^1} - t < \kappa $. We can further choose $\kappa $ small enough so that $t > t_{i-1}^{\omega^1} $, ensuring that $X' $ is continuous at $t$, and $t_i^{\omega^1} - t < \epsilon / (9C) $.  
	
Since $X_n' \to X' $ in $\D$, we have $X_n'(t) \to X'(t)$, as $n\to\infty$, $P' $-a.s. Then, by DCT (as in the proof of \lemref{measure_convergence_characterization}), we obtain $I_2 \to 0 $ as $n \to \infty $. Therefore, there exists $N > 0 $ such that $I_2 < \epsilon / 3 $ for all $n > N $.  As a result, we conclude that  $I_1 + I_2 + I_3 \leq 3C \cdot \frac{\epsilon}{9C} + \frac{\epsilon}{3} + \frac{\epsilon}{3} = \epsilon$,
	whenever $n > N$, where we have used \equref{continuous_ex}. 
\end{proof}

\subsection{Concatenation techniques}

This subsection is devoted to preparations for the technical proof of \thmref{thm:equivalence}-{\bf (ii)}, which relies on concatenation arguments. Our approach follows the methodology outlined in Section 6.1 of Stroock and Varadhan~\cite{Stroock} in which concatenation techniques are developed in the context of continuous diffusion. To start with, let $\boldsymbol{\mu}=(\mu_t)_{t\in [0,T]}$ be a c\`{a}dl\`{a}g measure flow and $p:D_p\to Z$ be a point function with a finite domain $D_p\subset [0,T]$. Fix $0\leq t_1<t_2<t_3\leq T$ such that $t_1,t_2\in D_p$, and define the following sets:
\begin{align*}
	\mathcal{X}_1&:=\{\boldsymbol{x}\in D([t_1,t_2];\R^n),\boldsymbol{x}(t_2)=\boldsymbol{x}(t_2-)\},\quad
	\mathcal{X}_2:=\{\boldsymbol{x}\in D([t_2,t_3];\R^n),\boldsymbol{x}(t_3)=\boldsymbol{x}(t_3-)\}.
\end{align*}
Then, we have
\begin{lemma}\label{pathwise_concatenation}
For any $\eta\in\mathcal{X}_1$, let $P^\eta(t_2)\in\Pc_2(\mathcal{X}_2)$ such that 
\begin{align*}
P^{\eta(t_2)}\left(\left\{\boldsymbol{x}(t_2)=\eta(t_2)+\gamma(t_2,\eta(t_2),\mu_{t_2-},p(t_2)) \right\}\right)=1.    
\end{align*}
Then, there exists a unique probability measure on $D([t_1,t_3];\R^n)$, denoted by $\delta_{\eta}\otimes_{t_2}P^{\eta(t_2)}$, such that $\delta_{\eta}\otimes_{t_2}P^{\eta(t_2)}(\boldsymbol{x}(t)=\eta(t),\forall t\in [t_1,t_2))=1$ and $\delta_{\eta}\otimes_{t_2}P^{\eta(t_2)}(A)=P^{\eta(t_2)}(A)$ for all $A\in \sigma(\boldsymbol{x}(t)$;~ $t\in [t_2,t_3])$.
\end{lemma}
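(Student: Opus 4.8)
The statement is an elementary gluing of two path-measures at the deterministic time $t_2$, so the plan is to construct $\delta_\eta\otimes_{t_2}P^{\eta(t_2)}$ directly on the product space and then push it forward onto $D([t_1,t_3];\R^n)$. First I would introduce the splicing map $\Psi:\{\eta\}\times\mathcal{X}_2\to D([t_1,t_3];\R^n)$ defined by $\Psi(\eta,\boldsymbol{y})(t)=\eta(t)$ for $t\in[t_1,t_2)$ and $\Psi(\eta,\boldsymbol{y})(t)=\boldsymbol{y}(t)$ for $t\in[t_2,t_3]$. The key point is that this really lands in $D([t_1,t_3];\R^n)$: on $[t_1,t_2)$ the path is càdlàg because $\eta\in\mathcal{X}_1$, on $(t_2,t_3]$ it is càdlàg because $\boldsymbol{y}\in\mathcal{X}_2$, and at the junction point $t_2$ right-continuity holds since the value equals $\boldsymbol{y}(t_2)$ while the left limit is $\eta(t_2-)=\eta(t_2)$, so the only (possible) jump at $t_2$ is $\boldsymbol{y}(t_2)-\eta(t_2)=\gamma(t_2,\eta(t_2),\mu_{t_2-},p(t_2))$, which is finite and well-defined since the support condition on $P^{\eta(t_2)}$ pins down $\boldsymbol{y}(t_2)$. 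Then I would simply \emph{define} $\delta_\eta\otimes_{t_2}P^{\eta(t_2)}:=P^{\eta(t_2)}\circ\Psi(\eta,\cdot)^{-1}$, i.e. the pushforward of $P^{\eta(t_2)}$ under $\boldsymbol{y}\mapsto\Psi(\eta,\boldsymbol{y})$, which is a Borel probability measure on $D([t_1,t_3];\R^n)$ because $\Psi(\eta,\cdot)$ is measurable (continuity of the Skorokhod restriction/concatenation maps, as used in Section 6.1 of Stroock and Varadhan~\cite{Stroock}).

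Next I would verify the two defining properties. For the first, $\delta_\eta\otimes_{t_2}P^{\eta(t_2)}(\boldsymbol{x}(t)=\eta(t),\,\forall t\in[t_1,t_2))=1$ is immediate from the construction since $\Psi(\eta,\boldsymbol{y})(t)=\eta(t)$ on $[t_1,t_2)$ for every $\boldsymbol{y}$. For the second, take $A\in\sigma(\boldsymbol{x}(t):t\in[t_2,t_3])$; by the monotone class theorem it suffices to treat $A$ of the form $\{\boldsymbol{x}(s_1)\in B_1,\dots,\boldsymbol{x}(s_m)\in B_m\}$ with $s_j\in[t_2,t_3]$, and for such $A$ we have $\Psi(\eta,\cdot)^{-1}(A)=\{\boldsymbol{y}(s_1)\in B_1,\dots,\boldsymbol{y}(s_m)\in B_m\}$, so $(\delta_\eta\otimes_{t_2}P^{\eta(t_2)})(A)=P^{\eta(t_2)}(\Psi(\eta,\cdot)^{-1}(A))=P^{\eta(t_2)}(A)$; note here I identify $A$ with its trace on $\mathcal{X}_2\subset D([t_2,t_3];\R^n)$, which is legitimate because $P^{\eta(t_2)}$ is carried by $\mathcal{X}_2$.

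For uniqueness, suppose $Q$ is another probability measure on $D([t_1,t_3];\R^n)$ satisfying the two conditions. The $\sigma$-algebra $\mathcal{B}(D([t_1,t_3];\R^n))$ is generated by the coordinate evaluations $\{\boldsymbol{x}(t):t\in[t_1,t_3]\}$, hence by the two sub-$\sigma$-algebras $\sigma(\boldsymbol{x}(t):t\in[t_1,t_2))$ and $\sigma(\boldsymbol{x}(t):t\in[t_2,t_3])$ together; the first condition forces $Q$ to be degenerate (equal to $\delta_\eta$) on the restriction to $[t_1,t_2)$, in particular the restriction to $[t_1,t_2)$ is $Q$-a.s.\ a constant path equal to $\eta$, so any event in $\sigma(\boldsymbol{x}(t):t\in[t_1,t_2))$ has $Q$-probability $0$ or $1$ and is thus $Q$-independent of everything; combining this with the second condition, which fixes the law on $\sigma(\boldsymbol{x}(t):t\in[t_2,t_3])$ to be $P^{\eta(t_2)}$, pins down $Q$ on a generating $\pi$-system of $\mathcal{B}(D([t_1,t_3];\R^n))$, so $Q=\delta_\eta\otimes_{t_2}P^{\eta(t_2)}$ by Dynkin's $\pi$--$\lambda$ theorem.

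\textbf{Main obstacle.} Nothing here is deep; the only genuinely delicate point is the càdlàg regularity of the spliced path at the junction $t_2$ and the measurability of the splicing map with respect to the Skorokhod topology. I would handle the former by the explicit left-limit/right-limit computation above (using $\eta(t_2)=\eta(t_2-)$ from $\eta\in\mathcal{X}_1$ and the support constraint on $P^{\eta(t_2)}$ to identify $\boldsymbol{x}(t_2)$), and the latter by citing the corresponding continuity/measurability properties of restriction and concatenation maps in the Skorokhod space, exactly as in Section 6.1 of Stroock and Varadhan~\cite{Stroock}; I would also remark that the argument does not use any property of $\boldsymbol{\mu}$ or $p$ beyond the fact that $\gamma(t_2,\eta(t_2),\mu_{t_2-},p(t_2))$ is a fixed finite vector, so the construction goes through verbatim.
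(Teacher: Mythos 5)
Your proof is correct and takes essentially the same approach as the paper's: both construct $\delta_\eta\otimes_{t_2}P^{\eta(t_2)}$ as the pushforward under the splicing map $\Psi$ of $P^{\eta(t_2)}$ (or equivalently, in the paper's phrasing, of the product measure $\delta_\eta\otimes P^{\eta(t_2)}$ restricted to the compatibility set $\mathcal{X}$). You additionally spell out the uniqueness argument, which the paper dismisses as trivial, and you give a more explicit account of the c\`adl\`ag regularity at the junction $t_2$; note that this regularity holds for any $\boldsymbol{y}\in\mathcal{X}_2$ regardless of the support constraint on $P^{\eta(t_2)}$, so the remark that the support condition ``pins down'' the jump is not needed for $\Psi$ to land in $D([t_1,t_3];\R^n)$.
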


\begin{proof}
	The uniqueness is trivial. For the existence, let us set
	\begin{align*}
		\mathcal{X}=\left\{(\boldsymbol{x}_1,\boldsymbol{x}_2)\in\mathcal{X}_1\times\mathcal{X}_2;~\boldsymbol{x}_2(t_2)=\boldsymbol{x}_1(t_2)+\gamma(t_2,\boldsymbol{x}_1(t_2),\mu_{t_2-},p(t_2)) \right\}.
	\end{align*}
	Then, $\mathcal{X}$ can be easily verified to be a measurable subset of $D([t_1,t_2];\R^n)\times D([t_2,t_3];\R^n)$. By Fubini theorem,  $\delta_{\eta}\otimes P^{\eta(t_2)}(\mathcal{X})=P^{\eta(t_2)}\left(\left\{\boldsymbol{x}(t_2)=\eta(t_2)+\gamma(t_2,\eta(t_2),\mu_{t_2-},p(t_2)) \right\}\right)=1$,
	where $\delta\otimes P^{\eta(t_2)}$ denotes the product measure of $\delta_{\eta}$ and $P^{\eta(t_2)}$. We then define the mapping $\Psi:\mathcal{X}\to D([t_1,t_3];\R^n)$ by\begin{align}\label{Psi}
		\Psi(\boldsymbol{x}_1,\boldsymbol{x}_2)=\boldsymbol{x}_1(t){\bf1}_{\{t_1\leq t<t_2\}}+\boldsymbol{x}_2(t){\bf1}_{\{t_2\leq t\leq t_3\}},\quad \forall (t,\boldsymbol{x}_1,\boldsymbol{x}_2)\in[t_1,t_3]\times\mathcal{X},
	\end{align}
	which is clearly measurable. Therefore, $(\delta\otimes P^{\eta(t_2)})\circ\Psi^{-1}$ is a probability measure on $D([t_1,t_3];\R^n)$ and it is easy to check that this is the desired probability measure $\delta\otimes_{t_2}P^{\eta(t_2)}$.
\end{proof}

\begin{lemma}\label{concatenation}
	Let $P_1\in\Pc_2(\mathcal{X}_1)$, and for $P_1\circ \boldsymbol{x}(t_2-)^{-1}$-$\as$ $x\in\R^n$, $x\to P^x$ be a measurable mapping from $\R^n$ to $\Pc_2(\mathcal{X}_2)$ such that $P^x\left(\left\{\boldsymbol{x}(t_2)=x+\gamma(t_2,x,\mu_{t_2-},p(t_2))\right\}\right)=1$. 
	Then, there exists a unique probability measure on $D([t_1,t_3];\R^n)$, denoted by $P_1\otimes_{t_2} P^{\cdot}$, such that $P_1\otimes_{t_2} P^{\cdot}$ equals $P_1$ on $\sigma(\boldsymbol{x}(t);~t\in [t_1,t_2))$ and $\delta_{\eta}\otimes_{t_2}P^{\eta(t_2)}$ is an r.c.p.d. of $P_1\otimes_{t_2} P^{\cdot}$ given $\sigma(\boldsymbol{x}(t);~t\in [t_1,t_2))$ for $P_1$-$\as$ $\eta\in\mathcal{X}_1$. In particular, suppose that $(\theta_t)_{t\in [t_1,t_3]}$ is an $\Fb$-progressively measurable c\`{a}dl\`{a}g process such that $\theta(t)$ is $P_1 \otimes_{t_2} P^{\cdot}$-integrable, $(\theta(t_2-\wedge t))_{t\in [t_1,t_3]}$ is a $P_1$-martingale  and $(\theta(t)-\theta(t_2-\wedge t))_{t\in [t_1,t_3]}$ is a $\delta_{\eta}\otimes P^{\eta(t_2)}$-martingale for $P_1$-$\as$ $\eta\in\mathcal{X}_1$, where $t_2-\wedge t:=t{\bf1}_{\{t<t_2\}}+t_2{\bf1}_{\{t\geq t_2\}}$. Then $(\theta(t))_{t\in[t_1,t_3]}$ is a $P_1\otimes_{t_2} P^{\cdot}$-martingale.
\end{lemma}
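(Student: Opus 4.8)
The plan is to construct the measure $P_1 \otimes_{t_2} P^{\cdot}$ by disintegration and gluing, then verify the martingale property by conditioning on the pre-$t_2$ history. First I would take the regular conditional probability distribution (r.c.p.d.) of $P_1$ given $\sigma(\boldsymbol{x}(t);~t\in[t_1,t_2))$; since $\mathcal{X}_1$ is a Polish space and this $\sigma$-algebra is countably generated, the r.c.p.d. exists, and because the conditioning $\sigma$-algebra determines the path on $[t_1,t_2)$ (hence, by right-continuity and the fact that $\boldsymbol{x}(t_2)=\boldsymbol{x}(t_2-)$ on $\mathcal{X}_1$, the whole path), this r.c.p.d. is $P_1$-a.s.\ equal to $\delta_{\eta}$ for $\eta \in \mathcal{X}_1$. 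Next, for $P_1$-a.s.\ $\eta$, the value $\eta(t_2) = \eta(t_2-)$ is exactly a point at which $x \mapsto P^x$ is defined, and the hypothesis $P^{\eta(t_2)}(\boldsymbol{x}(t_2)=\eta(t_2)+\gamma(t_2,\eta(t_2),\mu_{t_2-},p(t_2)))=1$ puts us precisely in the setting of \lemref{pathwise_concatenation}. So I define $P_1\otimes_{t_2}P^{\cdot}$ as the unique measure on $D([t_1,t_3];\R^n)$ whose marginal on $\sigma(\boldsymbol{x}(t);~t\in[t_1,t_2))$ is $P_1$ and whose r.c.p.d.\ given that $\sigma$-algebra is $\delta_{\eta}\otimes_{t_2}P^{\eta(t_2)}$; concretely, one checks that $\eta \mapsto \delta_{\eta}\otimes_{t_2}P^{\eta(t_2)}$ is a measurable kernel (measurability follows from the measurability of $x\mapsto P^x$ and the explicit form \equref{Psi} of the gluing map $\Psi$), and then integrates this kernel against $P_1(\d\eta)$. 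Uniqueness is immediate since the two stated properties pin down $\int f\, \d(P_1\otimes_{t_2}P^{\cdot})$ on a generating class of bounded measurable $f$ via the tower property.

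For the martingale assertion, fix $t_1 \le s < t \le t_3$, a bounded $\F_s$-measurable random variable $h$, and write $\mathbb{E}$ for expectation under $P := P_1\otimes_{t_2}P^{\cdot}$. The goal is $\mathbb{E}[(\theta(t)-\theta(s))h]=0$. I split into the three cases according to where $s,t$ sit relative to $t_2$. If $t < t_2$: then both $\theta(t),\theta(s)$ and $h$ are measurable with respect to $\sigma(\boldsymbol{x}(r);~r\in[t_1,t_2))$, on which $P$ agrees with $P_1$, and $(\theta(t_2-\wedge t))_{t}=(\theta(t))_t$ on this range is a $P_1$-martingale, so the identity holds. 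If $s \ge t_2$: condition on $\sigma(\boldsymbol{x}(r);~r\in[t_1,t_2))$ and use that under the r.c.p.d.\ $\delta_{\eta}\otimes_{t_2}P^{\eta(t_2)}$ the process $(\theta(t)-\theta(t_2-\wedge t))_{t}=(\theta(t)-\theta(t_2))_t$ restricted to $[t_2,t_3]$ is a martingale; since $h$ is $\F_s$-measurable with $s\ge t_2$, the increment $\theta(t)-\theta(s)$ vanishes in conditional expectation $\delta_{\eta}\otimes_{t_2}P^{\eta(t_2)}$-a.s., then average over $\eta$. The remaining case $s < t_2 \le t$ is the crux: I write $\theta(t)-\theta(s) = (\theta(t_2-\wedge t)-\theta(t_2-\wedge s)) + (\theta(t)-\theta(t_2-\wedge t))$, where $t_2-\wedge s = s$ and $t_2-\wedge t = t_2$. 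The first bracket is $\theta(t_2-)-\theta(s)$ up to the jump, and handling it requires combining the $P_1$-martingale property of $(\theta(t_2-\wedge\cdot))$ up to time $t_2$ — here one passes to the left limit using right-continuity of $\theta$ and an approximation $\theta(t_2-\wedge t_n)\to\theta(t_2-)$ along $t_n\uparrow t_2$, with uniform integrability supplied by the $P$-integrability hypothesis — with the fact that the $P_1$-martingale and the r.c.p.d.\ share the value $\theta(t_2-\wedge\cdot)$ at $t_2$; the second bracket is handled by conditioning on $\sigma(\boldsymbol{x}(r);~r\in[t_1,t_2))$ exactly as in the case $s\ge t_2$, noting $h\mathbf{1}$ is still adapted and that $(\theta(t)-\theta(t_2))$ is a $\delta_\eta\otimes P^{\eta(t_2)}$-martingale on $[t_2,t_3]$ starting from $0$ at $t_2$.

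The main obstacle I anticipate is the bookkeeping at time $t_2$ in the mixed case: precisely matching the terminal value $\theta(t_2-\wedge t)$ of the "first-half" $P_1$-martingale with the initial value of the "second-half" r.c.p.d.-martingale, and justifying the interchange of limit and expectation when passing $t_n \uparrow t_2$ to recover $\theta(t_2-)$ — this is where the c\`adl\`ag regularity of $\theta$, its $P$-integrability, and the progressive measurability are all used. Everything else (measurability of the kernel, existence/uniqueness, the two extreme cases) is routine given \lemref{pathwise_concatenation} and standard r.c.p.d.\ theory as in Section~6.1 of Stroock and Varadhan~\cite{Stroock}. I would also remark that one should only need the martingale identity tested against $h$ ranging over a convergence-determining class of bounded $\F_s$-measurable functions, which slightly simplifies the measurability arguments.
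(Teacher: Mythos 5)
Your construction matches the paper's: the paper likewise reduces the first assertion to measurability of $\eta\mapsto\delta_\eta\otimes_{t_2}P^{\eta(t_2)}$ (checked on cylinder sets via \equref{decomposition}, using that $\eta\mapsto\eta(t_2)=\eta(t_2-)$ is $\sigma(\boldsymbol{x}(t);~t\in[t_1,t_2))$-measurable together with the measurability of $x\mapsto P^x$) and then sets $P_1\otimes_{t_2}P^{\cdot}:=\E^{P_1}[\delta_\eta\otimes_{t_2}P^{\eta(t_2)}]$. Your case analysis for the martingale step is also in the same spirit as the paper's, which handles all three configurations in one display via the chain
\[
\E^{P}[\theta(t)\mathbf{1}_A]
=\E^{P}\bigl[\E^{\delta_\eta\otimes_{t_2}P^{\eta(t_2)}}[\theta(t)\mathbf{1}_A]\bigr]
=\E^{P}\bigl[\E^{\delta_\eta\otimes_{t_2}P^{\eta(t_2)}}[\theta((t_2-\wedge t)\vee s)\mathbf{1}_A]\bigr]
\]
followed by the split over the deterministic indicators $\mathbf{1}_{\{s<t_2\}}$, $\mathbf{1}_{\{t_2\le s\}}$.

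The place where you go off track is the ``crux'' case $s<t_2\le t$. You read $\theta(t_2-\wedge t)$ as requiring the left limit $\theta(t_2-)$ and propose to recover it by an approximation $\theta(t_n)\to\theta(t_2-)$ along $t_n\uparrow t_2$ with a uniform-integrability argument, and you flag ``interchange of limit and expectation'' as the main obstacle. This is a misreading of the lemma's own definition: $t_2-\wedge t:=t\,\mathbf{1}_{\{t<t_2\}}+t_2\,\mathbf{1}_{\{t\ge t_2\}}=t\wedge t_2$ is a \emph{deterministic} time, so $\theta(t_2-\wedge t)=\theta(t_2)$ for $t\ge t_2$, and the $P_1$-martingale hypothesis on the stopped process $(\theta(t\wedge t_2))_{t\in[t_1,t_3]}$ applies \emph{directly} at the pair $(s,t_2)$ to give $\E^{P_1}[\theta(t_2)\mathbf{1}_A]=\E^{P_1}[\theta(s)\mathbf{1}_A]$ for $\F_s$-measurable $A$. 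No $t_n\uparrow t_2$, no left-limit passage, no uniform-integrability step appears in, or is needed for, the paper's argument; the paper simply combines the $\delta_\eta\otimes_{t_2}P^{\eta(t_2)}$-martingale identity (to pass from $\theta(t)$ to $\theta((t_2\wedge t)\vee s)$) with the $P_1$-martingale identity (to pass from $\theta(t_2)$ to $\theta(s)$). Your first two cases and the overall decomposition $\theta(t)-\theta(s)=(\theta(t_2\wedge t)-\theta(t_2\wedge s))+(\theta(t)-\theta(t_2\wedge t))$ are correct; replace the approximation argument in the third case by a direct application of the two martingale hypotheses at the deterministic times $s$, $t_2$, $t$, and the proof closes exactly as in the paper.
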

\begin{proof}
	To prove the first assertion, it suffices to verify that the mapping  
	\begin{align}\label{measurable_concatenation}
		\eta \mapsto \delta_{\eta} \otimes_{t_2} P^{\eta(t_2)},\quad\forall \eta\in\mathcal{X}_1
	\end{align}
	is measurable with respect to the $\sigma$-algebra $\sigma(\boldsymbol{x}(t);~t \in [t_1, t_2))$. Once done, we can define $P_1 \otimes_{t_2} P^{\cdot} := \mathbb{E}^{P_1} \left[ \delta_{\eta} \otimes_{t_2} P^{\eta(t_2)} \right]$,  
	which gives the desired probability measure. Let $A:=\{\boldsymbol{x}(s_1)\in\Gamma_1,\dots,\boldsymbol{x}(s_m)\in\Gamma_m\}$ with $m\geq1$, $t_1\leq s_1<\dots<s_j<t_2\leq s_{j+1}<\dots<s_m\leq t_3$ and $\Gamma_1,\dots,\Gamma_m\in\mathcal{B}(\R^n)$. Then, it holds that
	\begin{align}\label{decomposition}
		\delta_{\eta} \otimes_{t_2} P^{\eta(t_2)}(A)={\bf1}_{\Gamma_1}(\eta(s_1))\cdots{\bf1}_{\Gamma_j}(\eta(s_j))P^{\eta(t_2)}(\boldsymbol{x}(s_{j+1})\in\Gamma_{j+1},\dots,\boldsymbol{x}(s_m)\in\Gamma_m).
	\end{align}
	Note that, for $\eta\in\mathcal{X}_1$, the mapping $\eta\mapsto \eta(t_2)=\eta(t_2-)$ is $\sigma(\boldsymbol{x}(t);~t\in [t_1,t_2))$-measurable by construction. Hence, the measurability of the mapping \equref{measurable_concatenation} follows immediately from the measurability of the mapping $x\mapsto P^x$.
	
	For the second assertion, let $t_1\leq s<t\leq t_3$ and $A\in\sigma(\boldsymbol{x}(s);~t_1\leq r\leq s)$ be given. It holds that
	\begin{align*}
		\E^{P_1\otimes_{t_2} P^{\cdot}}\left[\theta(t){\bf1}_A\right]&=\E^{P_1\otimes_{t_2} P^{\cdot}}\left[\E^{\delta_{\eta} \otimes_{t_2} P^{\eta(t_2)}}\left[\theta(t){\bf1}_A\right]\right]=\E^{P_1\otimes_{t_2} P^{\cdot}}\left[\E^{\delta_{\eta} \otimes_{t_2} P^{\eta(t_2)}}\left[\theta((t_2-\wedge t)\vee s){\bf1}_A\right]\right]\\
		&=\E^{P_1\otimes_{t_2} P^{\cdot}}\left[\theta(s){\bf1}_A{\bf1}_{\{t_2\leq s\}}\right]+\E^{P_1\otimes_{t_2} P^{\cdot}}\left[\E^{\delta_{\eta} \otimes_{t_2} P^{\eta(t_2)}}\left[\theta(t_2-\wedge t){\bf1}_A{\bf1}_{\{s<t_2\}}\right]\right]\\
		&=\E^{P_1\otimes_{t_2} P^{\cdot}}\left[\theta(s){\bf1}_A{\bf1}_{\{t_2\leq s\}}\right]+\E^{P_1\otimes_{t_2} P^{\cdot}}\left[\theta(s){\bf1}_A{\bf1}_{\{s<t_2\}}\right]=\E^{P_1\otimes_{t_2} P^{\cdot}}\left[\theta(s){\bf1}_A\right],
	\end{align*}
	where we have utilized the martingale property of $\theta(t)-\theta(t_2-\wedge t)$ for $t\in[t_1,t_3]$ in the second equality and the martingale property of $\theta(t_2- \wedge t)$ for $t\in[t_1,t_3]$ in the penultimate equality. The proof is thus complete.
\end{proof}
\subsection{Equivalent formulation of \defref{relaxed_control_pathwise}}
Thanks to the martingale measure driven SDE representation, we have the following equivalent chracterization for $\mathsf{R}(\omega^1)$.
\begin{lemma}\label{W_equivalence}
Let $\omega^1\in\Omega^1$ be fixed. A probability measure $P^{\omega^1}$ belongs to $\mathsf{R}(\omega^1)$ iff there exists $R^{\omega^1}\in \Pc_2(\D\times\mathcal{Q})$ with $R^{\omega^1}=P^{\omega^1}\circ (X,\Lambda)^{-1}$ such that {\rm (i)} $R^{\omega^1}\circ X_0^{-1}=\lambda$; {\rm (ii)} for any test function $\phi\in C_b^2(\R^n)$, the process $\{{\tt N}^{\omega^1,R^{\omega^1}}\phi(t);~t\in[0,T]\}$ defined by \eqref{eq:Nw1Rw1}
is a $(R^{\omega^1},\Fb^X\otimes\Fb^{\mathcal{Q}})$-martingale. 
\end{lemma}

\vspace{0.4cm}
{\noindent\textbf{Acknowledgements.} L. Bo and J. Wang are supported by  NNSF of China (No.12471451), Natural Science Basic Research Program of Shaanxi (No.2023-JC-JQ-05) and Shaanxi Fundamental Science Research Project for Mathematics and Physics (No.23JSZ010). X. Wei is supported by NNSF of China (No.12201343). X. Yu is supported by the Hong Kong RGC General Research Fund (GRF) (No.15306523 and No.15214125).}

\ \\

\end{document}